\documentclass[a4paper,10pt]{article}

\usepackage[hidelinks]{hyperref}

\usepackage{a4wide}
\usepackage{amsmath,amssymb,amsthm,xspace,float}
\usepackage{amsfonts}
\usepackage{color}
\usepackage{amsmath, amssymb, amsthm}
\usepackage{tikz}
\usepackage{mathtools}
\usepackage{bm}
\usepackage{url}

\usepackage{enumitem}


\newtheorem{thm}{Theorem}
\newtheorem{definition}[thm]{Definition}
\newtheorem{lemma}[thm]{Lemma}
\newtheorem{proposition}[thm]{Proposition}

\newtheorem{con}[thm]{Conjecture}

\theoremstyle{definition}
\newtheorem{remark}[thm]{Remark}

\newcommand{\callanpat}{1\text{-}{23}\text{-}4}
\newcommand{\pat}{1\text{-}34\text{-}2}

\title{Enumerating five families of pattern-avoiding inversion sequences; and introducing the powered Catalan numbers}
\author{Nicholas R. Beaton, Mathilde Bouvel, Veronica Guerrini, and Simone Rinaldi}
\date{\today}

\begin{document}

\maketitle

\begin{abstract} 
The first problem addressed by this article is the enumeration of some families of pattern-avoiding inversion sequences. 
We solve some enumerative conjectures left open by the foundational work on the topics by Corteel et al., some of these being also solved independently by Lin, and Kim and Lin. 
The strength of our approach is its robustness: 
we enumerate four families $F_1 \subset F_2 \subset F_3 \subset F_4$ of pattern-avoiding inversion sequences ordered by inclusion using the same approach. 
More precisely, we provide a generating tree (with associated succession rule) for each family $F_i$ which generalizes the one for the family $F_{i-1}$. 

The second topic of the paper is the enumeration of a fifth family $F_5$ of pattern-avoiding inversion sequences (containing $F_4$). 
This enumeration is also solved \emph{via} a succession rule, which however does not generalize the one for $F_4$. 
The associated enumeration sequence, which we call the \emph{powered Catalan numbers}, is quite intriguing, and further investigated. 
We provide two different succession rules for it, denoted $\Omega_{pCat}$ and $\Omega_{steady}$, 
and show that they define two types of families enumerated by powered Catalan numbers. 
Among such families, we introduce the \emph{steady paths}, which are naturally associated with $\Omega_{steady}$. 
They allow us to bridge the gap between the two types of families enumerated by powered Catalan numbers: 
indeed, we provide a size-preserving bijection between steady paths and valley-marked Dyck paths (which are naturally associated with $\Omega_{pCat}$). 

Along the way, we provide several nice connections to families of permutations defined by the avoidance of vincular patterns, and some enumerative conjectures. 
\end{abstract}

\section{Introduction and preliminaries}

\subsection{Context of our work}
An {\em inversion sequence} of length $n$ is any integer sequence $(e_1, \ldots, e_n)$ satisfying $0\leq e_i <i$, for all $i=1, \ldots ,n$. 
There is a well-known bijection $\mathtt{T}: S_n \to I_n$ between the set $S_n$ of all permutations of length (or size) $n$ and the set $I_n$ of all inversion sequences of length $n$, which maps a permutation $\pi\in \mathcal{S}_n$ into its {\em left inversion table} $(t_1,\ldots,t_n)$, where $t_i=|\{j:j>i\mbox{ and }\pi_i>\pi_j\}|$. 
This bijection is actually at the origin of the name inversion sequences.  

The study of pattern-containment or pattern-avoidance in inversion sequences was first introduced in~\cite{mansour}, and then further investigated in~\cite{corteel}. 
Namely, in \cite{mansour}, Mansour and Shattuck studied inversion sequences that avoid permutations of length $3$, 
while in~\cite{corteel}, Corteel et al. proposed the study of inversion sequences avoiding subwords of length $3$. 
The definition of inversion sequences avoiding words (which may in addition be permutations) is straightforward: 
for instance, the inversion sequences that avoid the word $110$ (resp. the permutation $132$) are those with no $i<j<k$ such that $e_i=e_j>e_k$ (resp. $e_i<e_k<e_j$). 
Pattern-avoidance on special families of inversion sequences has also been studied in the literature, 
namely by Duncan and Steingr\'imsson on {\em ascent sequences} -- see~\cite{duncan}. 
 
The pattern-avoiding inversion sequences of~\cite{corteel} were further generalized in \cite{savage}, 
extending the notion of pattern-avoidance to triples of binary relations $(\rho_1, \rho_2,\rho_3)$. 
More precisely, they denote by $\mathbf{I}_n(\rho_1, \rho_2,\rho_3)$ the set of all inversion sequences in $I_n$ having no three indices $i<j<k$ such that $e_i \rho_1 e_j$, $e_j \rho_2 e_k$, and $e_i \rho_3 e_k$, 
and by $\mathbf{I}(\rho_1, \rho_2,\rho_3)= \cup_n \mathbf{I}_n(\rho_1, \rho_2,\rho_3)$. 
For example, the sets $\mathbf{I}_n(=,>,>)$ and $\mathbf{I}_n(110)$ coincide for every $n$. 
In \cite{savage} all triples of relations in $\{ <,>,\leq, \geq, =, \neq, - \}^3$ are considered, where ``$-$'' stands for any possible relation on a set $S$, {\em i.e.} $x-y$ for any $(x,y) \in S \times S$. 
Therefore, all the $343$ possible triples of relations are examined and the resulting families of pattern-avoiding inversion sequences are subdivided into $98$ equivalence classes. 
Many enumeration results complementing those in~\cite{corteel,mansour} have been found in~\cite{savage}. 
In addition, several conjectures have been formulated in~\cite{savage}. 
Some (but by far not all!) of them have been proved between the moment a first version of~\cite{savage} was posted on the arXiv and its publication, 
and references to these recent proofs can also be found in the published version of~\cite{savage}. 

In this paper we study five families of inversion sequences which form a hierarchy for the inclusion order. 
The enumeration of these classes -- by well-known sequences, such as those of the Catalan, the Baxter, and the newly introduced semi-Baxter numbers~\cite{semibaxterlong} -- 
was originally conjectured in the first version of~\cite{savage}. 
These conjectures have attracted the attention of a fair number of combinatorialists, 
resulting in proofs for all of them, independently of our paper. 
Still, our work reproves these enumeration results. 
Along the way, we further try to establish bijective correspondences between these families of inversion sequences and other known combinatorial structures. 
The most remarkable feature of our work is that all the families of inversion sequences are presented and studied in a unified way by means of {\em generating trees}. 
Before proceeding, let us briefly recall some basics about generating trees. Details can be found for instance in~\cite{GFGT,Eco,BM,West_gt}. 

\subsection{Basics of generating trees}

Consider a combinatorial class $\mathcal{C}$, that is to say a set of discrete objects equipped with a notion of size such that the number of objects of size $n$ is finite, for any $n$. We assume also that $\mathcal{C}$ contains exactly one object of size $1$. 
A \emph{generating tree} for $\mathcal{C}$ is an infinite rooted tree whose vertices are the objects of $\mathcal{C}$ each appearing exactly once in the tree, 
and such that objects of size $n$ are at level $n$ (with the convention that the root is at level $1$). 
The children of some object $c \in \mathcal{C}$ are obtained by adding an \emph{atom} (\emph{i.e.}~a piece of the object that makes its size increase by $1$) to $c$. 
Since every object appears only once in the generating tree, not all possible additions are acceptable. 
We enforce the unique appearance property by considering only additions that follow some prescribed rules and call the \emph{growth} of $\mathcal{C}$ the process of adding atoms according to these rules. 

To illustrate these definitions, we describe the classical growth for the family of Dyck paths, as given by~\cite{Eco}. 
Recall that a Dyck path of semi-length $n$ is a lattice path using up $U=(1,1)$ and down $D=(1,-1)$ unit steps, running from $(0,0)$  to $(2n,0)$ and remaining weakly above the $x$-axis.  
The atoms we consider are $UD$ factors, a.k.a. \emph{peaks}, which are added to a given Dyck path. 
To ensure that all Dyck paths appear exactly once in the generating tree, a peak is inserted only in a point of the \emph{last descent}, defined as the longest suffix containing only $D$ letters. 
More precisely, the children of the Dyck path $w\,UD^k$ are
$w\, U\bm{UD}D^k$, $w\, UD\bm{UD}D^{k-1}$,\dots, $w\, UD^{k-1}\bm{UD}D$, $w\,UD^k\bm{UD}$.

The first few levels of the generating tree for Dyck paths are shown in Figure~\ref{fig:gen_tree} (left). 

\begin{figure}[ht]
\begin{center}
\includegraphics[scale=0.4]{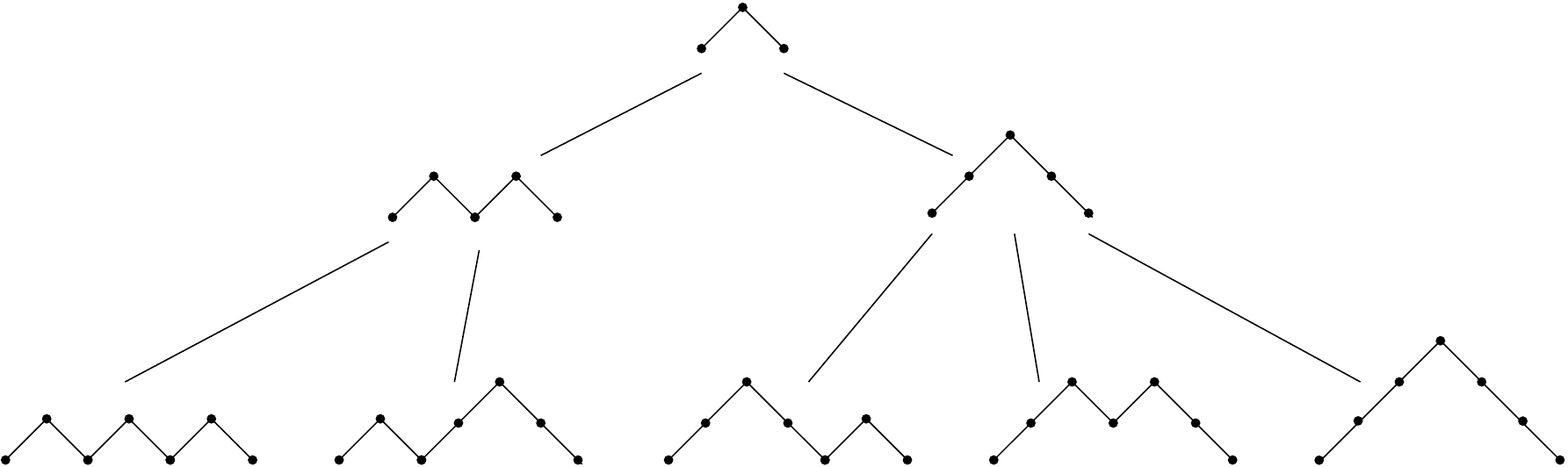} \qquad \includegraphics[scale=0.45]{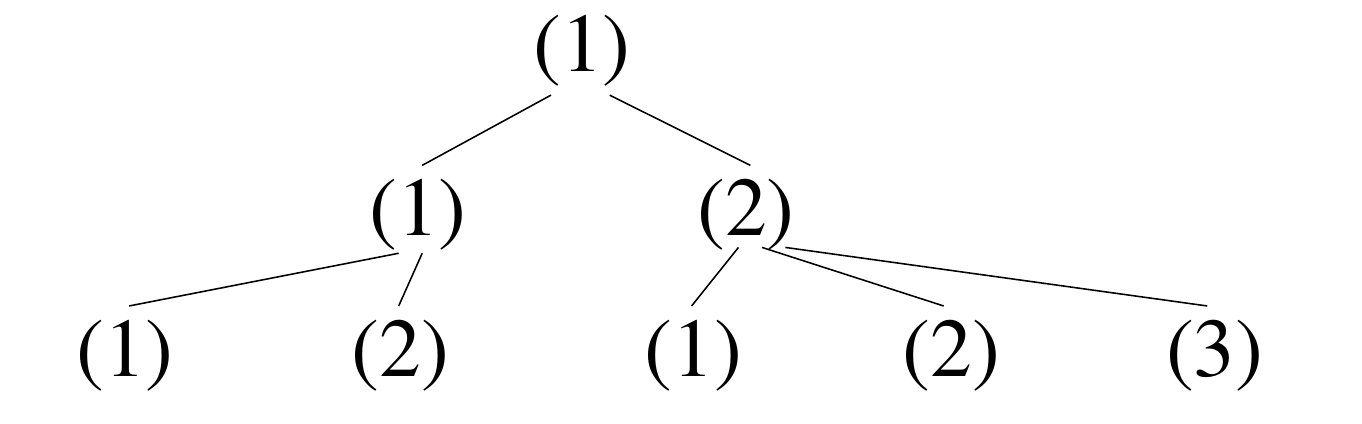}
\end{center}\vspace{-3mm}
\caption{Two ways of looking at the generating tree for Dyck paths: with objects (left) and with labels from the succession rule $\Omega_{Cat}$ (right).\label{fig:gen_tree}}
\end{figure}

When the growth of $\mathcal{C}$ is particularly regular, we encapsulate it in a {\em succession rule}. 
This applies more precisely when there exist statistics whose evaluations control the number of objects produced in the generating tree.
A succession rule consists of one starting label (\emph{axiom}) corresponding to the value of the statistics on the root object 
and of a \emph{set of productions} encoding the way in which these evaluations spread in the generating tree -- see Figure~\ref{fig:gen_tree}(right). 
The growth of Dyck paths presented earlier is governed by the statistic ``length of the last descent'', 
so that it corresponds to the following succession rule, where each label $(k)$ indicates the number of $D$ steps of the last descent in a Dyck path,
\[\Omega_{Cat}=\left\{\begin{array}{ll}
(1)\\
\\
(k) &\rightsquigarrow (1),(2), \dots , (k),(k+1). \end{array}\right. \label{page:OmegaCat}\]

Obviously, as we discuss in~\cite{paper1}, the sequence enumerating the class $\mathcal{C}$ can be recovered from the succession rule itself, 
without reference to the specifics of the objects in $\mathcal{C}$: 
indeed, the $n$th term of the sequence is the total number of labels (counted with repetition) that are produced from the root by $n-1$ applications of the set of productions, 
or equivalently, the number of nodes at level $n$ in the generating tree. 
For instance, the well-know fact that Dyck paths are counted by Catalan numbers (sequence A000108 in \cite{OEIS}) can be recovered by counting nodes at each level $n$ in the above generating tree. 

\subsection{Content of the paper}

In our study we focus on five different families of pattern-avoiding inversion sequences,  
which are depicted in Figure~\ref{fig:hierarchy}. 
As the figure shows, these families are naturally ordered by inclusion, 
and are enumerated by well-known number sequences.

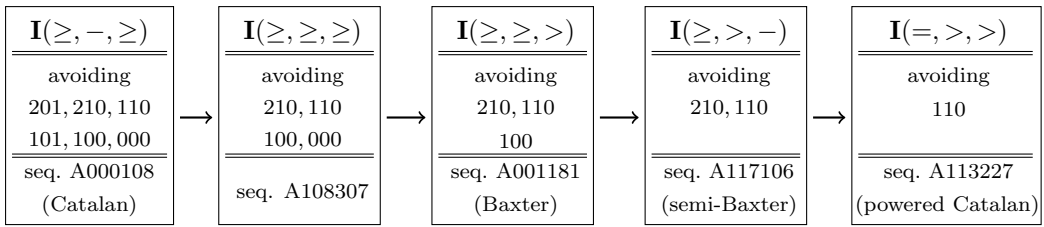
\begin{figure}[ht!]
\centering
\begin{tikzpicture}[scale=.85]
\node at (0.1,-.7) {\footnotesize avoiding};
\node at (0.1,-1.7) {\footnotesize $101, 100,000$};
\node at (0.1,-1.2) {\footnotesize $201,210, 110$};
\draw (-1.2,-3) rectangle (1.4,.4);
\node at (0.1,0) {$\mathbf{I}(\geq, -, \geq)$ };
\draw (-1.1,-0.3) --(1.3,-0.3);
\draw (-1.1,-0.35) --(1.3,-0.35);
\draw (-1.1,-1.9) --(1.3,-1.9);
\draw (-1.1,-1.95) --(1.3,-1.95);
\node at (0.1,-2.2) {\footnotesize  seq.~A000108};
\node at (0.1,-2.7) {\footnotesize  (Catalan)};
\draw[thick,->] (1.5,-1.33)--(2,-1.33);
\node at (3.4,0) {$\mathbf{I}(\geq, \geq, \geq)$ };
\draw (2.1,-3) rectangle (4.6,.4);
\node at (3.4,-.7) {\footnotesize avoiding};
\node at (3.4,-1.7) {\footnotesize $100, 000$};
\node at (3.4,-1.2) {\footnotesize $210, 110$};
\draw (2.2,-0.3) --(4.5,-0.3);
\draw (2.2,-0.35) --(4.5,-0.35);
\draw (2.2,-1.9) --(4.5,-1.9);
\draw (2.2,-1.95) --(4.5,-1.95);
\node at (3.4,-2.5) {\footnotesize seq. A108307};
\draw[thick,->] (4.7,-1.33)--(5.3,-1.33);
\node at (6.7,0) {$\mathbf{I}(\geq, \geq,>)$ };
\draw (5.4,-3) rectangle (7.9,.4);
\node at (6.7,-.7) {\footnotesize avoiding};
\node at (6.7,-1.7) {\footnotesize $100$};
\node at (6.7,-1.2) {\footnotesize $210, 110$};
\draw (5.5,-0.3) --(7.8,-0.3);
\draw (5.5,-0.35) --(7.8,-0.35);
\draw (5.5,-1.9) --(7.8,-1.9);
\draw (5.5,-1.95) --(7.8,-1.95);
\node at (6.7,-2.2) {\footnotesize  seq.~A001181};
\node at (6.7,-2.7) {\footnotesize  (Baxter)};
\draw[thick,->] (8,-1.33)--(8.6,-1.33);
\node at (10,0) {$\mathbf{I}(\geq, >, -)$ };
\draw (8.7,-3) rectangle (11.2,.4);
\node at (10,-.7) {\footnotesize avoiding};
\node at (10,-1.2) {\footnotesize $210, 110$};
\draw (8.8,-0.3) --(11.1,-0.3);
\draw (8.8,-0.35) --(11.1,-0.35);
\draw (8.8,-1.9) --(11.1,-1.9);
\draw (8.8,-1.95) --(11.1,-1.95);
\node at (10,-2.2) {\footnotesize  seq.~A117106};
\node at (10,-2.7) {\footnotesize  (semi-Baxter)};
\draw[thick,->] (11.3,-1.33)--(11.8,-1.33);
\node at (13.4,0) {$\mathbf{I}(=, >, >)$ };
\node at (13.4,-.7) {\footnotesize avoiding};
\node at (13.4,-1.2) {\footnotesize $110$};
\draw (11.9,-3) rectangle (14.83,.4);
\draw (12,-0.3) --(14.7,-0.3);
\draw (12,-0.35) --(14.7,-0.35);
\draw (12,-1.9) --(14.7,-1.9);
\draw (12,-1.95) --(14.7,-1.95);
\node at (13.4,-2.2) {\footnotesize  seq.~A113227};
\node at (13.38,-2.7) {\footnotesize  (powered Catalan)};
\end{tikzpicture}
\caption{A chain of families of inversion sequences ordered by inclusion, with their characterization in terms of pattern avoidance, and their enumerative sequence.\label{fig:hierarchy}}
\end{figure}

The objective of our study is twofold. 
On the one hand we provide (and/or collect) enumerative results about the families of inversion sequences of Figure~\ref{fig:hierarchy}. 
On the other hand we aim at treating all these families in a unified way. 
More precisely, in each of the following sections we first provide a simple combinatorial characterization for the corresponding family of inversion sequences, and then we show a recursive growth that yields a succession rule. 

The main noticeable property of the succession rules provided in Sections~\ref{sec:catinvseq}, \ref{sec:invseq3cross}, \ref{sec:invseq3rd}, and~\ref{sec:invseq4th} 
is that they reveal the hierarchy of Figure~\ref{fig:hierarchy} at the abstract level of succession rules. 
Specifically, the recursive construction (or growth) provided for each family is obtained by extending the construction of the immediately smaller family.
Moreover, the ways in which these growths are encoded by labels in succession rules are also each a natural extension of the case of the immediately smaller family. 
Hence, these examples provide another illustration of the idea of generalizing/specializing succession rules that we discussed in details in~\cite[Section 2.2]{slicings}. 
The outcome of the discussion in~\cite[Section 2.2]{slicings} is the following proposed definition for generalization/specialization of succession rules. 
To say that a succession rule $\Omega_{\mathcal{B}}$ specializes $\Omega_{\mathcal{A}}$ (equivalently, that $\Omega_{\mathcal{A}}$ generalizes or extends $\Omega_{\mathcal{B}}$), 
we require
\begin{itemize}
\item[(1)] the existence of a comparison relation ``smaller than or equal to'' 
between the labels of $\Omega_{\mathcal{B}}$ and those of $\Omega_{\mathcal{A}}$, 
and,
\item[(2)] for any labels $\ell_A$ of $\Omega_{\mathcal{A}}$ and $\ell_B$ of $\Omega_{\mathcal{B}}$ with $\ell_B$ smaller than or equal to $\ell_A$, 
a way of mapping the productions of the label $\ell_B$ in $\Omega_{\mathcal{B}}$ to a subset of the productions of the label $\ell_A$ in $\Omega_{\mathcal{A}}$, 
such that a label is always mapped to a larger or equal one.
\end{itemize}
Comparing Propositions~\ref{prop:catinvseq}, \ref{prop:3crosinvseq}, \ref{prop:baxinvseq} and \ref{prop:finalinvseq}, 
and mapping the labels in the obvious way, it is easy to see that the succession rules in these propositions 
satisfy this proposed definition (the comparison relation being here just the componentwise natural order on integers). 

\medskip

We conclude our introduction with a few words commenting on the classes of our hierarchy and our results on them.

\begin{itemize}
\item[i)] We start in Section~\ref{sec:catinvseq} with $\mathbf{I}(\geq, -, \geq)$, which we call the family of Catalan inversion sequences. 
We define two recursive growths for this family, one according to $\Omega_{Cat}$ (hence proving that $\mathbf{I}(\geq, -, \geq)$ is enumerated by the Catalan numbers) 
and a second one that turns out to be a new succession rule for the Catalan numbers. 
The fact that this family of inversion sequences is enumerated by the Catalan numbers was conjectured in~\cite{savage} and it has recently been proved independently of us by Kim and Lin in~\cite{KL}. 
Moreover, we are able to relate the family of Catalan inversion sequences to a family of permutations defined by the avoidance of {\em vincular patterns}, 
proving that they are in bijection with a family of pattern-avoiding permutations.
\item[ii)]  In Section~\ref{sec:invseq3cross} we consider the family $\mathbf{I}(\geq,\geq,\geq)$. 
This class has been considered independently of us by Lin in the article~\cite{LinKernel}, 
which proves the conjecture (originally formulated in~\cite{savage}) that these inversion sequences are counted by sequence A108307 on~\cite{OEIS} -- 
defining the enumerative sequence of set partitions of $\{1,\ldots, n\}$ that avoid enhanced 3-crossings~\cite{MBM_Xin}. 
We review Lin's proof, which fits perfectly in the hierarchy of succession rules that we present. 
\item[iii)] In Section~\ref{sec:invseq3rd} we study inversion sequences in $\mathbf{I}(\geq,\geq,>)$, which we call Baxter inversion sequences. 
This family of inversion sequences was originally conjectured in~\cite{savage} to be counted by Baxter numbers. 
The proof of this conjecture was provided in~\cite{KL} by means of a growth for Baxter inversion sequences that neatly generalizes the previous growth for the family $\mathbf{I}(\geq,\geq,\geq)$.
\item[iv)] In Section~\ref{sec:invseq4th}, we deal with the family $\mathbf{I}(\geq,>,-)$, which we call semi-Baxter inversion sequences. 
Indeed, this family of inversion sequences was originally conjectured in~\cite{savage} to be counted by the sequence A117106~\cite{OEIS}; 
these numbers have been thoroughly studied and named semi-Baxter in the article~\cite{semibaxterlong}, 
which among other results proves this conjecture of~\cite{savage}. 
\item[v)] Finally, in Section~\ref{sec:callaninvseq} we deal with $\mathbf{I}(=,>,>)$, which is the rightmost element of the chain of Figure~\ref{fig:hierarchy}. 
We call the elements of $\mathbf{I}(=,>,>)$ {\em powered Catalan inversion sequences}, 
since the succession rule we provide for them is a ``powered version'' of the classical Catalan succession rule. 
\end{itemize}
When turning to powered Catalan inversion sequences, the hierarchy of Figure~\ref{fig:hierarchy} is broken at the level of succession rules. 
Indeed, although the combinatorial characterization of these objects generalizes naturally that of semi-Baxter inversion sequences, 
we do not have a growth for powered Catalan inversion sequences that generalizes the one of semi-Baxter inversion sequences. 
This motivates the second part of the paper, devoted to the study of this ``powered Catalan'' enumerative sequence from Section~\ref{sec:callaninvseq} on. 
\smallskip

The enumeration of powered Catalan inversion sequences (by A113227,~\cite{OEIS}) was already solved in~\cite{corteel}. 
Our first contribution (in Section~\ref{sec:callaninvseq}) is to prove that they grow according to the succession rule $\Omega_{pCat}$, 
which generalizes the classical rule $\Omega_{Cat}$ by introducing powers in it. 
This motivates the name \emph{powered Catalan numbers} which we have coined for the numbers of sequence A113227. 

Many combinatorial families are enumerated by powered Catalan numbers. Some are presented in Section~\ref{sec:callan}.
These families somehow fall into two categories. Inside each category, the objects seem to be in rather natural bijective correspondence. 
However, between the two categories, the bijections are much less clear. 
Our result of Section~\ref{sec:callan} is to provide a second succession rule for powered Catalan numbers (more precisely, for permutations avoiding the vincular pattern $\callanpat$), 
which should govern the growth of objects in one of these two categories, the other category being naturally associated with the rule $\Omega_{pCat}$. 

In Section~\ref{sec:steadysection}, we describe a new occurrence of the powered Catalan numbers in terms of lattice paths. 
More precisely, we introduce the family of \emph{steady paths} and prove that they are enumerated by the powered Catalan numbers. 
This is proved by showing a growth for steady paths that is encoded by (a variant called $\Omega_{steady}$ of) the succession rule for permutations avoiding the pattern $\callanpat$. 
We also provide a simple bijection between steady paths and permutations avoiding the vincular pattern $\pat$, 
therefore recovering the enumeration of this family, already known~\cite{larabaxter} to be enumerated by A113227.

Finally, in Section~\ref{sec:bijection} we bridge the gap between the two types of powered Catalan structures, 
by showing a bijection between steady paths (representing the succession rule $\Omega_{steady}$) 
and valley-marked Dyck paths (emblematic of the succession rule $\Omega_{pCat}$). 

\section{Catalan inversion sequences: $\mathbf{I}(\geq,-,\geq)$}\label{sec:catinvseq}

The first family of inversion sequences considered is $\mathbf{I}(\geq,-,\geq)$. It was originally conjectured in~\cite{savage} to be counted by the sequence of Catalan numbers~\cite[A000108]{OEIS} 
(hence the name {\em Catalan inversion sequences}) whose first terms we recall:
\[1,1,2,5,14,42,132, 429, 1430, 4862, 16796, 58786, 208012, 742900,\ldots\]

We note that this conjectured enumeration has recently been proved independently from us by Kim and Lin in~\cite{KL}. 
Their proof does not involve generating trees, but displays a nice Catalan recurrence for 
the filtration $\mathbf{I}_{n,k}(\geq,-,\geq)$ of $\mathbf{I}_{n}(\geq,-,\geq)$ where the additional parameter $k$ is the value of the last element of an inversion sequence. 

We provide another proof of this conjecture in Proposition~\ref{prop:catalaninvseq} by showing that there exists a growth for $\mathbf{I}(\geq,-,\geq)$ 
according to the well-known Catalan succession rule $\Omega_{Cat}$. 
Moreover, we show a second growth for $\mathbf{I}(\geq,-,\geq)$, thereby providing a new Catalan succession rule, 
which is appropriate to be generalized in the next sections. 
In addition, we show a direct bijection between $\mathbf{I}(\geq,-,\geq)$ and a family of pattern-avoiding permutations, which thus results to be enumerated by Catalan numbers. 

\subsection{Combinatorial characterization}

Let us start by observing that the family of Catalan inversion sequences has a simple characterization in terms of inversion sequences avoiding patterns of length three.
\begin{proposition}\label{prop:catwords}
An inversion sequence is in $\mathbf{I}(\geq,-,\geq)$ if and only if it avoids $000$, $100$, $101$, $110$, $201$ and $210$.\end{proposition}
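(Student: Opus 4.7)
The plan is to prove the proposition by a direct case analysis, unpacking the definition of $\mathbf{I}(\geq,-,\geq)$ and matching each possible configuration with exactly one of the six listed word patterns. By definition, $(e_1,\dots,e_n) \in \mathbf{I}(\geq,-,\geq)$ means there is no triple $i<j<k$ with $e_i \geq e_j$ and $e_i \geq e_k$ (the middle relation being unrestricted). So the task reduces to showing that the set of triples $(e_i,e_j,e_k)$ with $e_i \geq e_j$ and $e_i \geq e_k$ is partitioned exactly into the six reduction patterns $000, 100, 101, 110, 201, 210$.

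First I would split the condition $e_i \geq e_j$ into the subcases $e_i = e_j$ and $e_i > e_j$, and similarly $e_i \geq e_k$ into $e_i = e_k$ and $e_i > e_k$, yielding four main cases. In the cases involving an equality with $e_i$, the third comparison (between $e_j$ and $e_k$) is forced: $e_i = e_j$ with $e_i = e_k$ gives $e_i = e_j = e_k$, i.e.\ pattern $000$; $e_i = e_j$ with $e_i > e_k$ gives $e_i = e_j > e_k$, pattern $110$; and $e_i > e_j$ with $e_i = e_k$ gives $e_j < e_k = e_i$, pattern $101$. In the remaining case $e_i > e_j$ and $e_i > e_k$, I would further split according to the relation between $e_j$ and $e_k$: equality gives pattern $100$, $e_j < e_k$ gives $e_j < e_k < e_i$ which is pattern $201$, and $e_j > e_k$ gives $e_i > e_j > e_k$, which is pattern $210$.

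This exhausts all possibilities, and each produced word pattern matches exactly one in the forbidden list. Conversely, inspection of the six forbidden patterns shows immediately that in each of them the first letter is weakly greater than the second and weakly greater than the third, so every occurrence of one of these patterns yields a triple $i<j<k$ with $e_i \geq e_j$ and $e_i \geq e_k$. Since the correspondence ``triple violating $(\geq,-,\geq)$ $\leftrightarrow$ occurrence of one of the six words'' is bijective at the level of triples, avoidance on one side is equivalent to avoidance on the other, completing the proof.

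There is no real obstacle here beyond bookkeeping: the only care needed is to ensure the case split is exhaustive (four subcases from the two inequalities, with the last subcase branching into three according to the $e_j$ vs.\ $e_k$ comparison) and that no pattern has been double-counted or omitted.
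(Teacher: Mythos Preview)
Your proof is correct and follows essentially the same approach as the paper: the paper's own proof is a one-line remark that containing a triple $e_i,e_j,e_k$ with $e_i\geq e_j$ and $e_i\geq e_k$ is equivalent to containing one of the listed patterns, and your case analysis simply spells out the details of this equivalence.
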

\begin{proof}
The proof is rather straightforward, since containing $e_i$, $e_j$, $e_k$ such that $e_i\geq e_j,e_k$, with $i<j<k$, is equivalent to containing the listed patterns.
\end{proof}
In addition to the above characterization, we introduce the following combinatorial description of Catalan inversion sequences, as it will be useful to define a growth according to the Catalan succession rule $\Omega_{Cat}$.
\begin{proposition}\label{rem:invseq}
Any inversion sequence $e=(e_1,\ldots,e_n)$ is a Catalan inversion sequence if and only if for any $i$, with $1\leq i< n$, 
\begin{center}if $e_i$ forms a \emph{weak descent}, \emph{i.e.} $e_i\geq e_{i+1}$, then $e_i<e_j$, for all $j>i+1$.\end{center}
\end{proposition}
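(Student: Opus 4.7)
My plan is to prove the equivalence by direct reformulation of the pattern condition, using Proposition~\ref{prop:catwords} (or equivalently the raw definition of $\mathbf{I}(\geq,-,\geq)$) and being careful that the hypothesis in Proposition~\ref{rem:invseq} only talks about \emph{consecutive} weak descents $e_i\geq e_{i+1}$, whereas the pattern $(\geq,-,\geq)$ involves three general indices $i<j<k$.

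The forward direction is essentially immediate. If $e\in\mathbf{I}(\geq,-,\geq)$ and $e_i\geq e_{i+1}$, then for any $j>i+1$, the triple of positions $i<i+1<j$ would realize the forbidden pattern $(\geq,-,\geq)$ as soon as $e_i\geq e_j$; hence $e_i<e_j$, as required.

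The backward direction is where the little bit of work lies. Assume the consecutive-descent condition holds, and suppose, for a contradiction, that there exist indices $i<j<k$ witnessing an occurrence of the pattern, i.e.\ $e_i\geq e_j$ and $e_i\geq e_k$. The strategy is to massage this occurrence into a \emph{consecutive} weak descent $e_m\geq e_{m+1}$ together with an element $e_k$ to its right satisfying $e_m\geq e_k$; this will directly contradict the assumed condition. To do this I would let $m$ be the largest index in $\{i,i+1,\dots,j-1\}$ for which $e_m\geq e_i$; such an $m$ exists because $m=i$ qualifies. Then $e_m\geq e_i$, and I claim $e_m\geq e_{m+1}$: if $m<j-1$ this follows from the maximality of $m$ (forcing $e_{m+1}<e_i\leq e_m$), and if $m=j-1$ it follows from $e_{m+1}=e_j\leq e_i\leq e_m$. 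In either case, position $m$ is a weak descent and $m+1\leq j<k$, so the condition requires $e_m<e_k$; but $e_m\geq e_i\geq e_k$, a contradiction.

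The only subtle point I anticipate is handling the boundary case $m=j-1$ separately (where the maximality argument degenerates), but this case is immediately settled by the hypothesis $e_i\geq e_j$. Once both directions are written out, the proposition follows. I would keep the whole argument to a short paragraph, since the combinatorial content is really just the observation that any occurrence of the pattern $(\geq,-,\geq)$ can be tightened to one in which the first two positions are consecutive.
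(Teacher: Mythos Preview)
Your proof is correct and follows essentially the same route as the paper's own argument: the forward direction is immediate, and for the backward direction the paper also argues by contrapositive, locating an index $i'$ with $i\le i'<j$ at which a consecutive weak descent occurs and for which $e_{i'}\ge e_k$. Your choice of $m$ as the largest index in $\{i,\dots,j-1\}$ with $e_m\ge e_i$ is simply an explicit construction of the paper's $i'$, and your separate handling of the boundary case $m=j-1$ makes precise what the paper leaves implicit.
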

\begin{proof}
The forward direction is clear. The backwards direction can be proved by contrapositive. More precisely, suppose there are three indices $i<j<k$, such that $e_i\geq e_j,e_k$. Then, if $e_j=e_{i+1}$, $e_i$ forms a weak descent and the fact that $e_i\geq e_k$ concludes the proof. Otherwise, 
since $e_i\geq e_j$, there must be an index $i'$, with $i\leq i'<j$, such that $e_{i'}$ forms a weak descent and $e_{i'}\geq e_k$. This concludes the proof as well.
\end{proof}

The previous statement means that any of our inversion sequences has a neat decomposition:  
they are concatenations of shifts of inversion sequences having a single weak descent, at the end. 
A graphical view of this decomposition is shown in Figure~\ref{fig:cat_decomposition}.

\begin{figure}[ht]
\begin{center}
\includegraphics[width=.35\textwidth]{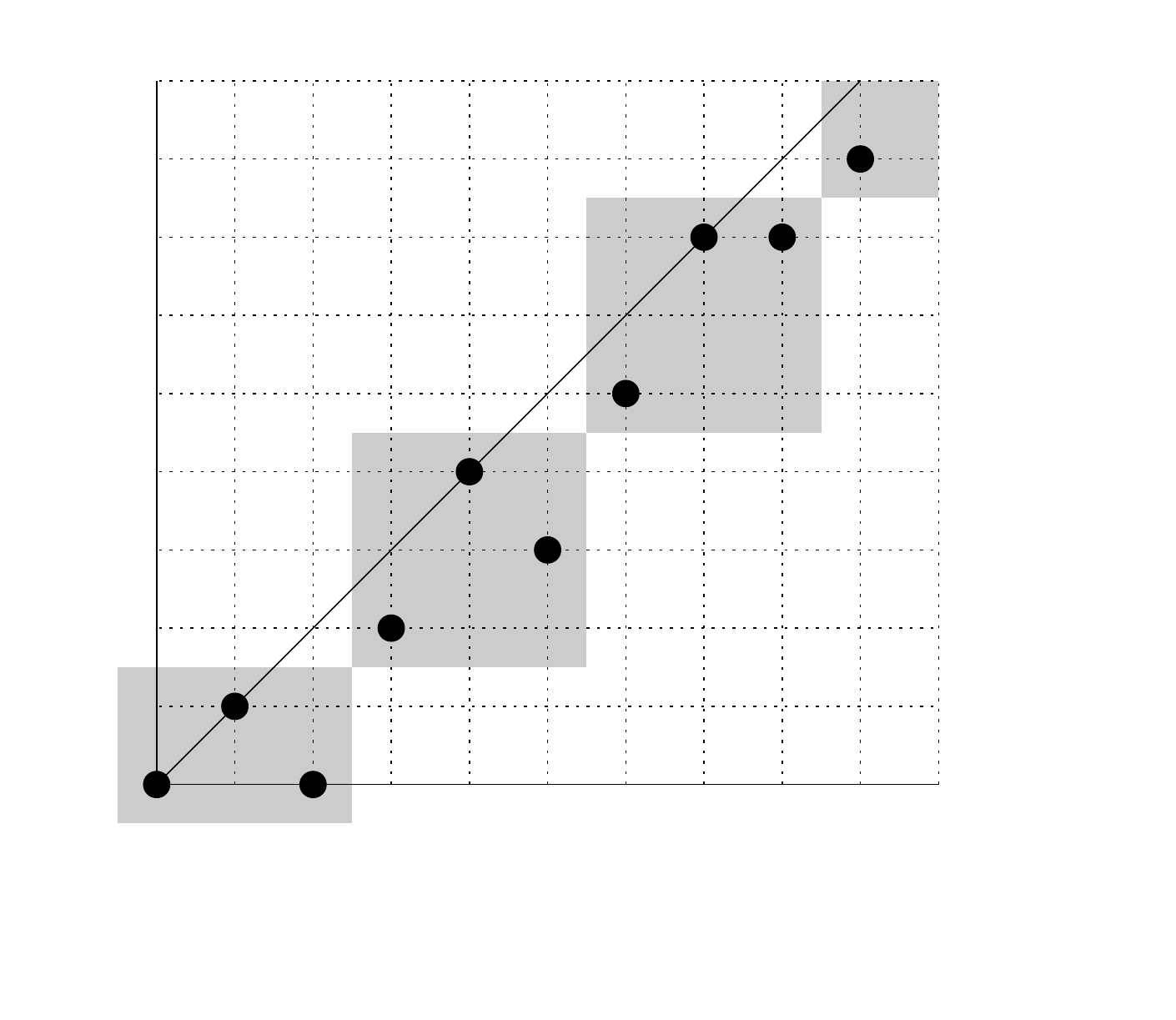}\vspace{-7mm}
\end{center}
\caption{A Catalan inversion sequence and its decomposition.}
\label{fig:cat_decomposition}
\end{figure}

\subsection{Enumerative results}

\begin{proposition}\label{prop:catalaninvseq}
Catalan inversion sequences grow according to  the succession rule $\Omega_{Cat}$,
\[\Omega_{Cat}=\left\{\begin{array}{ll}
(1)\\
\\
(k) &\rightsquigarrow (1),(2), \dots , (k),(k+1). \end{array}\right.\]
\end{proposition}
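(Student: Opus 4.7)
My approach is to exhibit a recursive growth on $\mathbf{I}(\geq, -, \geq)$ together with a label, which together realize the succession rule $\Omega_{Cat}$.

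Guided by the structure of Catalan inversion sequences exposed in Proposition~\ref{rem:invseq}, I would label each $e \in \mathbf{I}(\geq, -, \geq)$ of length $n$ by $\ell(e) = n - M - 1$, where $M$ is the largest value appearing as a weak descent in $e$ (with the convention $M = -1$ if $e$ has no weak descent). Intuitively, $\ell(e) + 1$ counts the admissible ``next values'' $v \in \{M+1, \ldots, n\}$ allowed by the Catalan constraint at the end of the sequence. This gives $\ell((0)) = 1$, matching the axiom of $\Omega_{Cat}$, and one can verify directly on small cases that at each length $n$ the multiset of these labels already matches what $\Omega_{Cat}$ predicts (namely at length $3$: $\{(1),(1),(2),(2),(3)\}$; at length $4$: $\{(1)^5,(2)^5,(3)^3,(4)\}$).

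For the growth itself, naively appending at the end does not realize $\Omega_{Cat}$: two siblings such as $(0, 0, 1)$ and $(0, 0, 2)$ would both inherit label $(2)$, whereas $\Omega_{Cat}$ requires siblings to carry pairwise distinct labels. (This naive growth is instead the one giving the ``second growth'' mentioned in the introduction of Section~\ref{sec:catinvseq}, which produces a different Catalan succession rule.) Instead, I would define the children of $e$ via a construction tied to the atom decomposition illustrated in Figure~\ref{fig:cat_decomposition}: for $e$ with $\ell(e) = k$, the $k+1$ children are parameterized by $j \in \{1, \ldots, k+1\}$ and obtained by an insertion at a position within the final atom of $e$ determined by $j$, arranged so that the $j$-th child has label exactly $(j)$.

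The verification then proceeds in two steps. First, a bijective step: show that every length-$(n+1)$ Catalan inversion sequence arises as a child exactly once, by describing the inverse construction that from any $e' \in \mathbf{I}_{n+1}(\geq, -, \geq)$ recovers its parent $e$ and the index $j$ from the position and value of the entry responsible for the most recent extension. Second, a label-tracking step: a case analysis, distinguishing whether the insertion introduces a new weak descent at position $n$ (thereby altering $M$) or leaves the existing weak descent structure intact, shows that the $j$-th child of a label-$(k)$ node has label exactly $(j)$, as required by $\Omega_{Cat}$. The main obstacle will be pinning down the correct insertion rule so that the labels match on the nose; the naive append fails, so the correct construction must be sensitive to the atomic decomposition of $e$, and verifying the resulting label dynamics is the crux of the argument.
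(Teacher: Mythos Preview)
Your proposal has a genuine gap: you never specify the growth. You correctly observe that appending a new rightmost entry does not realize $\Omega_{Cat}$ with your label $\ell(e)=n-M-1$, and you then assert that the $k+1$ children should be ``obtained by an insertion at a position within the final atom of $e$ determined by $j$, arranged so that the $j$-th child has label exactly $(j)$''. But this is exactly the content of the proof, and it is not given. You yourself say that ``pinning down the correct insertion rule'' is ``the crux of the argument''; the crux is missing. Note also that once you abandon the append growth, there is no a priori reason the fanout at $e$ is still $\ell(e)+1$: that number was the count of admissible appended values, not of children under some other (unspecified) operation.

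It is also worth pointing out that your label does \emph{not} coincide with the one the paper uses, so even a completed version of your plan would be a different argument. For instance, for $e=(0,1,2,1)$ your label is $\ell(e)=1$ (since $M=2$), whereas the paper assigns label $(2)$. The paper's label is $k$ where $k+1$ is the number of \emph{active positions}, namely those indices $i$ such that inserting the maximal value $i-1$ at position $i$ (and shifting the suffix right) produces a Catalan inversion sequence; the inverse deletes the rightmost entry equal to its position minus one. This insertion is not confined to the final atom of $e$ (for $(0,1,2,1)$ the active positions are $1,4,5$), so your intuition that the right construction should be ``sensitive to the atomic decomposition'' and live ``within the final atom'' is misdirected relative to the paper's actual mechanism. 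The paper then checks directly that the child obtained from the $j$-th active position has exactly $j+1$ active positions, giving the production $(k)\rightsquigarrow(1),\ldots,(k+1)$. That concrete verification is what your proposal lacks.
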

\begin{proof}
Given an inversion sequence $e=(e_1,\ldots,e_n)$, we define the inversion
sequence $e\odot i$ as the sequence $ (e_1,\ldots,e_{i-1},i-1,e_i,\ldots,e_n)$, where the entry $i-1$ is inserted in position $i$, for some $1\leq i\leq n+1$, and the entries $e_i,\ldots,e_n$ are shifted rightwards by one. By definition of inversion sequences, $i-1$ is the largest possible value that the $i$th entry can assume. 
And moreover, letting $e':=e\odot i$, it holds that  $e'_j=e_{j-1}<j-1$, for all $j>i$; namely the index $i$ is the rightmost index such that $e'_k=k-1$. For example, if $i=4$ and $e=(0,0,1,3,4,5)$, then $e\odot i=(0,0,1,3,3,4,5)$.

Then, we note that given a Catalan inversion sequence $e$ of length $n$, by removing from $e$ the rightmost entry whose value is equal to its position minus one, we obtain a Catalan inversion sequence of length $n-1$. Note that $e_1=0$ for every Catalan inversion sequence, thus such an entry always exists.

Therefore, we can describe a growth for Catalan inversion sequences by inserting an entry $i-1$ in position $i$.
By Proposition~\ref{rem:invseq}, since the entry $i-1$ forms a weak descent in $e\odot i$, the inversion sequence $e\odot i$ is a Catalan inversion sequence of length $n+1$ if and only if $e_{i+1},\ldots,e_n>i-1$. Then, we call \emph{active positions} all the indices $i$, with $1\leq i\leq n+1$, such that $e\odot i$ is  a Catalan inversion sequence of length $n+1$. According to this definition, $n+1$ and $n$ are always active positions: indeed, both $e\odot (n+1)=(e_1,\ldots,e_n,n)$ and $e\odot n=(e_1,\ldots,n-1,e_n)$ are Catalan inversion sequences of length $n+1$.

We label a Catalan inversion sequence $e$ of length $n$ with $(k)$, where the number of active positions is $k+1$. Note that the smallest inversion sequence has label $(1)$, which is the axiom of rule $\Omega_{Cat}$. 

Now, we show that given a Catalan inversion sequence $e$ of length $n$ with label $(k)$, the labels of $e\odot i$, where $i$ ranges over all the active positions, are precisely the label productions of $(k)$ in $\Omega_{Cat}$. 

Let $i_1,\ldots,i_{k+1}$ be the active positions of $e$  from left to right. Note that $i_k=n$ and $i_{k+1}=n+1$. 
We argue that, for any $1\leq j\leq {k+1}$, the active positions of the inversion sequence $e\odot i_j=(e_1,\ldots,i_j-1,e_{i_j},\ldots,e_n)$ are $i_1,\ldots,i_{j-1}$, $n+1$ and $n+2$. 
Indeed, on the one hand any position which is non-active in $e$ is still non-active in $e\odot i_j$. 
On the other hand, by Proposition~\ref{rem:invseq}, the index $i_j$ becomes non-active in $e\odot i_j$, since $e_{i_j}<i_j$ by definition.
Similarly, any position $i_h$, with $i_j<i_h<n+1$, which is active in $e$ becomes non-active in $e\odot i_j$. 
Thus, the active positions of $e\odot i_j$ are $i_1,\ldots,i_{j-1}$, $n+1$ and $n+2$. Hence, $e\odot i_j$ has label $(j)$, for any $1\leq j\leq k+1$.\end{proof}

Furthermore, we can provide a new succession rule for generating Catalan inversion sequences: the growth we provide in the following is remarkable as it allows generalizations in the next sections.
\begin{proposition}\label{prop:catinvseq}
Catalan inversion sequences grow according to the following succession rule
\[\Omega_{Cat_2}=\left\{\begin{array}{lll}
(1,1)\\
\\
(h,k) &\rightsquigarrow &\hspace{-3mm} (0,k+1)^h,\\
&&\hspace{-3mm} (h+1,k), (h+2,k-1), \dots , (h+k,1). \end{array}\right.\]
\end{proposition}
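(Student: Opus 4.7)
The plan is to give a new recursive growth of Catalan inversion sequences, now obtained by appending a single entry at the end rather than by the insertion used in Proposition~\ref{prop:catalaninvseq}. Reversing this operation, the parent of $e = (e_1, \ldots, e_n)$ with $n \geq 2$ is simply $(e_1, \ldots, e_{n-1})$, which is trivially a Catalan inversion sequence (removing an entry cannot create any forbidden triple); the root is $(0)$. To encode this growth by $\Omega_{Cat_2}$, I would attach to $e$ the label $(h, k)$ defined from
\[
w := \max\{e_i : 1 \leq i \leq n-1,\ e_i \geq e_{i+1}\}
\]
(with the convention $w = -1$ if $e$ has no weak descent) by setting $h := \max(0, e_n - w)$ and $k := n - \max(e_n, w)$. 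In particular, the root $(0)$ has $w = -1$, $e_n = 0$, $n = 1$, hence label $(1, 1)$, the axiom of $\Omega_{Cat_2}$.

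The key technical step is to identify which values $v \in \{0, 1, \ldots, n\}$ can be appended to $e$ while preserving the Catalan property. Leveraging Proposition~\ref{rem:invseq}, I would prove that this holds if and only if $v > w$: the ``if'' direction follows directly, and for the ``only if'' direction, for any $i < n$ with $e_i \geq e_j$ for some $j \in (i, n]$, the maximum of the subsequence $e_{i+1}, \ldots, e_j$ must be attained at some position $i'$ where $e_{i'}$ is a weak descent of $e$ with $e_{i'} \geq e_i$, showing that the constraint $v > e_i$ is subsumed by $v > w$. Consequently there are $n - w = h + k$ valid choices of $v$, which matches the number of productions of $(h, k)$ in $\Omega_{Cat_2}$.

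To finish, I would partition the valid appends into two groups and read off the child's label in each. When $v \in \{w+1, \ldots, e_n\}$ (nonempty precisely when $h \geq 1$, giving exactly $h$ values), position $n$ becomes a weak descent in the child, the new maximum weak descent value is $e_n$, and since $v \leq e_n$ the child's label comes out to $(0, k+1)$ independently of $v$. When $v \in \{\max(e_n, w)+1, \ldots, n\}$ (exactly $k$ values), no new weak descent is created and $w$ is unchanged; writing $v = \max(e_n, w) + j$ for $j = 1, \ldots, k$ and using the identity $\max(e_n, w) - w = h$, the child's label comes out to $(h+j, k+1-j)$. These two families of child labels together form precisely $(0, k+1)^h, (h+1, k), (h+2, k-1), \ldots, (h+k, 1)$, as required by $\Omega_{Cat_2}$. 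The main obstacle I anticipate is the reduction, via Proposition~\ref{rem:invseq}, of the validity condition for $v$ to the single inequality $v > w$; once this is established, the label bookkeeping is routine arithmetic.
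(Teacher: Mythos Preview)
Your proposal is correct and follows essentially the same approach as the paper: grow by appending a rightmost entry, use the maximum weak-descent value $w$ (the paper's $\mathrm{mwd}(e)$) to characterize the valid appends as $v>w$ via Proposition~\ref{rem:invseq}, and split into the two cases $v\le e_n$ versus $v>e_n$ to read off the child labels. Your label $(h,k)=(\max(0,e_n-w),\,n-\max(e_n,w))$ coincides with the paper's $(\max(e)-\mathrm{mwd}(e),\,n-\max(e))$, since for a Catalan inversion sequence one always has $\max(e)=\max(e_n,w)$; the paper makes this explicit by first observing $\max(e)\in\{e_{n-1},e_n\}$, whereas you bypass that observation, but the two formulations are equivalent and the bookkeeping is identical.
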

\begin{proof}
We consider the growth of Catalan inversion sequences that consists of adding a
new rightmost entry, and we prove that this growth defines the succession rule $\Omega_{Cat_2}$.
Obviously, this growth is different from the one provided in the proof of Proposition~\ref{prop:catalaninvseq}. 

Let $\max(e)$ be the maximum value among the entries of $e$. And let $\mbox{mwd}(e)$ be the maximum value of the set of all entries $e_i$ that form a weak descent of $e$; if $e$ has no weak descents, then $\mbox{mwd}(e):=-1$. By Proposition~\ref{prop:catwords}, since $e$ avoids $100$, $201$ and $210$, the value $\max(e)$ is $e_{n-1}$ or $e_n$. In particular, if $\max(e)=e_{n-1}\geq e_n$, then $\mbox{mwd}(e)=\max(e)$.

By Proposition~\ref{rem:invseq}, it follows that $f =(e_1,\ldots,e_n,p)$ is a Catalan inversion sequence of length ${n+1}$ if and only if $\mbox{mwd}(e)<p\leq n$. 
Moreover, if $\mbox{mwd}(e)<p\leq\max(e)$, then $e_n$ forms a new weak descent of $f$, and $\mbox{mwd}(f)$ becomes the value $e_n$; whereas, if $\max(e)<p\leq n$, then $\mbox{mwd}(f)=\mbox{mwd}(e)$ since the weak descents of $f$ and $e$ coincide.

Now, we assign to any Catalan inversion sequence  $e$ of length $n$ the label $(h,k)$, where $h=\max(e)-\mbox{mwd}(e)$ and $k=n-\max(e)$.
In other words, $h$ (resp. $k$) marks the number of possible additions smaller than or equal to (resp. greater than) the maximum entry of $e$.

The sequence $e=(0)$ has no weak descents, thus it has label $(1,1)$, which is the axiom of $\Omega_{Cat_2}$.
Let $e$ be a Catalan inversion sequence of length $n$ with label $(h,k)$.
As Figure~\ref{fig:cat_growth} illustrates, the labels of the inversion sequences of length $n+1$ produced by adding a rightmost entry $p$ to $e$ are\begin{itemize}
\item $(0,k+1)$, for any $p\in\{\mbox{mwd}(e)+1,\ldots,\max(e)\}$,
\item $(h + 1,k),(h + 2,k-1),\ldots,(h + k,1)$, when $p=\max(e)+1,\ldots, n$,
\end{itemize}
which concludes the proof that Catalan inversion sequences grow according to $\Omega_{Cat_2}$.
\end{proof}

\begin{figure}[ht]
\begin{center}
\IfFileExists{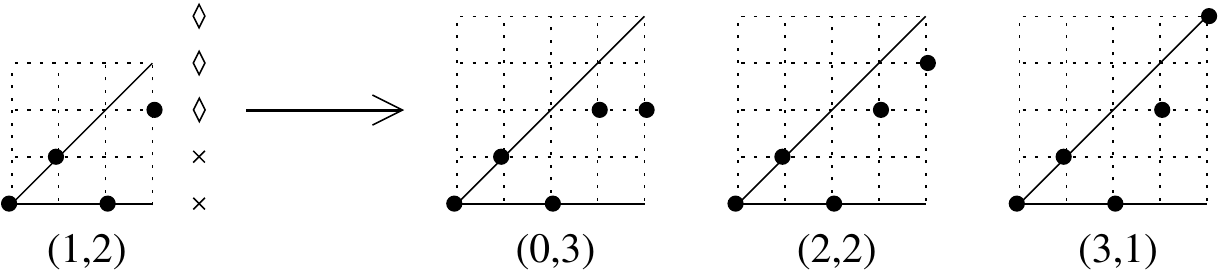}{\includegraphics[width=.6\textwidth]{catalan_growth.pdf}}{MISSING FILE}
\end{center}
\caption{The growth of a Catalan inversion sequence according to $\Omega_{Cat_2}$.}
\label{fig:cat_growth}
\end{figure}

It is well worth noticing that although the above succession rule $\Omega_{Cat_2}$ generates the well-known Catalan numbers, we do not have knowledge of this succession rule in the literature. 

\subsection{One-to-one correspondence with $AV(1\text{-}{23},2\text{-}{14}\text{-}3)$}

In this section we show that Catalan inversion sequences are just {\em left inversion tables} of permutations avoiding the patterns $1\text{-}{23}$ and $2\text{-}{14}\text{-}3$, 
thereby proving that the family of pattern-avoiding permutations $AV(1\text{-}23,2\text{-}14\text{-}3)$ forms a new occurrence of the Catalan numbers. 
We start by recalling some terminology and notation. 

A \emph{(Babson-Steingr\'imsson-)pattern} $\tau$ of length $k$ is any permutation of $\mathcal{S}_k$ where two adjacent entries may or may not be separated by a dash -- see~\cite{babson}. 
Such patterns are also called \emph{generalized} or \emph{vincular}. 
The absence of a dash between two adjacent entries in the pattern indicates that in any pattern-occurrence the two entries are required to be adjacent: 
a permutation $\pi$ of length $n\geq k$ \emph{contains} the vincular pattern $\tau$, if it contains $\tau$ as pattern, 
and moreover, there is an occurrence of the pattern $\tau$ where the entries of $\tau$ not separated by a dash are consecutive entries of the permutation $\pi$; 
otherwise, $\pi$ {\em avoids} the vincular pattern $\tau$. Let $\mathcal{T}$ be a set of patterns. 
We denote by $AV_n(\mathcal{T})$ the family of permutations of length $n$ that avoid any pattern in $\mathcal{T}$, and define $AV(\mathcal{T})=\cup_n AV_n(\mathcal{T})$.

\begin{proposition}
For any $n$, Catalan inversion sequences of length $n$ are in bijection with $AV_n(1\text{-}23,$ $2\text{-}14\text{-}3)$. 
Consequently, the family $AV(1\text{-}23,2\text{-}14\text{-}3)$ is enumerated by Catalan numbers.
\end{proposition}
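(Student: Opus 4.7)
The plan is to show that the classical inversion-table bijection $\mathtt{T}:\mathcal{S}_n \to I_n$ recalled in Section~1 restricts to a bijection between $AV_n(1\text{-}23, 2\text{-}14\text{-}3)$ and the Catalan inversion sequences $\mathbf{I}_n(\geq,-,\geq)$. Since $\mathtt{T}$ is already a bijection from $\mathcal{S}_n$ to $I_n$, it suffices to prove the set-equality
\[
\mathtt{T}\bigl(AV_n(1\text{-}23, 2\text{-}14\text{-}3)\bigr) = \mathbf{I}_n(\geq,-,\geq),
\]
whereupon the enumeration by Catalan numbers follows from Proposition~\ref{prop:catalaninvseq}.

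I would argue the equivalence contrapositively. Writing $(t_1,\ldots,t_n) = \mathtt{T}(\pi)$, the goal is to show that $\pi$ contains an occurrence of $1\text{-}23$ or of $2\text{-}14\text{-}3$ if and only if there exist indices $i<j<k$ with $t_i \geq t_j$ and $t_i \geq t_k$ (equivalently, by Proposition~\ref{prop:catwords}, a realization of one of the six word patterns $000, 100, 101, 110, 201, 210$). The bridge between the two sides is the definition of $t_i$ as a count of entries of $\pi$ in a specific position/value window. A first preparatory step would be a lemma translating the inequality $t_a \geq t_b$, for $a<b$, into a concrete combinatorial witness in $\pi$. In particular, for adjacent indices the ascent/descent status of the pair $(\pi_a,\pi_{a+1})$ fully determines the sign of $t_{a+1}-t_a$; this is the mechanism through which ascents in $\pi$ (the undashed parts of both vincular patterns) are read off from the inversion table.

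For the forward direction, given an occurrence of $1\text{-}23$ at positions $p_1 < p_2 < p_2+1$ with $\pi_{p_1} < \pi_{p_2} < \pi_{p_2+1}$, the ascent at $p_2$ produces the needed comparison between $t_{p_2}$ and $t_{p_2+1}$, while the position of $\pi_{p_1}$ contributes a third index completing a $(\geq,-,\geq)$ triple. The $2\text{-}14\text{-}3$ case is treated analogously: the key added ingredient is that $\pi_{p_4}$, sitting strictly between $\pi_{p_2}$ and $\pi_{p_2+1}$ in value and to the right of $p_2+1$ in position, contributes non-trivially to the relevant $t_i$'s, providing the missing inequality. For the reverse direction, given $i<j<k$ with $t_i \geq t_j, t_k$, I would first replace the triple by a minimal one (for instance, taking $j$ to be the smallest index larger than $i$ with $t_j \leq t_i$), so that the $t$-inequality forces a structural constraint on $\pi$ just to the right of $\pi_i$. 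The preparatory lemma then produces an explicit configuration in $\pi$ realizing either a $1\text{-}23$ occurrence (when a clean ascent-bottom smaller than $\pi_i$ is exhibited) or a $2\text{-}14\text{-}3$ occurrence (otherwise), the case split following the six word patterns of Proposition~\ref{prop:catwords}.

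The main obstacle is the reverse direction, and specifically the case analysis distinguishing when a $(\geq,-,\geq)$ witness forces the shorter pattern $1\text{-}23$ and when it instead forces the longer pattern $2\text{-}14\text{-}3$. The subtlety is that $t_i \geq t_j$ is only a count comparison, so extracting an explicit ascent in $\pi$ together with a specific ``dashed'' element requires combining several uses of the preparatory lemma. I expect the hardest sub-case to be when $t_i = t_j = t_k$ (the word pattern $000$): here no adjacency is distinguished by the $t$-values alone, and one must unpack the definition of $t$ more carefully to locate the required adjacent pair in $\pi$, which is precisely the configuration that typically forces a $2\text{-}14\text{-}3$ occurrence rather than the simpler $1\text{-}23$.
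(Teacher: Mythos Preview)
Your plan has a concrete error at its foundation: the map $\mathtt{T}$ alone does \emph{not} restrict to a bijection between $AV_n(1\text{-}23,2\text{-}14\text{-}3)$ and $\mathbf{I}_n(\geq,-,\geq)$. First, with the paper's definition $t_i=|\{j>i:\pi_i>\pi_j\}|$ one has $0\le t_i\le n-i$, so $(t_1,\dots,t_n)$ is not even an element of $I_n$ (where $0\le e_i<i$); a reversal is needed. More importantly, the equivalence you state --- ``$\pi$ contains $1\text{-}23$ or $2\text{-}14\text{-}3$ iff there exist $i<j<k$ with $t_i\ge t_j$ and $t_i\ge t_k$'' --- is false. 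Take $\pi=3142$: then $t=(2,0,1,0)$, and the triple $(i,j,k)=(1,2,3)$ satisfies $t_1\ge t_2$ and $t_1\ge t_3$; yet the only ascent of $\pi$ is $\pi_2\pi_3=14$, and neither $1\text{-}23$ (since $\pi_1=3\not<1$) nor $2\text{-}14\text{-}3$ (since $\pi_1=3\not<\pi_4=2$) occurs. So $\pi\in AV_4(1\text{-}23,2\text{-}14\text{-}3)$ while your criterion would exclude it.

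The paper fixes this by using $\mathtt{R}\circ\mathtt{T}$ instead of $\mathtt{T}$. After reversal, the condition ``$e\notin\mathbf{I}_n(\geq,-,\geq)$'' translates on the (unreversed) table $t$ to the existence of $i<j<k$ with $t_i\le t_k$ and $t_j\le t_k$ --- the largest entry sits on the \emph{right}, not the left. This is not a cosmetic change: it is exactly what makes your preparatory observation (ascent at $a$ $\Leftrightarrow$ $t_a\le t_{a+1}$) useful, because the adjacent pair you need to produce is $(j,j+1)$ to the \emph{left} of the extremal index $k$, matching the position of the undashed $23$/$14$ block in the vincular patterns. Once you insert the reversal and rewrite the target inequality accordingly, the rest of your outline (choosing a minimal witness, then splitting into the $1\text{-}23$ versus $2\text{-}14\text{-}3$ cases) is close in spirit to the paper's argument, though the actual case analysis is more delicate than your sketch suggests --- in particular, the paper's reverse direction pivots on whether $j+1=k$ and on comparing $t_{k-1}$ to $t_i,t_j$, rather than on the six word patterns of Proposition~\ref{prop:catwords}.
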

\begin{proof}
The second part of the statement is a immediate consequence of the first part, which we now prove.

Let $\mathtt{T}$ be the mapping associating to each $\pi\in\mathcal{S}_n$ its left inversion table $\mathtt{T}(\pi)=(t_1,\ldots,t_n)$. 
We will use many times the following simple fact: for every $i<j$, if $\pi_i>\pi_j$ (\emph{i.e.}  the pair $(\pi_i,\pi_j)$ is an {\em inversion}), then $t_i>t_j$.

Let $\mathtt{R}$ be the reverse operation on arrays. 
We can prove our statement by using the mapping $\mathtt{R}\circ\mathtt{T}$, which is a bijection between the family $\mathcal{S}_n$ of permutations and integer sequences $(e_1,\ldots,e_n)$ such that $0\leq e_i<i$.
We will simply show that the restriction of the bijection $\mathtt{R}\circ\mathtt{T}$ to the family $AV(1\text{-}23,2\text{-}14\text{-}3)$ yields a bijection with Catalan inversion sequences. Precisely, we want to prove that for every $n$, an inversion sequence is in the set $\{(\mathtt{R}\circ\mathtt{T})\,(\pi): \pi\in AV_n(1\text{-}23,2\text{-}14\text{-}3)\}$ if and only if it is a Catalan inversion sequence of length $n$ (\emph{i.e.} belongs to $\mathbf{I}_n(\geq,-,\geq)$).
\begin{itemize}
\item[$\Rightarrow)$] We prove the contrapositive: if $e\not\in \mathbf{I}_n(\geq,-,\geq)$, then $\pi=(\mathtt{R}\circ\mathtt{T})^{-1}(e)$ contains $1\text{-}23$ or $2\text{-}14\text{-}3$. Let $t=(t_1,\ldots,t_n)=(e_n,\ldots, e_1)$. Then, $t$ is the left inversion table of a permutation $\pi\in\mathcal{S}_n$, \emph{i.e.} $\mathtt{T}(\pi)=t$.
Since $e\not\in \mathbf{I}_n(\geq,-,\geq)$, there exist three indices, $i<j<k$, such that $t_i\leq t_k$ and $t_j\leq t_k$. 

Without loss of generality, we can suppose that there is no index $h$, such that $j<h<k$ and $t_i\leq t_h$ and $t_j\leq t_h$. Namely $t_k$ is the leftmost entry of $t$ that is at least as large as both $t_i$ and $t_j$. 
Then, we have two possibilities:
\begin{enumerate}
\item either $j+1=k$,
\item or $j+1\neq k$, and in this case it holds that $t_j>t_{k-1}$ or $t_i>t_{k-1}$.\end{enumerate}
First, from $t_i\leq t_k$ and $t_j\leq t_k$ it follows that $\pi_i<\pi_k$ and $\pi_j<\pi_k$. 

Now, we prove that both in case 1. and in case 2. above we have $\pi\not\in AV_n(1\text{-}23,2\text{-}14\text{-}3)$.
\begin{enumerate}
\item Let us consider the subsequence $\pi_i\pi_j\pi_{j+1}$. We have $\pi_i<\pi_{j+1}$ and $\pi_j<\pi_{j+1}$. If also $\pi_i<\pi_j$, then it forms a $1\text{-}23$. 

Otherwise, it must hold that $\pi_i>\pi_j$, and thus $t_j<t_i\leq t_{j+1}$. Since the pair $(\pi_i,\pi_j)$ is an inversion of $\pi$ and $t_i\leq t_{j+1}$, there must be a point $\pi_s$ on the right of $\pi_{j+1}$ such that $(\pi_{j+1},\pi_s)$ is an inversion and $(\pi_i,\pi_s)$ is not. Thus, $\pi_i\pi_j\pi_{j+1}\pi_s$ forms a $2\text{-}14\text{-}3$.

\item  First, if $t_j>t_{k-1}$, consider the subsequence $\pi_i\pi_j\pi_{k-1}\pi_k$. It follows that $t_{k-1}<t_k$, since $t_j\leq t_k$, and thus $\pi_{k-1}<\pi_k$. In addition, we know that $\pi_j<\pi_k$.
Then, $\pi_j\pi_{k-1}\pi_k$ forms an occurrence of  $1\text{-}23$ if $\pi_j<\pi_{k-1}$. Otherwise, it must hold that $\pi_j>\pi_{k-1}$. As in case 1.,  the pair $(\pi_j,\pi_{k-1})$ is an inversion, and $t_j\leq t_k$. Therefore, there must be an element $\pi_s$ on the right of $\pi_k$ such that $(\pi_k,\pi_s)$ is an inversion and $(\pi_j,\pi_s)$ is not. Hence $\pi_j\pi_{k-1}\pi_{k}\pi_s$ forms a $2\text{-}14\text{-}3$.

Now, suppose $t_j\leq t_{k-1}$, and consider the subsequence $\pi_i\pi_j\pi_{k-1}\pi_k$. According to case 2., it must be that $t_i>t_{k-1}$, and since $t_i\leq t_k$, it holds that $t_{k-1}<t_k$. Since both $\pi_j<\pi_{k-1}$ and $\pi_{k-1}<\pi_k$ hold, $\pi_j\pi_{k-1}\pi_k$ forms an occurrence of $1\text{-}23$. 
\end{enumerate}
\item[$\Leftarrow)$] By contrapositive, if a permutation $\pi$ contains $1\text{-}23$ or $2\text{-}14\text{-}3$, then $e=(\mathtt{R}\circ\mathtt{T})(\pi)$ is not in $\mathbf{I}(\geq,-,\geq)$.
\begin{itemize}
\item[-] If $\pi$ contains $1\text{-}23$, there must be two indices $i$ and $j$, with $i<j$, such that $\pi_i\pi_j\pi_{j+1}$ forms an occurrence of $1\text{-}23$. We can assume that no points $\pi_{i'}$ between $\pi_i$ and $\pi_j$ are such that $\pi_{i'}<\pi_i$. Otherwise we consider $\pi_{i'}\pi_j\pi_{j+1}$ as our occurrence of $1\text{-}23$.

Then, two relations hold: $t_i\leq t_{j+1}$ and $t_j\leq t_{j+1}$, and thus $e\not\in\mathbf{I}(\geq,-,\geq)$.

\item[-] If $\pi$ contains $2\text{-}14\text{-}3$, and avoids $1\text{-}23$, there must be three indices $i,j$ and $k$, with $i<j<j+1<k$, such that $\pi_i\pi_j\pi_{j+1}\pi_{k}$ forms an occurrence of $2\text{-}14\text{-}3$. We can assume that no points $\pi_{i'}$ between $\pi_i$ and $\pi_j$ are such that $\pi_{i'}<\pi_i$. Indeed, in case $\pi_{i'}<\pi_j$ held, $\pi_{i'}\pi_j\pi_{j+1}$ would be an occurrence of  $1\text{-}23$; whereas, if $\pi_j<\pi_{i'}<\pi_i$, we could consider $\pi_{i'}\pi_j\pi_{j+1}\pi_k$ as our occurrence of  $2\text{-}14\text{-}3$. 

Then, as above $t_j\leq t_{j+1}$, and $t_i+1\leq t_{j+1}$ because $(\pi_i,\pi_j)$ is an inversion of $\pi$. Nevertheless, $(\pi_{j+1},\pi_k)$ is an inversion of $\pi$ as well, and $\pi_i<\pi_k$. Thus, $t_i\leq t_{j+1}$ and $e\not\in\mathbf{I}(\geq,-,\geq)$. \qedhere
\end{itemize}
\end{itemize}
\end{proof}
We mention that although inversion sequences are actually a coding for permutations, 
it is often not easy (if at all possible) to characterize the families $\mathbf{I}(\rho_1,\rho_2,\rho_3)$ in terms of families of pattern-avoiding permutations. 
A few examples of bijective correspondences between pattern-avoiding inversion sequences and pattern-avoiding permutations have been collected in~\cite{savage}. 
We report below the examples of~\cite{savage} where the permutations are defined by the avoidance of \emph{classical} patterns: 
\begin{itemize}
 \item $\mathbf{I}(=,-,-)$ and $AV(123,132,231)$, \cite[Theorem 1]{savage};
 \item $\mathbf{I}(<,\neq,-)$ and $AV(213,321)$, \cite[Theorem 9]{savage};
  \item $\mathbf{I}(=,<,-)$ and $AV(132,231)$, \cite[Section 2.6.1]{savage};
  \item $\mathbf{I}(<,\geq,-)$ and $AV(213,312)$, \cite[Theorem 16]{savage};
\item $\mathbf{I}(-,>,-)$ and $AV(213)$, \cite[Theorem 27]{savage};
 \item $\mathbf{I}(>,<,-)$ and $AV(2143,3142,4132)$ and $AV(2143,3142,3241)$, \cite[Theorem 37-38]{savage};
\item $\mathbf{I}(>,-,\geq)$ and $AV(2134,2143)$, \cite[Theorem 40]{savage};
\item $\mathbf{I}(\geq,\neq,\geq)$ and $AV(4321,4312)$, \cite[Theorem 45]{savage}.
\end{itemize}

In addition, \cite[Theorem 56]{savage} shows a bijective correspondence
between $\mathbf{I}(>,\neq,>)$ and a family of permutations avoiding a specific marked mesh pattern. 
Our case of $\mathbf{I}(\geq,-,\geq)$ and $AV(1\text{-}{23},2\text{-}{14}\text{-}3)$ 
shows another example of such bijective correspondences, where the excluded patterns on permutations are however \emph{vincular}. 

\section{Inversion sequences $\mathbf{I}(\geq,\geq,\geq)$}\label{sec:invseq3cross}

Following the hierarchy of Figure~\ref{fig:hierarchy}, the next family we turn to is $\mathbf{I}(\geq,\geq,\geq)$.
This family was originally conjectured in~\cite{savage} to be counted by sequence A108307 on~\cite{OEIS}, which is defined as the enumerative sequence of set partitions of $\{1,\ldots, n\}$ that avoid enhanced 3-crossings~\cite{MBM_Xin}.
In~\cite[Proposition 2]{MBM_Xin} it is proved that the
number $E_3(n)$ of these set partitions is given by $E_3(0)=E_3(1)=1$ and the recursive relation
\begin{equation}\label{eq:rec3cros}
8(n+3)(n+1)E_3(n)+(7n^2+53n+88)E_3(n+1)-(n+8)(n+7)E_3(n+2)=0\,,
\end{equation}
which holds for all $n\geq0$.
Thus, the first terms of sequence A108307 according to recurrence~\eqref{eq:rec3cros} are
\[1, 1, 2, 5, 15, 51, 191, 772, 3320, 15032, 71084, 348889, 1768483, 9220655, 49286863,\ldots\]

At the conference \emph{Permutation Patterns 2017} in Reykjavik, 
we presented~\cite{SimonePP} a proof that the enumerative sequence of the family $\mathbf{I}(\geq,\geq,\geq)$ is indeed the sequence A108307. 
Our proof works as follows. 
First, we build a generating tree for $\mathbf{I}(\geq,\geq,\geq)$, 
which is encoded by a succession rule that generalizes the one in Proposition~\ref{prop:catinvseq}.
Then, we solve the resulting functional equation using
a variant of the so-called kernel method -- see~\cite{BM,KM1} and references therein -- 
which is sometimes referred to as \emph{obstinate} kernel method. 
The Lagrange inversion formula can then be applied to yield a closed formula for 
the number of inversion sequences in $\mathbf{I}_n(\geq,\geq,\geq)$.
And finally, using the method of creative telescoping, 
we deduce from this closed formula a recurrence satisfied by the considered enumerating sequence.

The details of this proof are not provided in the following. 
The interested reader may however find them in a previous version of our paper~\cite{arxivV1}, 
or in the PhD thesis of the third author~\cite[Section 5.2]{TesiVeronica}. 
The reason for this omission is that essentially the same proof has been independently found by Lin~\cite{LinKernel}. 
In the following, we simply give some statements that constitute the main steps of the proof, 
together with a reference to the corresponding statements in the paper of Lin. 

We also point out to the interested reader that Yan~\cite{Yan} has now also provided a bijective proof 
that inversion sequences in $\mathbf{I}_n(\geq,\geq,\geq)$ and set partitions avoiding enhanced 3-crossings 
are enumerated by the same sequence.

\subsection{Combinatorial characterization}
To start, we provide a combinatorial description of the family $\mathbf{I}(\geq,\geq,\geq)$, 
which is useful to prove Proposition~\ref{prop:3crosinvseq}.

As Figure~\ref{fig:hierarchy} shows, the
family $\mathbf{I}(\geq,\geq,\geq)$ properly includes $\mathbf{I}(\geq,-,\geq)$ as a subfamily. 
For instance, the inversion sequence $(0,0,1,1,4,2,6,5)$ is both in $\mathbf{I}_8(\geq,-,\geq)$ 
and in $\mathbf{I}_8(\geq,\geq,\geq)$, while $(0,1,0,1,4,2,3,5)$ is not in $\mathbf{I}_8(\geq,-,\geq)$ 
despite being in $\mathbf{I}_8(\geq,\geq,\geq)$. The following characterization makes this fact explicit.

\begin{proposition}\label{prop:3crosscharacterisation}
An inversion sequence belongs to $\mathbf{I}(\geq,\geq,\geq)$ if and only if it avoids $000$, $100$, $110$ and $210$.
\end{proposition}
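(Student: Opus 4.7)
The plan is to unfold the definitions directly and then enumerate the relevant length-$3$ word patterns. By definition, $e \in \mathbf{I}(\geq,\geq,\geq)$ means that there is no triple $i<j<k$ with $e_i \geq e_j$, $e_j \geq e_k$, and $e_i \geq e_k$ simultaneously. The first step is to notice that the third inequality is redundant: $e_i \geq e_j$ and $e_j \geq e_k$ together already imply $e_i \geq e_k$ by transitivity. Hence membership in $\mathbf{I}(\geq,\geq,\geq)$ is equivalent to the absence of a triple $i<j<k$ with $e_i \geq e_j \geq e_k$.

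The second step is to translate ``$e_i \geq e_j \geq e_k$'' into a finite list of word patterns. Following the convention recalled in the introduction (equal letters in a pattern mean equal values, while a strictly larger digit means a strictly larger value), I would enumerate all reductions of a weakly decreasing length-$3$ sequence. There are exactly four: $000$ (all equal), $110$ (equal then strictly larger on the left), $100$ (strictly larger on the left then equal), and $210$ (two strict descents). Any occurrence of $e_i \geq e_j \geq e_k$ reduces to exactly one of these four patterns, and conversely each of these patterns realizes the weak chain $e_i \geq e_j \geq e_k$. Combining the two steps yields the claimed equivalence.

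There is no real obstacle here; the argument is completely parallel to the one already given for Proposition on Catalan inversion sequences. The only care needed is in checking completeness of the pattern list, which I would do by comparing against the six patterns arising in the $\mathbf{I}(\geq,-,\geq)$ case: requiring the extra middle relation $e_j \geq e_k$ eliminates precisely $101$ (where $e_j < e_k$) and $201$ (also $e_j < e_k$), while keeping $000, 100, 110, 210$, as expected.
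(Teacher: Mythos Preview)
Your proof is correct and follows essentially the same approach as the paper, which simply states that it is a quick check that containing $e_i,e_j,e_k$ with $e_i\geq e_j\geq e_k$ is equivalent to containing the listed patterns. You have merely spelled out this check in more detail (including the redundancy of the third relation by transitivity and the cross-check against the Catalan case), which is entirely in line with the paper's terse argument.
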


\begin{proof}
The proof is a quick check that containing  $e_i,e_j,e_k$ such that $e_i\geq e_j\geq e_k$, with $i<j<k$, is equivalent to containing the above patterns.
\end{proof}

The above result makes clear that every Catalan inversion sequence is in $\mathbf{I}(\geq,\geq,\geq)$.
In addition, Proposition~\ref{prop:3crosscharacterisation} proves the following property stated in~\cite[Observation 7]{savage}.

\begin{remark}
\label{rem:3crosinvseq}
Let any inversion sequence $e=(e_1,\ldots,e_n)$ be decomposed into two subsequences $e^{LTR}$, 
which is the increasing sequence of left-to-right maxima of $e$ (\emph{i.e.} entries $e_i$ such that $e_i>e_j$, for all $j<i$), 
and $e^{bottom}$, which is the (possibly empty) sequence comprised of all the remaining entries of $e$.

Then, an inversion sequence $e$ is in the set $\mathbf{I}(\geq,\geq,\geq)$ if and only if 
$e^{LTR}$ and $e^{bottom}$ are both strictly increasing sequences -- 
see decomposition in Figure~\ref{fig:I2_char} where the sequence $e^{LTR}$ is highlighted. 
\end{remark}

\begin{figure}[ht]
\begin{center}
\IfFileExists{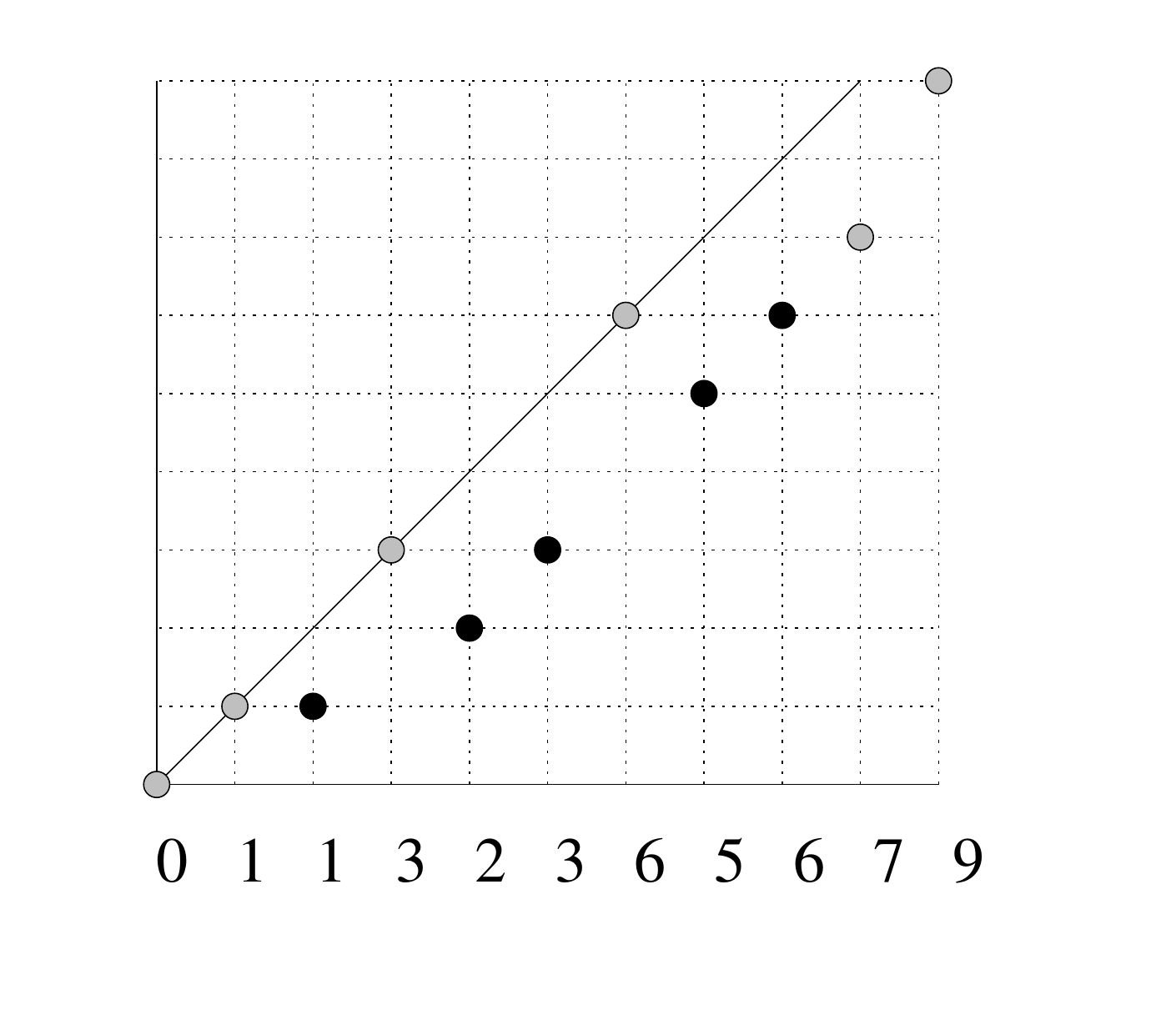}{\includegraphics[width=.35\textwidth]{inter1.pdf}\vspace{-7mm}}{MISSING FILE}
\end{center}
\caption{An inversion sequence in $\mathbf{I}(\geq,\geq,\geq)$ and its decomposition according to $e^{LTR}$ and $e^{bottom}$.}
\label{fig:I2_char}
\end{figure}

\subsection{Enumerative results}
\begin{proposition}\label{prop:3crosinvseq}
The family $\mathbf{I}(\geq,\geq,\geq)$ grows according to the following succession rule
\[\Omega_{\mathbf{I}(\geq,\geq,\geq)}=\left\{\begin{array}{lll}
(1,1)\\
\\
(h,k) &\rightsquigarrow &\hspace{-3mm} (h-1,k+1),(h-2,k+1),\ldots,(0,k+1),\\
&&\hspace{-3mm} (h+1,k),(h+2,k-1), \dots , (h+k,1). \end{array}\right.\]
\end{proposition}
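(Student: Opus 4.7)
The plan is to describe a growth of $\mathbf{I}(\geq,\geq,\geq)$ by adding a new rightmost entry, in the same spirit as the proof of Proposition~\ref{prop:catinvseq}, but refining the statistics to reflect the richer structure permitted by Remark~\ref{rem:3crosinvseq}. Given $e=(e_1,\ldots,e_n)\in\mathbf{I}(\geq,\geq,\geq)$, I first characterize the values $p$ for which $e'=(e_1,\ldots,e_n,p)$ remains in the class. By Proposition~\ref{prop:3crosscharacterisation}, appending $p$ creates a forbidden pattern among $000,100,110,210$ if and only if there exist $i<j$ with $e_i\geq e_j\geq p$; since $e_i\geq e_j$ for some $i<j$ is equivalent to $e_j\in e^{bottom}$, this reduces to asking whether some entry of $e^{bottom}$ is $\geq p$. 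Writing $b:=\max(e^{bottom})$, with the convention $b=-1$ when $e^{bottom}$ is empty, the valid additions are therefore exactly $p\in\{b+1,b+2,\ldots,n\}$.

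Next, I partition the valid additions into \emph{small} ones ($b<p\leq\max(e)$) and \emph{large} ones ($\max(e)<p\leq n$), and attach to $e$ the label $(h,k)$ where $h:=\max(e)-b$ and $k:=n-\max(e)$, so $h$ (resp.\ $k$) counts the small (resp.\ large) additions. The one-element sequence $e=(0)$ has $\max(e)=0$ and $b=-1$, hence receives label $(1,1)$, matching the axiom of $\Omega_{\mathbf{I}(\geq,\geq,\geq)}$.

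It then remains to verify the production. If $p$ is a small addition, then $\max(e')=\max(e)$ while $e^{bottom}$ is extended by $p$, so $b'=p$; the child thus carries the label $(\max(e)-p,\,k+1)$, and as $p$ ranges over $\{b+1,\ldots,\max(e)\}$ this yields exactly $(h-1,k+1),(h-2,k+1),\ldots,(0,k+1)$. If instead $p$ is a large addition, then $p$ becomes a new left-to-right maximum and $e^{bottom}$ (hence $b$) is unchanged, so the child carries $(p-b,\,n+1-p)$; writing $p=\max(e)+j$ for $j=1,\ldots,k$ produces the labels $(h+1,k),(h+2,k-1),\ldots,(h+k,1)$. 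Together these match the two rows of the production of $(h,k)$.

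The step I expect to be most delicate is the very first one: recognizing that the obstruction to appending $p$ is controlled by the \emph{single} number $b=\max(e^{bottom})$, which in turn relies crucially on the strict monotonicity of $e^{bottom}$ granted by Remark~\ref{rem:3crosinvseq}. Once this is in place, the label bookkeeping is a direct but careful generalization of the Catalan case $\Omega_{Cat_2}$, the key difference being that appending a small $p$ now updates $b$ to $p$ rather than destroying all further small additions; this is precisely what causes the degenerate production $(0,k+1)^h$ of $\Omega_{Cat_2}$ to be refined into the distinct labels $(h-1,k+1),\ldots,(0,k+1)$ in $\Omega_{\mathbf{I}(\geq,\geq,\geq)}$.
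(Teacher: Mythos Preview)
Your proof is correct and follows essentially the same approach as the paper's: growth by appending a rightmost entry, with label $(h,k)=(\max(e)-b,\,n-\max(e))$. The only cosmetic difference is that the paper defines the obstruction parameter as $\mathrm{last}(e)$, the \emph{rightmost} entry of $e^{bottom}$, whereas you use $b=\max(e^{bottom})$; these coincide because $e^{bottom}$ is strictly increasing by Remark~\ref{rem:3crosinvseq}, a fact you explicitly invoke.
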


This proposition corresponds to Lemma 2.2 in~\cite{LinKernel}. 
It is proved by letting inversion sequences of $\mathbf{I}(\geq,\geq,\geq)$ grow by adding a new rightmost entry, 
and by giving to each such inversion sequence $e$ a label as follows.  
Let $\max(e)$ is the maximum value of $e$ and $\mbox{last}(e)$ be the rightmost entry of $e^{bottom}$, if there is any, otherwise $\mbox{last}(e):=-1$. 
The label $(h,k)$ of $e$ is then defined by $h=\max(e)-\mbox{last}(e)$ and $k=n-\max(e)$. 
The growth of inversion sequences of $\mathbf{I}(\geq,\geq,\geq)$ is illustrated in Figure~\ref{fig:lin}. 

\begin{figure}[ht]
\begin{center}
\includegraphics[width=.75\textwidth]{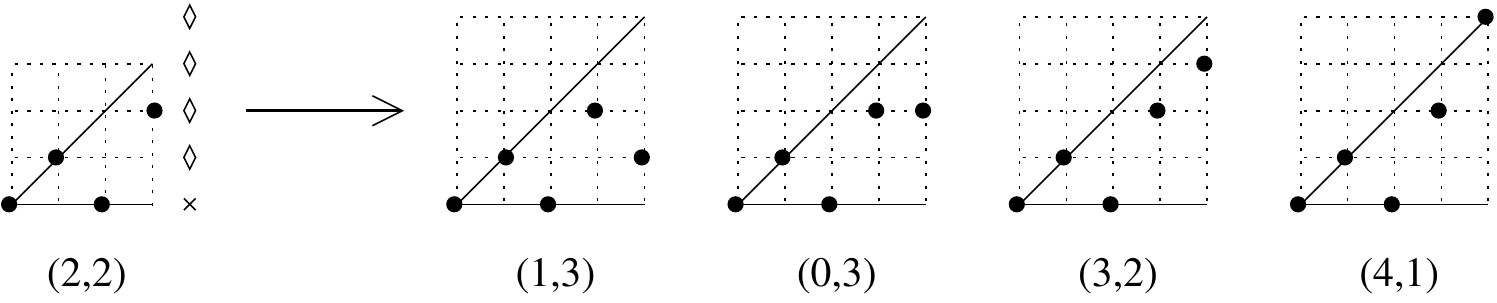}\vspace{-7mm}
\end{center}
\caption{The growth of inversion sequences of $\mathbf{I}(\geq,\geq,\geq)$.}
\label{fig:lin}
\end{figure}

The next steps toward the enumeration of the family $\mathbf{I}(\geq,\geq,\geq)$ are to translate 
the succession rule of Proposition~\ref{prop:3crosinvseq} into a functional equation, 
and then to solve it. 

For $h,k\geq0$, let $A_{h,k}(x)\equiv A_{h,k}$ denote the size generating function of inversion sequences of the family $\mathbf{I}(\geq,\geq,\geq)$ having label $(h,k)$. 
The rule $\Omega_{\mathbf{I}(\geq,\geq,\geq)}$ translates using a standard technique into a functional equation for the generating function $A(x;y,z)\equiv A(y,z)=\sum_{h,k\geq0}A_{h,k}\,y^hz^k$.

\begin{proposition}
The generating function $A(y,z)$ satisfies the following functional equation
\begin{equation}\label{eq:3crosfe}
A(y,z)=xyz+\frac{xz}{1-y}\left(A(1,z)-A(y,z)\right)+\frac{xyz}{z-y}\left(A(y,z)-A(y,y)\right)\,.
\end{equation}
\end{proposition}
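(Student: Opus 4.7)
The plan is to translate the succession rule $\Omega_{\mathbf{I}(\geq,\geq,\geq)}$ into the functional equation by a direct term-by-term bookkeeping. Write $A(y,z)=\sum_{h,k\geq 0}A_{h,k}(x)\, y^h z^k$, where $A_{h,k}(x)$ tracks by size the inversion sequences carrying label $(h,k)$ in the generating tree. The axiom $(1,1)$ corresponds to the unique inversion sequence of length $1$ and so contributes the monomial $xyz$. For every other object, obtained by one application of the productions of the rule, the size increases by one (giving a factor of $x$) and the label shifts from its parent's $(h,k)$ to one of the children listed in $\Omega_{\mathbf{I}(\geq,\geq,\geq)}$. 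Hence
\[
A(y,z)=xyz+x\sum_{h,k\geq 0}A_{h,k}\Bigl(\sum_{j=0}^{h-1}y^j z^{k+1}+\sum_{i=1}^{k}y^{h+i}z^{k+1-i}\Bigr).
\]

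Next, I would handle the two inner sums separately using finite geometric series, turning them into closed-form rational expressions in $y,z$. For the first family of productions I would use $\sum_{j=0}^{h-1}y^j=\frac{1-y^h}{1-y}$; after multiplication by $z^{k+1}$ and summation over $h,k$ this yields
\[
\frac{xz}{1-y}\sum_{h,k}A_{h,k}(1-y^h)z^k=\frac{xz}{1-y}\bigl(A(1,z)-A(y,z)\bigr),
\]
recognizing the two specializations $y=1$ and $y=y$. For the second family I would pull out $y^h$ and rewrite
\[
\sum_{i=1}^{k}y^{i}z^{k+1-i}=yz\cdot\frac{y^k-z^k}{y-z},
\]
so that summing over $h,k$ with weight $xA_{h,k}$ produces
\[
\frac{xyz}{y-z}\sum_{h,k}A_{h,k}\,y^h(y^k-z^k)=\frac{xyz}{y-z}\bigl(A(y,y)-A(y,z)\bigr),
\]
which is exactly $\frac{xyz}{z-y}(A(y,z)-A(y,y))$. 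Assembling the three contributions gives equation~\eqref{eq:3crosfe}.

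Since each $A_{h,k}(x)$ is a formal power series in $x$ and the rewritings $(1-y^h)/(1-y)=1+y+\cdots+y^{h-1}$ and $(y^k-z^k)/(y-z)=y^{k-1}+y^{k-2}z+\cdots+z^{k-1}$ are polynomial identities, the apparent poles at $y=1$ and $y=z$ are removable, so the manipulations are justified as formal power series identities, not merely as identities of rational functions. The only real care needed is to keep the index ranges straight when summing the finite productions (in particular, the first family is empty when $h=0$ and the second when $k=0$, which is compatible with the formulas above). No deeper obstacle appears: the whole proof is a routine, if slightly bookkeeping-heavy, translation of the rule $\Omega_{\mathbf{I}(\geq,\geq,\geq)}$ into a functional equation, in the same style as the standard derivations discussed in~\cite{BM,KM1}.
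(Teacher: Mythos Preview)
Your proof is correct and follows exactly the ``standard technique'' the paper alludes to: translating the productions of $\Omega_{\mathbf{I}(\geq,\geq,\geq)}$ term by term into the functional equation via finite geometric sums. The paper itself does not spell out the computation (it simply refers to~\cite{LinKernel}), so your write-up is in fact more detailed than, but entirely consistent with, the paper's own treatment.
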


The above statement coincides with Proposition 2.3 in~\cite{LinKernel}. 

Equation~\eqref{eq:3crosfe} is a linear functional equation with two catalytic variables, $y$ and $z$, in the sense of Zeilberger~\cite{ZeilbergerCAT}.
Similar functional equations have been solved by using the {\em obstinate kernel method} (see~\cite{BM,semibaxterlong}, and references therein), 
which allows us to provide the following expression for the generating function of $\mathbf{I}(\geq,\geq,\geq)$.
Note that the same method was also applied in~\cite{LinKernel} to derive the following theorem (Theorem 3.1 in~\cite{LinKernel}).
\begin{thm}\label{thm:GF3cros}
Let $W(x;a)\equiv W$ be the unique formal power series in $x$ such that

\begin{equation*}
W=x\bar{a}(W+1+a)(W+a+a^2).
\end{equation*}

The series solution $A(y,z)$ of Equation~\eqref{eq:3crosfe} satisfies 
\[A(1+a,1+a)=\left[\frac{Q(a,W)}{(1+a)^3}\right]^\geq,\] 
where $Q(a,W)$ is a polynomial in $W$ whose coefficients are Laurent polynomials in $a$ defined by
\begin{align*}
Q(a,W)&=\left(-\frac{1}{a^6}-\frac{3}{a^5}-\frac{3}{a^4}-\frac{1}{a^3}+1+3a+3a^2+a^3\right)\,W\\
&+\left(\frac{1}{a^5}+\frac{1}{a^4}-\frac{1}{a}-1\right)\,W^2
+\left(\frac{1}{a^6}-\frac{1}{a^4}+\frac{1}{a^3}-\frac{1}{a}\right)\,W^3
+\left(-\frac{1}{a^5}+\frac{1}{a^4}\right)\,W^4,
\end{align*}
and the notation $[Q(a,W)/(1+a)^3]^\geq$ stands for the formal power series in $x$ obtained by considering 
only those terms in the series expansion of $Q(a,W)/(1+a)^3$ that have non-negative powers of $a$.
\end{thm}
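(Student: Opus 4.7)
The plan is to solve the functional equation \eqref{eq:3crosfe}, which has two catalytic variables $y$ and $z$, by the obstinate kernel method, following the same lines as Lin's proof in \cite{LinKernel}. First I would clear denominators in \eqref{eq:3crosfe} by multiplying through by $(1-y)(z-y)$, obtaining a polynomial identity of the shape
\begin{equation*}
K(y,z)\, A(y,z) = xyz(1-y)(z-y) + xz(z-y)\, A(1,z) - xyz(1-y)\, A(y,y),
\end{equation*}
where a direct expansion shows that the kernel is
\begin{equation*}
K(y,z) = (1+xz)(y^2 + z) - y(1+z+2xz),
\end{equation*}
which is quadratic both in $y$ and in $z$. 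Viewed as a quadratic in $y$ the two roots satisfy $y_1 y_2 = z$, so that $(y,z) \mapsto (z/y,\, z)$ is an involution preserving $K$; viewed as a quadratic in $z$ one obtains a second involution with $z_1 z_2 = y(y-1)/x$.

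The obstinate kernel method now proceeds as follows. Let $\mathcal{G}$ be the group generated by these two birational involutions. I would substitute each element $g \in \mathcal{G}$ into the polynomial equation above, obtaining a system of equations in the unknowns $A(y,z)$, $A(1,z)$ and $A(y,y)$ evaluated at the various transforms $g(y,z)$. Forming a suitable signed linear combination over $\mathcal{G}$ causes all the bivariate $A(y,z)$-contributions to cancel while the boundary terms survive in a controlled form. Specialising $y = z = 1+a$ then reduces matters to a single identity for the univariate series $A(1+a,1+a)$.

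The auxiliary series $W = W(x;a)$ defined by $W = x\bar{a}(W+1+a)(W+a+a^2)$ is introduced at this stage: it parametrises the relevant kernel root under the specialisation $y=z=1+a$, which is precisely what makes the substitutions compatible, and it recasts the identity for $A(1+a,1+a)$ as a Laurent expression in $a$ whose coefficients are polynomials in $W$. This expression decomposes as a part with nonnegative powers of $a$ plus an ``artefact'' part with negative powers of $a$ stemming from the kernel substitutions. Since $A(1+a,1+a)$ is a genuine formal power series in $a$, the negative-power part must vanish identically, and extracting the nonnegative part of the right-hand side yields the announced formula
\begin{equation*}
A(1+a,1+a) = \left[ \frac{Q(a,W)}{(1+a)^3} \right]^{\geq},
\end{equation*}
with $Q(a,W)$ of the stated shape. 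A final direct computation pins down the explicit Laurent polynomial coefficients displayed in the theorem.

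The main obstacle is the group-theoretic step: verifying that $\mathcal{G}$ (or a finite subset of it) supports a signed summation in which all of the unwanted $A(y,z)$-contributions cancel, and tracking the remaining boundary terms so that they collapse to a manageable identity in $A(1,z)$ and $A(y,y)$. Once this is organised and the series $W$ is brought in as the natural kernel parameter, the rest of the argument is algebraic bookkeeping of the Laurent expansion in $a$.
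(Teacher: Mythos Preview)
Your proposal is correct and follows the same approach that the paper indicates: the paper does not give a detailed proof of this theorem but explicitly states that it is obtained by the obstinate kernel method applied to Equation~\eqref{eq:3crosfe}, referring for details to Lin~\cite{LinKernel} and to the earlier arXiv version~\cite{arxivV1}. Your outline---clearing denominators, identifying the kernel $K(y,z)=(1+xz)(y^2+z)-y(1+z+2xz)$ and its two commuting birational involutions, forming the orbit sum to kill the bivariate unknown, specialising $y=z=1+a$, and extracting the nonnegative part in $a$ via the algebraic parametrisation $W$---is exactly this method, and your kernel and root-product computations ($y_1y_2=z$, $z_1z_2=y(y-1)/x$) check out.
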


Note that $W$ and $Q(a,W)$ are algebraic series in $x$ whose coefficients are Laurent polynomials in $a$. 
It follows, as in~\cite[page 6]{BM}, that $A(1+a,1+a)$ is D-finite. 
Hence, the specialization $A(1,1)$, which is the generating function of $\mathbf{I}(\geq,\geq,\geq)$, is D-finite as well. 

Applying the Lagrange inversion formula to the expression of $A(1+a,1+a)$ in Theorem~\ref{thm:GF3cros}, 
we can derive an explicit, yet very complicated, expression for the coefficients of the generating function $A(1,1)$.
Although this expression is complicated, 
Zeilberger's method of creative telescoping~\cite{zeilberger,cretele} can be applied to it, 
and provides a much simpler recursive formula satisfied by these numbers. 
This is also how the proof that $\mathbf{I}(\geq,\geq,\geq)$ is enumerated by~\cite[A108307]{OEIS} is concluded in~\cite{LinKernel}, 
giving the following statement. 

\begin{proposition}\label{prop:3cros2}
Let $a_n=|\mathbf{I}_n(\geq,\geq,\geq)|$.
The numbers $a_n$ are recursively defined by $a_0=a_1=1$ and for $n\geq0$,
\[8(n+3)(n+1)a_n+(7n^2+53n+88)a_{n+1}-(n+8)(n+7)a_{n+2}=0\,.\]

Thus, $\{a_n\}_{n\geq0}$ is sequence A108307 on~\cite{OEIS}.
\end{proposition}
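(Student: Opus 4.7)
The plan is to chain together the three ingredients already prepared in Sections 3.1--3.2 of the excerpt, following the route summarized by the authors (and worked out in full by Lin~\cite{LinKernel}). Since Theorem~\ref{thm:GF3cros} gives a closed form for $A(1+a,1+a)$ in terms of the algebraic series $W$, and since $A(1,1) = \sum_n a_n x^n$, the first step is simply to specialize. Because $A(1+a,1+a) = \sum_{h,k\ge 0} A_{h,k}(x)(1+a)^{h+k}$ is a power series in $x$ whose $x$-coefficients are polynomials in $a$, extracting its constant term in $a$ yields $A(1,1)$; and that constant term is unaffected by the ``non-negative part'' operator $[\cdot]^{\ge}$, so
\[
a_n \;=\; [x^n]\,[a^0]\,\frac{Q(a,W)}{(1+a)^3}.
\]

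The second step is to turn this into an explicit sum by Lagrange inversion. The defining relation $W = x\,\bar a\,(W+1+a)(W+a+a^2)$ has the form $W = x\,\phi(a,W)$ with $\phi$ polynomial in $W$ of degree $2$ and Laurent-polynomial in $a$, so Lagrange inversion gives
\[
[x^n]\,H(W) \;=\; \frac{1}{n}\,[W^{n-1}]\,H'(W)\,\phi(a,W)^n
\]
for $H(W) = Q(a,W)/(1+a)^3$. Expanding $(W+1+a)^n(W+a+a^2)^n$ by the binomial theorem and reading off the appropriate powers of $W$ and of $a$ produces an explicit (but admittedly unwieldy) double or triple hypergeometric-type sum for $a_n$.

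The third step is to feed that sum to Zeilberger's creative telescoping algorithm~\cite{zeilberger,cretele}, which automatically outputs a P-recurrence for $a_n$. A direct computation will yield
\[
8(n+3)(n+1)a_n + (7n^2+53n+88)a_{n+1} - (n+8)(n+7)a_{n+2} = 0,
\]
matching Equation~\eqref{eq:rec3cros} verbatim. To finish, I would verify $a_0 = a_1 = 1$ directly from the definition of $\mathbf{I}(\geq,\geq,\geq)$ (the only inversion sequences of lengths $0$ and $1$ are the empty one and $(0)$), so the sequence $\{a_n\}$ and the sequence defining A108307 agree at their initial values and obey the same three-term recurrence, hence coincide.

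The technical bottleneck is unmistakably the creative-telescoping step: the Lagrange-inversion sum one starts from has several nested binomial factors with mixed sign patterns coming from the Laurent monomials in $a$, and the telescoping certificate produced by Zeilberger's algorithm on such input is typically large. All other steps are structural and follow from results already in the excerpt: Proposition~\ref{prop:3crosinvseq} supplies the succession rule, the accompanying functional equation~\eqref{eq:3crosfe} is solved in Theorem~\ref{thm:GF3cros}, and the remaining work is the standard ``Lagrange inversion $+$ creative telescoping'' passage from an algebraic generating function to a linear recurrence with polynomial coefficients.
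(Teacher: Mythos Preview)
Your proposal is correct and follows precisely the route the paper itself outlines (and attributes to Lin~\cite{LinKernel}): specialize Theorem~\ref{thm:GF3cros} at $a=0$ to recover $A(1,1)$, apply Lagrange inversion to the algebraic equation for $W$ to obtain an explicit multi-sum for $a_n$, and then run creative telescoping to extract the three-term recurrence. The paper does not give these details in the body (referring instead to~\cite{arxivV1,TesiVeronica,LinKernel}), so your write-up is in fact more explicit than what appears here, but the strategy is identical.
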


\section{Baxter inversion sequences: $\mathbf{I}(\geq,\geq,>)$}\label{sec:invseq3rd}
The next family of inversion sequences according to the hierarchy of Figure~\ref{fig:hierarchy} is $\mathbf{I}(\geq,\geq,>)$. 
This family of inversion sequences was originally conjectured in~\cite{savage} to be counted by the sequence A001181 \cite{OEIS} of Baxter numbers, whose first terms are
\[1,2,6,22,92,422,2074,10754,58202,326240, 1882960, 11140560,67329992, \ldots\]
This conjecture has recently been proved in~\cite[Theorem 4.1]{KL}. 
Accordingly, we call $\mathbf{I}(\geq,\geq,>)$ the family of {\em Baxter inversion sequences}. 

The proof of \cite[Theorem 4.1]{KL} is analytic. 
Precisely, \cite[Lemma 4.3]{KL} provides a succession rule for $\mathbf{I}(\geq,\geq,>)$. 
It is then shown to generate Baxter numbers using the obstinate kernel method and the results in~\cite[Section 2]{BM}. 
This succession rule is however not a classical one associated with Baxter numbers, and 
no other Baxter family is known to grow according to this new Baxter succession rule. 
It would be desirable to establish a closer link (either via generating trees, or via bijections) 
between $\mathbf{I}(\geq,\geq,>)$ and any other known Baxter family. 

\subsection{Combinatorial characterization}
The family of Baxter inversion sequences clearly contains $\mathbf{I}(\geq,\geq,\geq)$, as shown by the following characterization. 
\begin{proposition}\label{prop:baxterinv}
An inversion sequence is a Baxter inversion sequence if and only if it avoids $100$,  $110$ and $210$.
\end{proposition}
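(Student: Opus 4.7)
The plan is to mimic the quick case-analysis used for Propositions~\ref{prop:catwords} and \ref{prop:3crosscharacterisation}: expand the relational condition defining $\mathbf{I}(\geq,\geq,>)$ on a triple $i<j<k$ and observe that the only possibilities are exactly the three forbidden word-patterns $100$, $110$, $210$.

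More concretely, I would first recall the definition: an inversion sequence $e$ fails to belong to $\mathbf{I}(\geq,\geq,>)$ precisely when there exist indices $i<j<k$ with $e_i \geq e_j$, $e_j \geq e_k$ and $e_i > e_k$. In particular this forces the weak chain $e_i \geq e_j \geq e_k$ together with the strict inequality $e_i > e_k$. Then I would split into cases according to where the strict inequality sits. If $e_j = e_k$, the forcing $e_i > e_k = e_j$ exhibits a $100$-pattern. If instead $e_j > e_k$, then either $e_i = e_j$, giving $e_i = e_j > e_k$, i.e.\ a $110$-pattern, or $e_i > e_j$, giving $e_i > e_j > e_k$, i.e.\ a $210$-pattern. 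This shows that containing the triple $(\geq,\geq,>)$ implies containing one of $100$, $110$, $210$.

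For the converse direction, I would simply verify that each of the three word-patterns $100$, $110$, $210$ trivially satisfies the relational triple $(\geq,\geq,>)$: in all three cases the first value weakly dominates the second, the second weakly dominates the third, and the first strictly exceeds the third. Since $\mathbf{I}(\geq,\geq,>)$ is defined by forbidding this relational triple, avoiding the three patterns is equivalent to belonging to $\mathbf{I}(\geq,\geq,>)$.

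There is no real obstacle here; the statement is a routine unpacking of definitions and belongs to the same family as Propositions~\ref{prop:catwords} and \ref{prop:3crosscharacterisation}. The only point worth emphasising in the write-up is the exhaustiveness of the case split, so I would present the forward direction as a short dichotomy on whether $e_j = e_k$ or $e_j > e_k$ (and then on whether $e_i = e_j$ or $e_i > e_j$), and dispatch the backward direction in a single sentence.
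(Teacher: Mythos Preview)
Your proposal is correct and follows exactly the approach the paper intends: the paper's own proof simply says the statement is ``readily checked, as in Propositions~\ref{prop:catwords} and~\ref{prop:3crosscharacterisation}'', and your case analysis on whether $e_j=e_k$ or $e_j>e_k$ (and then $e_i=e_j$ or $e_i>e_j$) is precisely the routine unpacking those references point to.
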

\begin{proof}
The statement is readily checked, as in Propositions~\ref{prop:catwords} and~\ref{prop:3crosscharacterisation}.
\end{proof}
Another characterization for this family is the following. 
Recall that for an inversion sequence $e=(e_1,\ldots,e_n)$, we call an entry $e_i$ a LTR maximum (resp. RTL minimum), if $e_i>e_j$, for all $j<i$ (resp. $e_i<e_j$, for all $j>i$).
\begin{proposition}\label{rem:baxinvseq} 
An inversion sequence $e=(e_1,\ldots,e_n)$ is a Baxter inversion sequence if and only if 
for every $i$ and $j$, with $i<j$ and $e_i>e_j$, both $e_i$ is a LTR maximum and $e_j$ is a RTL minimum. 
\end{proposition}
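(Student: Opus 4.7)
The plan is to reduce both directions of the equivalence to the pattern-avoidance characterization provided by Proposition~\ref{prop:baxterinv}, namely that $e$ is a Baxter inversion sequence iff it avoids the word patterns $100$, $110$, and $210$. The key observation is that each of these three patterns records a specific way in which a decreasing pair $(e_i, e_j)$ with $i < j$ can be ``witnessed'' by a third entry that blocks $e_i$ from being a LTR maximum or $e_j$ from being a RTL minimum.

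For the forward implication, I would assume $e$ avoids $100, 110, 210$ and pick any $i < j$ with $e_i > e_j$. To show $e_i$ is a LTR maximum, I would argue by contradiction: if some $k < i$ had $e_k \geq e_i$, then the triple at positions $k < i < j$ has values satisfying $e_k \geq e_i > e_j$, which yields an occurrence of $110$ (if $e_k = e_i$) or $210$ (if $e_k > e_i$). Symmetrically, to show $e_j$ is a RTL minimum, I would assume for contradiction that some $k > j$ has $e_k \leq e_j$; then positions $i < j < k$ carry values $e_i > e_j \geq e_k$, producing $100$ (if $e_j = e_k$) or $210$ (if $e_j > e_k$). In either case avoidance is violated.

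For the backward implication, I would argue the contrapositive: if $e$ contains one of the three forbidden patterns at indices $i < j < k$, then the stated property fails. An occurrence of $100$ gives $e_i > e_j = e_k$, so the pair $(i, j)$ violates the property because $e_j$ is not a RTL minimum (witnessed by $e_k$). An occurrence of $110$ gives $e_i = e_j > e_k$, so the pair $(j, k)$ violates the property because $e_j$ is not a LTR maximum (witnessed by $e_i$). An occurrence of $210$ gives $e_i > e_j > e_k$, so the pair $(j, k)$ again fails since $e_j$ is not a LTR maximum (witnessed by $e_i$).

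The main obstacle is essentially bookkeeping: one has to be careful that the pattern $210$ demands strict inequalities while $100$ and $110$ encode the equality cases, so the case splits $e_k = e_i$ versus $e_k > e_i$ (resp.\ $e_j = e_k$ versus $e_j > e_k$) cover exactly the right patterns. No deeper machinery is needed, and no appeal to the generating tree or succession rule of Baxter inversion sequences is required.
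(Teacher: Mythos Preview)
Your proposal is correct and follows exactly the approach the paper indicates: the paper's own proof simply states that both directions are straightforward from the pattern-avoidance characterization of Proposition~\ref{prop:baxterinv}, and your argument spells out precisely those case distinctions. There is nothing to add or correct.
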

\begin{proof}
The proof in both directions is straightforward by considering the characterization of Proposition~\ref{prop:baxterinv}.
\end{proof}

\subsection{Enumerative results}
We choose to report here a proof of~\cite[Lemma 4.3]{KL} (which is omitted in~\cite{KL}). 
This proof is essential in our work, since it displays 
a growth for Baxter inversion sequences that generalizes the one for the family $\mathbf{I}(\geq,\geq,\geq)$ provided in Proposition~\ref{prop:3crosinvseq}.

\begin{proposition}\label{prop:baxinvseq}
Baxter inversion sequences  grow according to the following succession rule
\[\Omega_{Bax}=\left\{\begin{array}{lll}
(1,1)\\
\\
(h,k) &\rightsquigarrow &\hspace{-3mm} (h-1,k+1),\ldots,(1,k+1),\\
&&\hspace{-3mm} (1,k+1),\\
&&\hspace{-3mm} (h+1,k), \dots , (h+k,1). \end{array}\right.\]
\end{proposition}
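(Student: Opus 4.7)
My plan is to follow the template of the proof of Proposition~\ref{prop:3crosinvseq}: I would grow Baxter inversion sequences by appending a new rightmost entry $p\in\{0,\ldots,n\}$ to a Baxter $e=(e_1,\ldots,e_n)$, keeping only those $p$ such that $(e,p)$ is still Baxter (i.e.\ avoids $100$, $110$, $210$ ending at $p$, which by Proposition~\ref{prop:baxterinv} is the only thing to check). To each Baxter $e$ I attach the label $(h,k)$ with $k=n-\max(e)$ the number of \emph{high} additions $p>\max(e)$ and $h$ the number of \emph{low} additions $p\leq\max(e)$ that yield a Baxter sequence. The axiom $e=(0)$ then has label $(1,1)$.

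The key technical step is to show that the set $F(e)$ of forbidden low values is always an interval prefix $\{0,1,\ldots,T\}$, so that $h=\max(e)-T$. For this, I would decompose $F(e)$ according to which pattern would be created: appending $p$ creates a $100$ ending at $p$ iff $p\in P_{100}:=\{e_j:\exists\, i<j,\ e_i>e_j\}$; a $210$ iff $p<e_j$ for some such $j$; and a $110$ iff $p<v$ for some value $v$ repeated in $e$. Writing $M_1=\max P_{100}$ and $M_2=\max\{v:v\text{ appears twice in }e\}$ (with convention $\max\emptyset=-1$), the $210$-forbidden set is automatically the prefix $\{0,\ldots,M_1-1\}$, the $110$-forbidden set is the prefix $\{0,\ldots,M_2-1\}$, and $P_{100}\subseteq\{0,\ldots,M_1\}$ with $M_1\in P_{100}$; hence $F(e)=\{0,\ldots,T\}$ with $T=\max(M_1,M_2-1)$.

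With this in hand, I would compute the children labels by case analysis. \textbf{High additions} $p=\max(e)+j$ for $1\leq j\leq k$: since $p$ exceeds every earlier entry, the sets $P_{100}$, $P_{110}$, $P_{210}$ of $e'=(e,p)$ all coincide with those of $e$, so the label of $e'$ is $(h+j,k+1-j)$; as $j$ ranges over $1,\ldots,k$, this gives exactly the productions $(h+1,k),(h+2,k-1),\ldots,(h+k,1)$. \textbf{Valid low additions} $p$ (so $T<p\leq\max(e)$) split into two sub-cases. If $p<\max(e)$, then $p$ enters $P_{100}(e')$ because $\max(e)>p$, so $M_1(e')=p$; one then checks, using $M_2(e)\leq T+1\leq p$, that the new threshold is $T'=p$, giving label $(\max(e)-p,\,k+1)$. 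As $p$ ranges over $T+1,\ldots,\max(e)-1$, this yields $(h-1,k+1),(h-2,k+1),\ldots,(1,k+1)$. If $p=\max(e)$, then $\max(e)$ becomes a repeated value in $e'$, so $M_2(e')\geq\max(e)$; since $M_1(e')<\max(e)$, one gets $T'=\max(e)-1$ and the extra label $(1,k+1)$. Summing these contributions reproduces exactly the $h$ low productions $(h-1,k+1),\ldots,(1,k+1),(1,k+1)$ of $\Omega_{Bax}$.

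The main obstacle I anticipate is the prefix-interval structure of $F(e)$: a priori one might fear that the union of $P_{100}$, the $210$-set and the $110$-set has gaps, and the key observation that the $210$- and $110$-contributions are \emph{automatically} prefixes while $\max P_{100}\in P_{100}$ is what makes the argument clean. Once this structural lemma is in hand, the case analysis above is routine bookkeeping that directly generalizes the corresponding step for $\mathbf{I}(\geq,\geq,\geq)$; Proposition~\ref{rem:baxinvseq} serves as the underlying structural picture (descents occur only between LTR maxima and RTL minima) that makes this bookkeeping possible.
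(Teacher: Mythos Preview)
Your approach is correct and follows the same overall strategy as the paper: grow Baxter inversion sequences by appending a rightmost entry, label each $e$ by $(h,k)$ with $k=n-\max(e)$ and $h$ the number of admissible values $p\le\max(e)$, and check that the children labels match $\Omega_{Bax}$.

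The organization differs slightly. The paper defines $\mathrm{last}(e)$ as the value of the rightmost non-LTR-maximum entry and then splits into two cases \textbf{(a)}/\textbf{(b)} according to whether that entry forms an inversion or not, assigning $h=\max(e)-\mathrm{last}(e)+1$ in case \textbf{(a)} and $h=\max(e)-\mathrm{last}(e)$ in case \textbf{(b)}; it then verifies the productions separately in each case, relying on Proposition~\ref{rem:baxinvseq}. You instead show directly that the set of forbidden low values is an initial segment $\{0,\dots,T\}$ with the unified threshold $T=\max(M_1,M_2-1)$, and only distinguish $p<\max(e)$ from $p=\max(e)$ when computing children. One can check that your $T$ equals $\mathrm{last}(e)-1$ in the paper's case \textbf{(a)} and $\mathrm{last}(e)$ in case \textbf{(b)}, so the two labelings coincide. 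Your formulation avoids carrying the \textbf{(a)}/\textbf{(b)} dichotomy through the whole argument, at the cost of having to argue the interval structure of $F(e)$; the paper's version makes the connection to the LTR-maximum/RTL-minimum picture more explicit. Either way the bookkeeping is equivalent.
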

\begin{proof}
We show that the growth of Baxter inversion sequences by addition of a new rightmost entry 
(as in the proofs of Propositions~\ref{prop:catinvseq} and~\ref{prop:3crosinvseq}) 
can be encoded by the above succession rule $\Omega_{Bax}$.

As in the proof of Proposition~\ref{prop:3crosinvseq}, let $\mbox{last}(e)$ be the value of the rightmost entry of $e$ which is not a LTR maximum, if there is any. Note that $\mbox{last}(e)$ is also the largest value that is not a LTR maximum, since $e$ avoids $210$ by Proposition~\ref{prop:baxterinv}. Otherwise, if such an entry does not exist, we set $\mbox{last}(e)$ equal to the smallest value of $e$, \emph{i.e.} $\mbox{last}(e):=0$. 

Moreover, if this rightmost entry of $e$ which is not a LTR maximum exists, it can either form an inversion (\emph{i.e.} there exists an entry $e_i$ on its left such that $e_i>\mbox{last}(e)$) or not. 
We need to distinguish two cases in order to define the addition of a new rightmost entry to $e$: 
\begin{itemize}
\item[\textbf{(a)}] in case either all the entries of $e$ are LTR maxima, or the rightmost entry of $e$ which is not a LTR maximum does not form an inversion;
\item[\textbf{(b)}] in case the rightmost entry of $e$ which is not a LTR maximum exists and does form an inversion.
\end{itemize}
Then, according to Proposition~\ref{rem:baxinvseq}, we have that
\begin{itemize}
\item[\textbf{(a)}] The sequence $f=(e_1,\ldots,e_n,p)$ is  a Baxter inversion sequence of length ${n+1}$ if and only if $\mbox{last}(e)\leq p\leq n$. Moreover, if $\mbox{last}(e)\leq p<\max(e)$, where as usual $\max(e)$ is the maximum value of $e$, then $\mbox{last}(f)=p$ and $f$ falls in case {\bf (b)}. Else if $p=\max(e)$, then again $\mbox{last}(f)=p$, yet $f$ falls in case {\bf (a)}. While, if $\max(e)<p\leq n$, $p$ is a LTR maximum of $f$, which thus falls in the same case {\bf (a)} as $e$, and $\mbox{last}(f)=\mbox{last}(e)$.

\item[\textbf{(b)}] The sequence $f=(e_1,\ldots,e_n,p)$ is  a Baxter inversion sequence of length ${n+1}$ if and only if $\mbox{last}(e)<p\leq n$. In particular, if $\mbox{last}(e)<p<\max(e)$, then $\mbox{last}(f)=p$ and $f$ falls in case {\bf (b)}. Else if $p=\max(e)$, then again $\mbox{last}(f)=p$ and $f$ falls in case {\bf (a)}. While, if $\max(e)<p\leq n$, as above $p$ is a LTR maximum of $f$, which thus falls in the same case {\bf (b)} as $e$, and $\mbox{last}(f)=\mbox{last}(e)$.
\end{itemize}

Now, we assign to any Baxter inversion sequence $e$ of length $n$ a label according to the above distinction:
in case {\bf (a)} (resp. {\bf (b)}) we assign the label $(h,k)$, where $h=\max(e)-\mbox{last}(e)+1$ (resp. $h=\max(e)-\mbox{last}(e)$) and $k=n-\max(e)$.

The sequence $e=(0)$ of size one falls in case {\bf (a)}, thus it has label $(1,1)$, which is the axiom of $\Omega_{Bax}$.
Now, let $e$ be a Baxter inversion sequence of length $n$ with label $(h,k)$.
Following the above distinction, the inversion sequences of length $n+1$ produced by adding a rightmost entry $p$ to $e$ have labels:
\begin{itemize}
\item[\textbf{(a)}] $ $
\vspace{-7mm}
\begin{itemize}
\item[$\bullet$] $(h-1,k+1),\ldots,(1,k+1)$, when $p=\mbox{last}(e),\ldots,\max(e)-1$,
\item[$\bullet$] $(1,k+1)$, for $p=\max(e)$,
\item[$\bullet$] $(h + 1,k),(h + 2,k-1),\ldots,(h + k,1)$, when $p=\max(e)+1,\ldots, n$,
\end{itemize}
\item[\textbf{(b)}] $ $
\vspace{-7mm}
\begin{itemize}
\item[$\bullet$] $(h-1,k+1),\ldots,(1,k+1)$, when $p=\mbox{last}(e)+1,\ldots,\max(e)-1$,
\item[$\bullet$] $(1,k+1)$, for $p=\max(e)$,
\item[$\bullet$] $(h + 1,k),(h + 2, k-1),\ldots,(h + k,1)$, when $p=\max(e)+1,\ldots,n$,
\end{itemize}
\end{itemize}
which concludes the proof that Baxter inversion sequences  grow according to $\Omega_{Bax}$.
\end{proof}
The growth of Baxter inversion sequences is depicted in Figure~\ref{fig:bax_growth}.

\begin{figure}[ht]
\begin{center}
\IfFileExists{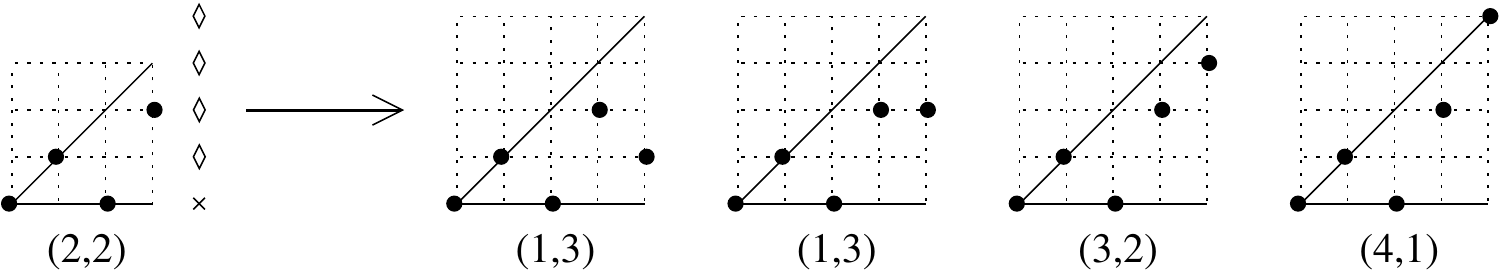}{\includegraphics[width=.75\textwidth]{baxter_growth.pdf}}{MISSING FILE}
\end{center}
\caption{The growth of Baxter inversion sequences.}
\label{fig:bax_growth}
\end{figure}

\section{Semi-Baxter inversion sequences: $\mathbf{I}(\geq,>,-)$}\label{sec:invseq4th}
The hierarchy of Figure~\ref{fig:hierarchy} continues with the family $\mathbf{I}(\geq,>,-)$.
This family of inversion sequences was originally conjectured in~\cite{savage} to be counted by the sequence A117106~\cite{OEIS}. 
The validity of this conjecture follows from~\cite{semibaxterlong} where the numbers of sequence A117106 are named semi-Baxter, hence the name {\em semi-Baxter inversion sequences}. 
We recall the first terms of this enumeration sequence
\[1,2,6,23,104,530,2958,17734,112657, 750726, 5207910, 37387881, 276467208, \ldots\] 

\subsection{Combinatorial characterization}
\begin{proposition}
An inversion sequence is in $\mathbf{I}(\geq,>,-)$ if and only if it avoids $110$ and $210$.
\end{proposition}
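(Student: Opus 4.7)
The plan is to carry out a short direct equivalence by exhausting the cases of the binary relations, exactly in the spirit of Propositions~\ref{prop:catwords}, \ref{prop:3crosscharacterisation}, and~\ref{prop:baxterinv} earlier in the paper. Recall that $\mathbf{I}(\geq,>,-)$ consists of inversion sequences $(e_1,\dots,e_n)$ with no indices $i<j<k$ satisfying $e_i\geq e_j$, $e_j>e_k$, and $e_i-e_k$ (no restriction); thus an inversion sequence fails to lie in $\mathbf{I}(\geq,>,-)$ precisely when such a triple $(i,j,k)$ exists.

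For the forward direction, I would argue by contrapositive: suppose $e$ contains $110$ or $210$. If there is a $110$-occurrence at indices $i<j<k$ then $e_i=e_j>e_k$, which trivially satisfies $e_i\geq e_j$ and $e_j>e_k$; similarly, a $210$-occurrence yields $e_i>e_j>e_k$, hence $e_i\geq e_j$ and $e_j>e_k$. In either case, $e\notin\mathbf{I}(\geq,>,-)$.

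For the reverse direction, suppose $e\notin\mathbf{I}(\geq,>,-)$, so there exist $i<j<k$ with $e_i\geq e_j$ and $e_j>e_k$. Split on the two possibilities for the relation $e_i\geq e_j$: if $e_i=e_j$, then $e_i=e_j>e_k$ and the triple $(i,j,k)$ realizes the pattern $110$; if $e_i>e_j$, then combined with $e_j>e_k$ (which transitively also forces $e_i>e_k$), the triple realizes the pattern $210$. Either way, $e$ contains one of the two forbidden patterns.

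There is no real obstacle here: the only subtlety is noting that the dash ``$-$'' between $e_i$ and $e_k$ imposes no condition, so the defining triples of $\mathbf{I}(\geq,>,-)$ partition exactly into the $110$-triples (the $e_i=e_j$ case) and the $210$-triples (the $e_i>e_j$ case, where $e_i>e_k$ is automatic by transitivity). I would therefore write the proof as a one-line remark analogous to the previous combinatorial characterisations in the paper, e.g.\ ``The statement follows by case analysis on the relation $e_i\geq e_j$, as in Propositions~\ref{prop:catwords} and~\ref{prop:baxterinv}.''
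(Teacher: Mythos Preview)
Your proof is correct and is exactly the elementary case analysis the paper has in mind; in fact, the paper omits the proof entirely, simply stating that it is elementary and analogous to Propositions~\ref{prop:catwords} and~\ref{prop:3crosscharacterisation}. Your write-up faithfully fills in those details.
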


The proof of the above statement is elementary, and we omit it. Yet it shows clearly that semi-Baxter inversion sequences avoid only two of the three patterns avoided by the Baxter inversion sequences (see Proposition~\ref{prop:baxterinv} for a comparison). 

Moreover, the following characterization is an extension of that provided in Proposition~\ref{rem:baxinvseq} for the family $\mathbf{I}(\geq,\geq,>)$.
Recall that for $e=(e_1,\ldots,e_n)$ an inversion sequence, 
we call an entry $e_i$ a LTR maximum if $e_i>e_j$, for all $j<i$, and we say that $e_i$ and $e_j$ form an inversion if $i<j$ and $e_i>e_j$.

\begin{proposition}\label{rem:sbaxinvseq} 
An inversion sequence $e=(e_1,\ldots,e_n)$ is in $\mathbf{I}(\geq,>,-)$ if and only if 
for every $e_i$ and $e_j$ that form an inversion, $e_i$ is a LTR maximum. 
\end{proposition}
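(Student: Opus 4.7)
The plan is a direct equivalence proof using the pattern characterization of $\mathbf{I}(\geq,>,-)$ just stated, namely that a sequence lies in this family iff it avoids both $110$ and $210$. Throughout, an inversion is a pair $(e_i,e_j)$ with $i<j$ and $e_i>e_j$.

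For the forward direction, I assume $e$ avoids $110$ and $210$, take an inversion $(e_i,e_j)$, and argue by contradiction that $e_i$ must be a LTR maximum. If it is not, there exists $h<i$ with $e_h\geq e_i$. Combined with $e_i>e_j$, the triple of indices $h<i<j$ has $e_h\geq e_i>e_j$. Splitting on whether $e_h=e_i$ or $e_h>e_i$, this is either an occurrence of $110$ or of $210$, contradicting the hypothesis.

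For the backward direction, I assume that in every inversion $(e_i,e_j)$ the entry $e_i$ is a LTR maximum, and show that no occurrence of $110$ or $210$ can appear. Suppose $h<i<j$ realizes such an occurrence, i.e.\ $e_h\geq e_i>e_j$. Then $(e_i,e_j)$ is an inversion, so by hypothesis $e_i$ is a LTR maximum, meaning in particular $e_i>e_h$; this contradicts $e_h\geq e_i$.

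Neither direction presents a real obstacle: the whole proof is essentially a repackaging of the two forbidden patterns $110$ and $210$ as the statement ``the left endpoint of any inversion has no weakly larger entry to its left''. The only thing worth checking carefully is that the $-$ in the relation triple $(\geq,>,-)$ places no constraint between $e_i$ and $e_k$ in the pattern definition, so the characterization collapses exactly to forbidding the patterns $110$ and $210$, which is exactly the previous unproved proposition we rely on.
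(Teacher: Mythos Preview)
Your proof is correct and follows exactly the approach the paper indicates: the paper's own proof is essentially omitted, stating only that the result ``follows immediately by considering that $e$ is an inversion sequence of $\mathbf{I}(\geq,>,-)$ if and only if it avoids $110$ and $210$''. You have simply written out the details of that immediate argument.
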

\begin{proof}
Similarly to the proof of Proposition~\ref{rem:baxinvseq}, the above statement follows immediately by considering that $e$ is an inversion sequence of $\mathbf{I}(\geq,>,-)$ if and only if it avoids $110$ and $210$.
\end{proof}

\subsection{Enumerative results}

For the sake of completeness, we choose to report here a direct proof of the fact that the family $\mathbf{I}(\geq,>,-)$ can be generated by the rule $\Omega_{semi}$ associated with semi-Baxter numbers. 
It allows us to see that the growth of the family $\mathbf{I}(\geq,\geq,>)$ in the proof of  Proposition~\ref{prop:baxinvseq} can be easily generalized to a growth for the family $\mathbf{I}(\geq,>,-)$. 

As proved in~\cite[Section 3]{semibaxterlong}, once Proposition~\ref{prop:baxinvseq} has been established, 
the enumeration of the family $\mathbf{I}(\geq,>,-)$ is obtained by applying the obstinate kernel method as discussed for the family $\mathbf{I}(\geq,\geq,\geq)$. 

\begin{proposition}\label{prop:finalinvseq}
The family $\mathbf{I}(\geq,>,-)$ grows according to the following succession rule
\[\Omega_{semi}=\left\{\begin{array}{lll}
(1,1)\\
\\
(h,k) &\rightsquigarrow &\hspace{-3mm} (h,k+1),\ldots,(1,k+1),\\
&&\hspace{-3mm} (h+1,k), \dots , (h+k,1). \end{array}\right.\]
\end{proposition}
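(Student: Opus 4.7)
The plan is to mimic the proof of Proposition~\ref{prop:baxinvseq} and grow $\mathbf{I}(\geq,>,-)$ by appending a new rightmost entry. Since the pattern $100$ is no longer forbidden, the case distinction (a)/(b) that was needed for Baxter inversion sequences disappears, and the bookkeeping becomes slightly simpler rather than more complicated.

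First I would establish the following structural observation, which is the key technical ingredient. In any $e\in\mathbf{I}(\geq,>,-)$, the subsequence of non-LTR-maxima (i.e.~of entries $e_j$ for which there exists $i<j$ with $e_i\geq e_j$) is weakly increasing. Indeed, if $e_j,e_{j'}$ were non-LTR-maxima with $j<j'$ and $e_j>e_{j'}$, then picking a witness $e_i$ with $i<j$ and $e_i\ge e_j$ would produce $e_i\ge e_j>e_{j'}$, i.e.~a forbidden occurrence of $110$ or $210$, contradicting Proposition~\ref{rem:sbaxinvseq}. In particular the rightmost non-LTR-maximum of $e$ is also its largest one; I denote this common value by $\mbox{last}(e)$, with the convention $\mbox{last}(e):=0$ when $e$ has no non-LTR-maximum. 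Write also $\max(e)$ for the maximum entry of $e$.

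Next I would pin down the admissible additions. Since the new entry $p$ in $f=(e_1,\ldots,e_n,p)$ can create a $110$ or $210$ occurrence only by playing the role of the final (smallest) element, $f\in\mathbf{I}(\geq,>,-)$ iff there are no indices $i<j\le n$ with $e_i\ge e_j>p$; by definition of $\mbox{last}(e)$, this collapses to the single inequality $\mbox{last}(e)\le p\le n$, giving exactly $n+1-\mbox{last}(e)$ valid choices for $p$. I would then label any $e$ of length $n$ by $(h,k)$ with $h=\max(e)-\mbox{last}(e)+1$ and $k=n-\max(e)$, so that $h$ counts the valid $p\in[\mbox{last}(e),\max(e)]$ and $k$ counts the valid $p\in[\max(e)+1,n]$. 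The sequence $e=(0)$ then receives the label $(1,1)$, matching the axiom of $\Omega_{semi}$.

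Finally I would verify the productions by splitting on $p$. For $\mbox{last}(e)\le p\le \max(e)$, we have $\max(f)=\max(e)$ and $p$ becomes the new rightmost non-LTR-maximum of $f$, so $\mbox{last}(f)=p$; as $p$ runs from $\mbox{last}(e)$ up to $\max(e)$ this produces the labels $(h,k+1),(h-1,k+1),\ldots,(1,k+1)$. For $\max(e)<p\le n$, $p$ is a fresh LTR-maximum of $f$, so $\mbox{last}(f)=\mbox{last}(e)$ and $\max(f)=p$; this produces the labels $(h+1,k),(h+2,k-1),\ldots,(h+k,1)$. Together these match $\Omega_{semi}$ exactly. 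The only part requiring real thought — and therefore the main obstacle — is the weak monotonicity of the non-LTR-maxima: once that is in hand, both the admissibility criterion $p\ge\mbox{last}(e)$ and the label bookkeeping fall out immediately, in close parallel to (but simpler than) the corresponding steps in Propositions~\ref{prop:3crosinvseq} and~\ref{prop:baxinvseq}.
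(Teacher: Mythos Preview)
Your proof is correct and follows essentially the same route as the paper: grow by appending a rightmost entry, set $\mbox{last}(e)$ to be the value of the rightmost non-LTR-maximum (or $0$), label by $(h,k)=(\max(e)-\mbox{last}(e)+1,\,n-\max(e))$, and check the two ranges of~$p$. The one place where you are more explicit than the paper is the lemma that the non-LTR-maxima form a weakly increasing subsequence, hence $\mbox{last}(e)$ is simultaneously the rightmost and the largest non-LTR-maximum; the paper uses this fact implicitly (it was stated in the Baxter proof and carried over without comment), so your added sentence is a welcome clarification rather than a departure.
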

\begin{proof}
As previously, we define a growth for the family $\mathbf{I}(\geq,>,-)$ according to $\Omega_{semi}$ by adding a new rightmost entry. 

As in the proof of Proposition~\ref{prop:baxinvseq}, let $\mbox{last}(e)$ be the value of the rightmost entry of $e$ which is not a LTR maximum, if there is any. Otherwise, $\mbox{last}(e):=0$. 
Note that differently from Proposition~\ref{prop:baxinvseq}, here we do not need to distinguish cases depending on whether or not the rightmost entry of $e$ not being a LTR maximum forms an inversion. 

According to Proposition~\ref{rem:sbaxinvseq},
it follows that $f =(e_1,\ldots,e_n,p)$ is an inversion sequence of $\mathbf{I}_{n+1}(\geq,>,-)$ if and only if $\mbox{last}(e)\leq p\leq n$. Moreover, if $\mbox{last}(e)\leq p\leq\max(e)$, where as usual $\max(e)$ is the maximum value of $e$, then $\mbox{last}(f)=p$; if $\max(e)<p\leq n$, then $\mbox{last}(f)=\mbox{last}(e)$, since $p$ is a LTR maximum.

Now, we assign to any $e\in\mathbf{I}_n(\geq,>,-)$ the label $(h,k)$, where $h=\max(e)-\mbox{last}(e)+1$ and $k=n-\max(e)$.

The sequence $e=(0)$ of size one has label $(1,1)$, which is the axiom of $\Omega_{semi}$, since $\mbox{last}(e)=0$.
Let $e$ be an inversion sequence of $\mathbf{I}_n(\geq,>,-)$ with label $(h,k)$.
The labels of the inversion sequences of $\mathbf{I}_{n+1}(\geq,>,-)$ produced adding a rightmost entry $p$ to $e$ are
\begin{itemize}
\item $(h,k+1),\ldots,(1,k+1)$, when $p=\mbox{last}(e),\ldots,\max(e)$,
\item $(h + 1,k),(h + 2, k-1),\ldots,(h + k,1)$, when $p=\max(e)+1,\ldots, n$,
\end{itemize}
which concludes the proof that $\mathbf{I}(\geq,>,-)$ grows according to $\Omega_{semi}$.
\end{proof}
The growth semi-Baxter inversion sequences is depicted in Figure~\ref{fig:semibax_growth}.

\begin{figure}[ht]
\begin{center}
\IfFileExists{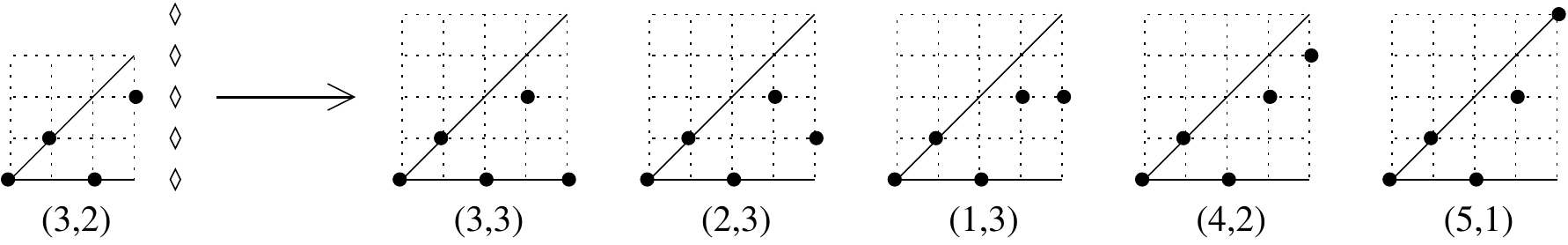}{\includegraphics[width=.9\textwidth]{semi_baxter_growth.pdf}}{MISSING FILE}
\end{center}
\caption{The growth of semi-Baxter inversion sequences.}
\label{fig:semibax_growth}
\end{figure}

\section{Powered Catalan inversion sequences: $\mathbf{I}(=,>,>)$}\label{sec:callaninvseq}

The family of inversion sequences $\mathbf{I}(=,>,>)$ is the last element of the chain in Figure~\ref{fig:hierarchy}. 
These objects that can be characterized by the avoidance of $110$ are completely enumerated in~\cite[Theorem 13]{corteel}, and are called {\em powered Catalan inversion sequences} in the following.
The associated enumerative number sequence (of \emph{powered Catalan numbers}) is registered on~\cite{OEIS} as sequence A113227, which also counts permutations avoiding $\callanpat$. 
Its first terms are
\[1, 1, 2, 6, 23, 105, 549, 3207, 20577, 143239, 1071704, 8555388, 72442465,647479819,\ldots\]

Enumerating permutations avoiding $\callanpat$ 
(or rather increasing ordered trees with increasing leaves, with whom they are in bijective correspondence), 
Callan~\cite{callan} proved that the $n$th term of the sequence A113227 can be obtained as $p_n=\sum_{k=0}^nc_{n,k}$, where $c_{n,k}$ is recursively defined by

\begin{equation}\label{eq:reccallan}
\begin{cases}
c_{0,0}=1,&\\
c_{n,0}=0,&\text{for }n\geq1\\
c_{n,k}=c_{n-1,k-1}+k\sum_{j=k}^{n-1} c_{n-1,j},&\text{for }n\geq1,\mbox{ and }1\leq k\leq n\,.
\end{cases}
\end{equation}
%

\medskip

The proof that $\mathbf{I}(=,>,>)$ is enumerated by sequence A113227 of \cite{OEIS} uses the above recurrence, which characterizes this sequence. 
Namely, Corteel \emph{et al} proved the following. 

\begin{proposition}[\cite{corteel}, Theorem 13]\label{prop:110enum}
For $n\geq1$ and $0\leq k\leq n$, the number of powered Catalan inversion sequences having $k$ zeros is given by the term $c_{n,k}$ of Equation~\eqref{eq:reccallan}. 
Thus, the number of powered Catalan inversion sequences of length $n$ is $p_n$, for every $n\geq1$.
\end{proposition}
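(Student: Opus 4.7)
My plan is to prove the refinement $c_{n,k} = |\{e \in \mathbf{I}_n(=,>,>) : e \text{ has exactly } k \text{ zeros}\}|$ by induction on $n$, deriving the recurrence directly from a recursive decomposition of powered Catalan inversion sequences. The base and boundary cases are immediate: $c_{0,0}=1$, and $c_{n,0}=0$ for $n \geq 1$ because $e_1 = 0$ in every inversion sequence. For the inductive step, the goal is to explicitly relate length-$n$ sequences (with $k$ zeros) to length-$(n-1)$ sequences, producing the two summands of \eqref{eq:reccallan}.

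The key reformulation I would rely on is the following: introduce the running statistic $M_i := \max\{v > 0 : v \text{ appears at least twice in } e_1, \ldots, e_i\}$, with convention $M_i := 0$ if no such $v$ exists. A sequence $e$ lies in $\mathbf{I}_n(=,>,>)$ iff $e_{i+1} \geq M_i$ for every $i < n$, since an occurrence of $110$ is exactly a repeated positive value followed later by a strictly smaller entry. Using this, I would partition $\{e \in \mathbf{I}_n(=,>,>) : e \text{ has } k \text{ zeros}\}$ into two classes matching the two summands. The \emph{type-A} class consists of sequences for which a canonical deletion (of a distinguished entry, e.g.\ the rightmost zero, together with a suitable re-indexing) decreases the zero count by one; this class should biject with length-$(n-1)$ sequences with $k-1$ zeros, yielding the $c_{n-1,k-1}$ term. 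The \emph{type-B} class consists of those $e$ for which the same deletion produces a length-$(n-1)$ sequence with $j \geq k$ zeros; here I would exhibit a $k$-to-$1$ map, where the $k$ preimages of a fixed target $e'$ are indexed by a choice among the $k$ zeros of the source $e$, accounting for the factor $k$.

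I expect the main obstacle to be the bookkeeping for the type-B map: given $e' \in \mathbf{I}_{n-1}(=,>,>)$ with $j \geq k$ zeros, one must prove that exactly $k$ distinct $110$-avoiding extensions of length $n$ with $k$ zeros project to $e'$ under the deletion — not $j$, not $k+1$. Showing this pins down the $k$ preimages requires a case analysis combining the inversion sequence constraint $e_i < i$ (which restricts where an entry can be reinserted after a shift), the threshold inequality $e_{i+1} \geq M_i$, and the requirement that the reinserted entry interacts with the $k$ zeros of $e'$ in exactly the right way. A cleaner alternative would be to establish a size-preserving bijection between $\mathbf{I}(=,>,>)$ and Callan's family of increasing ordered trees with increasing leaves, for which \eqref{eq:reccallan} is already derived in~\cite{callan}; this would trade the bookkeeping above for the task of constructing and verifying the bijection, which may turn out to be the more tractable technical problem.
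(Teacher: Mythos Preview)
Your outline is right in spirit—exhibit a parent map from length-$n$ to length-$(n-1)$ powered Catalan inversion sequences whose fibers realize the recurrence—and your $M_i$ reformulation of $110$-avoidance is correct. But the concrete map is the entire content of the proof, and your proposal does not supply one. The suggestion ``delete the rightmost zero with suitable re-indexing'' does not work as stated: deleting position $j$ and shifting a later entry $e_i$ to position $i-1$ may violate $e_i < i-1$ (take $e=(0,1)$), and no local patch fixes this uniformly. More seriously, your description of the type-B fibers is circular (the $k$ preimages of a fixed $e'$ ``indexed by a choice among the $k$ zeros of the source $e$''), and you yourself flag this bookkeeping as the unresolved obstacle. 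As written this is a plan, not a proof.

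The paper cites the statement from~\cite{corteel} without proving it directly, but its proof of the succession-rule reformulation (Proposition~\ref{prop:pocatrule}) supplies exactly the parent map you are missing, and it is \emph{global} rather than a local deletion. Given $e \in \mathbf{I}_n(=,>,>)$, replace every entry equal to $1$ by $0$, delete the leftmost entry (necessarily $0$), and decrement every remaining positive entry by $1$; this yields an element of $\mathbf{I}_{n-1}(=,>,>)$. If $e$ has $k$ zeros and $m$ ones, the parent has $j=k+m-1$ zeros. The case $m=0$ gives the bijection with length-$(n-1)$ sequences having $k-1$ zeros, accounting for $c_{n-1,k-1}$. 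When $m\geq 1$ (so $j\geq k$), the $k$ preimages of a given parent are built by incrementing its positive entries, prepending a $0$ (giving $j+1$ zeros), and then turning into $1$ all zeros beyond the $k$th non-initial one \emph{together with} exactly one of the first $k$ non-initial zeros; this last choice is what produces the factor $k$. The idea your proposal lacks is that the right map manipulates the $\{0,1\}$-layer of the sequence globally; once you have it, the type-B case is immediate and no case analysis on $M_i$ is needed.
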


Proposition~\ref{prop:110enum} can be rephrased in terms of succession rules, as done below with the rule $\Omega_{pCat}$.
More precisely, for $n\geq1$ and $k\geq1$, the number of nodes at level $n$ that carry the label $(k)$ in the generating tree associated with $\Omega_{pCat}$ 
is precisely the quantity $c_{n,k}$ given by Equation~\eqref{eq:reccallan}. 

\begin{proposition}\label{prop:pocatrule}
The family of powered Catalan inversion sequences grows according to the following succession rule
\[\Omega_{pCat}=\left\{\begin{array}{lll}
(1)\\
\\
(k) &\rightsquigarrow &\hspace{-3mm} (1),(2)^2,(3)^3,\ldots,(k)^k,(k+1). \end{array}\right.\]
\end{proposition}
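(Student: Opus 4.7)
The plan is to reduce Proposition~\ref{prop:pocatrule} directly to Proposition~\ref{prop:110enum} by showing that $\Omega_{pCat}$ is nothing more than a rephrasing, in the language of succession rules, of the recurrence~\eqref{eq:reccallan} defining $c_{n,k}$. Since Corteel et al.\ already established that the powered Catalan inversion sequences of length $n$ with exactly $k$ zeros are counted by $c_{n,k}$, matching this recurrence at the level of generating-tree labels will be enough.

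Concretely, I would introduce $N_{n,k}$, the number of nodes at level $n$ of the generating tree of $\Omega_{pCat}$ carrying label $(k)$. Reading off the productions of $\Omega_{pCat}$, the label $(k)$ at level $n$ arises in exactly two ways: either as the unique rightmost production $(k)$ of a parent with label $(k-1)$, contributing $N_{n-1,k-1}$, or as one of the $k$ copies inside the block $(k)^k$ in the production of a parent with label $(j)$ for some $j \ge k$, contributing $k\cdot N_{n-1,j}$ for each such $j$. Adding these two contributions gives
\[
N_{n,k} \;=\; N_{n-1,k-1} \;+\; k\sum_{j\ge k} N_{n-1,j},
\]
together with initial conditions $N_{1,1}=1$ (from the axiom) and $N_{n,0}=0$ for $n\ge 1$ (no zero label ever appears in the tree).

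Next, I would simply observe that this recurrence and these initial conditions coincide with those of Equation~\eqref{eq:reccallan}, so $N_{n,k}=c_{n,k}$ for all $n\ge 1$, $k\ge 0$. Summing over $k$, the total number of nodes at level $n$ equals $\sum_k c_{n,k}=p_n$, which by Proposition~\ref{prop:110enum} is exactly the number of powered Catalan inversion sequences of length $n$. This is the content of Proposition~\ref{prop:pocatrule}.

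There is no genuinely hard step here: the proof is bookkeeping that matches two recurrences. The statement is best read as an equivalence between two formal descriptions of the same integer sequence rather than as an explicit combinatorial growth of inversion sequences. I would not expect to produce a natural parent-to-child operation on $\mathbf{I}(=,>,>)$ itself realizing $\Omega_{pCat}$ at the level of objects; the paper's own comments on the breakdown of the hierarchy between $\mathbf{I}(\geq,>,-)$ and $\mathbf{I}(=,>,>)$ suggest that such a growth of the inversion sequences is exactly what is missing, and that $\Omega_{pCat}$ will instead be realized combinatorially by the other families — permutations avoiding $\callanpat$, steady paths, valley-marked Dyck paths — studied in the subsequent sections.
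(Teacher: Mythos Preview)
Your recurrence-matching argument is correct as far as it goes: the rule $\Omega_{pCat}$ does reproduce the array $c_{n,k}$, and via Proposition~\ref{prop:110enum} this shows that the number of nodes at level $n$ with label $(k)$ equals the number of powered Catalan inversion sequences of length $n$ with $k$ zeros. As a purely enumerative statement, this is fine.

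However, the paper takes a different and stronger route. In this paper, ``grows according to'' always means that an explicit generating tree is exhibited on the objects themselves: a parent map on $\mathbf{I}(=,>,>)$ together with a labelling, such that the children of an object with label $(k)$ are precisely $(1),(2)^2,\ldots,(k)^k,(k+1)$. The paper's proof does exactly this. Given $e$ with $k$ zeros, one increases every nonzero entry by one, prepends a new $0$, and then, for each $1\le j\le k+1$, replaces a suitable subset of the old zeros by ones so as to obtain inversion sequences with exactly $j$ zeros (one such sequence for $j=1$ and $j=k+1$, and $j$ distinct ones for $1<j<k+1$). The inverse (replace all $1$'s by $0$'s, delete the leftmost $0$, decrease positive entries by one) shows this is a genuine growth. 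Your argument gives only the label distribution at each level; it does not supply the parent--child map.

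Your final paragraph is also mistaken on two points. First, the ``breakdown of the hierarchy'' the paper refers to is not the absence of a growth on $\mathbf{I}(=,>,>)$ realizing $\Omega_{pCat}$; it is the absence of a growth by \emph{appending a rightmost entry} that would generalize the semi-Baxter construction. The growth just described exists, but it works by inserting a leftmost entry and globally shifting values. Second, permutations avoiding $\callanpat$ are \emph{not} shown to grow according to $\Omega_{pCat}$ in the sequel; they are given their own rule $\Omega_{\callanpat}$, and the families naturally associated with $\Omega_{pCat}$ are valley-marked Dyck paths and increasing ordered trees with increasing leaves.
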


We notice that $\Omega_{pCat}$ is extremely similar to the Catalan succession rule $\Omega_{Cat}$ (see page~\pageref{page:OmegaCat}): specifically, the productions of $\Omega_{pCat}$ are the same as in $\Omega_{Cat}$, but with multiplicities appearing as ``powers''. Hence, the name \emph{powered Catalan}. 

\begin{proof}[Proof of Proposition~\ref{prop:pocatrule}]
We prove the above statement by showing a growth for the family of powered Catalan inversion sequences. 
Let $e=(e_1,\ldots,e_n)$ be a powered Catalan inversion sequence of length $n$. Suppose that $e$ has $k$ entries equal to $0$, and let $i_1,\ldots,i_k$ be their indices. Since $e$ avoids $110$, there could be at most one single entry equal to $1$ among $e_1,\ldots,e_{i_k}$.
We define a growth that changes the number of $0$'s and $1$'s entries of $e$, as follows.
\begin{itemize}
\item[a)] First, increase by one all the entries of $e$ that are greater than $0$; namely $e'=(e_1',\ldots,e_n')$, where $e_i'=e_i$, if $i=i_1,\ldots,i_k$, otherwise $e_i'=e_i+1$. Note that $e_1'=e_1=0$, and that $e'$ does not have to be an inversion sequence.
\item[b)] Then, insert a new leftmost $0$ entry; namely $e''=(0,e_1',\ldots,e_n')$. Note that $e''$ is an inversion sequence of size $n+1$, and moreover it has $k+1$ entries equal to $0$ at positions $1,i_1+1,\ldots,i_k+1$ and no entries equal to $1$.
\item[c)] Build the following inversion sequences, starting from $e''$.
\begin{itemize}[leftmargin=1cm]
\item[\textbf{(1)}] Replace all the zeros at positions $i_1+1,i_2+1,\ldots,i_k+1$ by $1$; namely $e^{(1)}=(0,e^*_1,\ldots,e^*_n)$, where $e^*_i=e_i+1$, for all $i$. Note that $e^{(1)}$ has only one entry equal to $0$, and exactly $k$ entries equal to $1$.
\item[\textbf{(j)}] For all $1<j<k+1$, replace all the zeros at positions $i_{j+1}+1,\ldots,i_k+1$ by $1$, and furthermore replace by $1$ only one zero entry among those at indices $i_1+1,\ldots,i_j+1$. 
There are thus $j$ different inversion sequences $e^{(m)}=(0,e^*_1,\ldots,e^*_n)$, with $1\leq m\leq j$, such that $e^*_i=e_i+1$ except for the indices $i_1+1,\ldots,i_{m-1}+1,i_{m+1}+1,\ldots,i_j+1$. 
Note that $e^{(m)}$ has exactly $j$ entries equal to $0$, for any $1\leq m\leq j$, and $k+1-j$ entries equal to $1$.
\item[\textbf{(k+1)}] Set $e^{(k+1)}=e''$.
\end{itemize}
\end{itemize}
Note that all the inversion sequences of size $n+1$ produced at step c) avoid $110$, since the initial inversion sequence $e$ avoids $110$, 
and the modifications performed in steps a) - b) - c) may result in at most one $1$ to the left of the rightmost $0$. 
Thus, in each of the above cases we build a powered Catalan inversion sequence of length $n+1$.

Moreover, given any powered Catalan inversion sequence $f$ of length $n+1$, it is easy to retrieve the unique inversion sequence $e$ of length $n$ that produces $f$ according to the operations of c).
Indeed, it is sufficient to replace all the entries equal to $1$ by $0$, remove the leftmost $0$ entry, and finally decrease by one all the entries greater than $0$. (This procedure is a) - b) - c) backwards.) 
In addition, the operation of c) which generates $f$ is uniquely determined by the number and relative positions of the $0$ and $1$ entries of $f$. 

Finally, we label a powered Catalan inversion sequence $e$ of length $n$ with $(k)$, where $k$ is its number of $0$ entries. It is straightforward, and the above itemized list suggests it, that the powered Catalan inversion sequences produced by $e$ following the construction at step c) have  labels $(1),(2)^2,(3)^3,\ldots,(k)^k,(k+1)$.
\end{proof}

The sequence of powered Catalan numbers proves to be extremely rich with combinatorial interpretations, 
and quite a few enumerative problems associated with it are open, or beg for a more natural proof. 
We collect some of them in the remainder of this article, 
and solve a few by providing bijections between families enumerated by the powered Catalan numbers. 

\section{Powered Catalan numbers}\label{sec:callan}

Recall from the previous section that the sequence of powered Catalan number $(p_n)$ (A113227 on~\cite{OEIS}) 
is defined by $p_n=\sum_{k=0}^nc_{n,k}$, where the term $c_{n,k}$ is recursively defined by Equation~\eqref{eq:reccallan}. 
To our knowledge, there is no information about the ordinary generating function $F_{pCat}(x)=\sum_{n\geq0}p_nx^n$. 
On the contrary, the exponential generating function $E_{pCat}(x)=\sum_{n\geq0}p_nx^n/n!$ has been studied in~\cite{1234sergi}, as well as in~\cite{callan}, 
where by means of the recurrence~\eqref{eq:reccallan} a refined version of this exponential generating function is provided.

\subsection{Known combinatorial structures enumerated by the powered Catalan numbers}\label{sec:mvpaths}

\begin{definition}
A \emph{valley-marked Dyck path} (see Figure~\ref{fig:VMdyck}) of semi-length $n$ is a Dyck path $P$ of length $2n$ in which, for each valley (\emph{i.e.} $DU$ factor), one of the lattice points between the valley vertex and the $x$-axis is marked. 
In other words, if $(i,k)$ pinpoints any valley of $P$, then a valley-marked Dyck path associated with $P$ must have a mark in a point $(i,j)$, where $0\leq j\leq k$. 
If $j=0$, we say that the valley has a \emph{trivial mark}.
\end{definition}

In addition, we say that a \emph{return to the mark} of a valley-marked Dyck path is any valley whose mark is on the valley itself. 
Note that returns to the $x$-axis are a special case of return to the mark. 
(Here and everywhere, when speaking of return to the $x$-axis, we do not include the starting nor the ending point of the path.) 

We also define the \emph{total mark} of a valley-marked Dyck path as the sum of the heights of the marks. 

\begin{figure}[ht!]
\centering
\IfFileExists{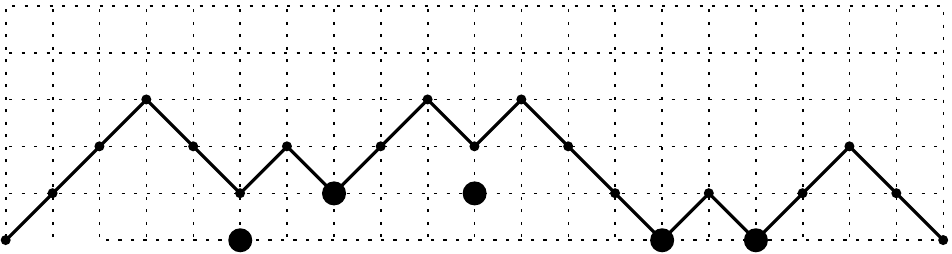}{\includegraphics[width=0.6\textwidth]{VMdyck.pdf}}{MISSING FILE}
\caption{A valley-marked Dyck path.}
\label{fig:VMdyck}
\end{figure}

Valley-marked Dyck paths are enumerated by powered Catalan number according to their semi-length. More precisely, 
the number of valley-marked Dyck paths of semi-length $n$ having $k$ down steps in the last descent 
(or symmetrically, $k$ up steps on the main diagonal (of equation $x=y$)) is given by 
the term $c_{n,k}$ of Equation~\eqref{eq:reccallan}, for every $n\geq0$ and $0\leq k\leq n$ (\cite[Section 7]{callan}). 

\paragraph{Increasing ordered trees}
Another family of objects counted by sequence A113227 is one of labeled ordered trees~\cite{callan}.
\begin{definition}
An \emph{increasing ordered tree} of size $n$ is a plane tree with $n+1$ labeled vertices, the standard label set being $\{0,1,2,\ldots,n\}$, such that each
child exceeds its parent.
An increasing ordered tree {\em has increasing leaves} if its leaves, taken in pre-order, are increasing.
\end{definition}
Figure~\ref{fig:IOT} shows two increasing ordered trees, the first has increasing leaves, while the second does not.
\begin{figure}[ht!]
\centering
\IfFileExists{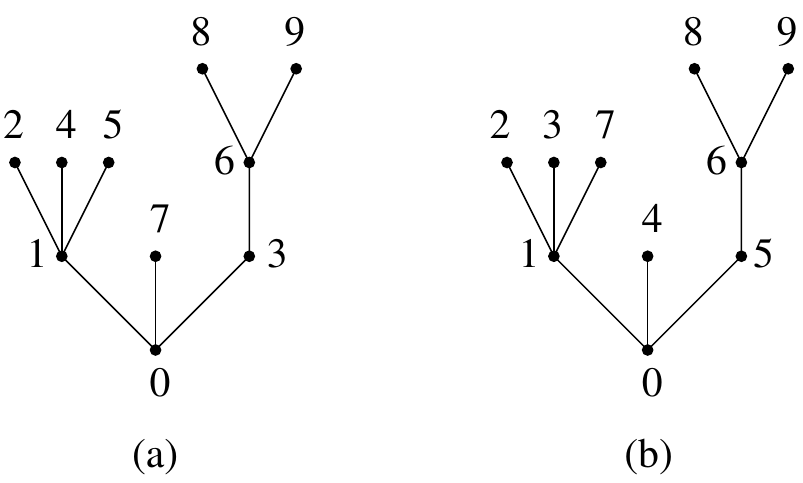}{\includegraphics[width=0.45\textwidth]{IOT.pdf}\vspace{-4mm}}{MISSING FILE}
\caption{Two increasing ordered trees: (a) with increasing leaves; (b) with non-increasing leaves.}
\label{fig:IOT}
\end{figure}

The number of increasing ordered trees of size $n$ is given by the odd double factorial $(2n-1)!!$~\cite{callan2}. If we require the additional constraint of having increasing leaves, then the number of these increasing ordered trees of size $n$ results to be the $n$th powered Catalan number~\cite{callan}. More precisely, the number of increasing ordered trees with increasing leaves of size $n$ and root degree $k$ is given by the number $c_{n,k}$ of Equation~\eqref{eq:reccallan}.

\begin{remark}
It is not hard to prove (and details are left to the reader) that valley-marked Dyck paths and increasing ordered trees with increasing leaves have a growth according to $\Omega_{pCat}$.

Specifically, we assign to any valley-marked Dyck path $P$ a label $(k)$, 
where $k$ is the number of down steps in the last descent. 
A growth according to $\Omega_{pCat}$ is thus defined by adding a new rightmost peak (\emph{i.e.} UD factor) in any point of $P$'s last descent, 
and (if a new rightmost valley is generated) by marking a lattice point between the new rightmost valley and the $x$-axis. 
Turning to increasing ordered trees with increasing leaves, 
the label of such a tree is $(k)$, where $k$ is the root degree. 
A growth for these trees according to $\Omega_{pCat}$ is defined as follows: 
first increase each vertex label $\ell>0$ by one; 
then select any (possibly empty) bunch of contiguous root edges; 
and finally insert a new vertex with label $1$ that is child of the root and parent of the selected bunch of edges.
\end{remark}

\paragraph{Pattern-avoiding permutations}
Some families of pattern-avoiding permutations are known to be counted by the sequence of powered Catalan numbers.
Indeed, the sequence A113227 is actually registered on~\cite{OEIS} as the enumerative sequence of permutations avoiding the generalized pattern $\callanpat$. 

The family $AV(\callanpat)$ has been enumerated by Callan in~\cite{callan}, providing a bijection between $AV(\callanpat)$ and increasing ordered trees with increasing leaves. 

\smallskip

Furthermore, in~\cite{larabaxter}, some other families of pattern-avoiding permutations are presented as related to the sequence A113227~\cite{OEIS}. 
In particular, in~\cite{larabaxter} and subsequent papers, the families  $AV(1\text{-}32\text{-}4)$, and  $AV(\pat)$, and $AV(1\text{-}43\text{-}2)$ are proved to be equinumerous to permutations avoiding $\callanpat$.
It has been conjectured in~\cite{shattuck} that also the family $AV(23\text{-}1\text{-}4)$ is equinumerous to $AV(\callanpat)$. 
We attempted to prove this conjecture by defining a growth for the family $AV(23\text{-}1\text{-}4)$ according to $\Omega_{pCat}$. 
Although our attempts were not successful, they lead us to refine this conjecture as follows.

\begin{con}\label{con:shattuck}
The number of permutations of $AV_n(23\text{-}1\text{-}4)$ with $k$ RTL minima is given by $c_{n,k}$ as defined in Equation~\eqref{eq:reccallan}.
\end{con}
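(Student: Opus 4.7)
The plan is to prove the conjecture by exhibiting a growth for $AV(23\text{-}1\text{-}4)$ according to the succession rule $\Omega_{pCat}$, where the label of a permutation is its number of RTL minima. Such a growth would immediately imply, by induction on $n$, that the number of permutations in $AV_n(23\text{-}1\text{-}4)$ with $k$ RTL minima equals the number of nodes with label $(k)$ at level $n$ of the generating tree associated with $\Omega_{pCat}$, which is $c_{n,k}$ by Equation~\eqref{eq:reccallan}.

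To design the growth, I would first analyze the structure of permutations in $AV(23\text{-}1\text{-}4)$ in terms of RTL minima: if $\pi$ has RTL minima $v_1 < v_2 < \cdots < v_k$ at positions $q_1 < q_2 < \cdots < q_k = n$, then $v_1 = 1$ and the subpermutation to the right of each $v_i$ is constrained by the avoidance of $23\text{-}1\text{-}4$. The naive ``insert $n+1$ at every valid position'' growth does not suffice: since $n+1$ is the new maximum, inserting it either preserves or increments the RTL minimum count by one, producing only labels $(k)$ and $(k+1)$ rather than the full range $(1),(2),\ldots,(k+1)$ required by $\Omega_{pCat}$. Hence I would seek a richer growth in which each parent $\pi$ with $k$ RTL minima gives rise to exactly one child of label $(k+1)$ (say, by appending $n+1$ when this is valid, or by a canonical alternative otherwise), and, for each $j \in \{1,\ldots,k\}$, exactly $j$ children of label $(j)$, obtained by insertions designed to destroy $k-j$ RTL minima while splitting the remaining choices along the positions of those RTL minima.

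Concretely the steps would be: (i) characterize the valid insertion positions in terms of the prefixes that avoid $23\text{-}1$; (ii) for each $j \leq k$, describe $j$ canonical modifications of $\pi$, plausibly of the form ``insert $n+1$ at one of the positions just after $q_{k-j+1}, \ldots, q_k$, possibly composed with a shift that erases some of the original RTL minima,'' that yield a permutation with exactly $j$ RTL minima; (iii) verify that each such modification stays within $AV(23\text{-}1\text{-}4)$ by a case analysis of the potential new $23\text{-}1\text{-}4$ occurrences; and (iv) prove that every $\pi' \in AV_{n+1}(23\text{-}1\text{-}4)$ is produced exactly once, by constructing an inverse map that recovers the parent together with the data $(j,s)$ identifying which of the $1 + k(k+1)/2$ children it is.

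The main obstacle is step (ii): finding the correct operations that reproduce the multiplicities $(j)^j$ of $\Omega_{pCat}$. The authors explicitly report that their attempts failed, suggesting that no straightforward insertion scheme encodes $\Omega_{pCat}$, and that a more subtle two-step construction, analogous to the steps a)-b)-c) in the proof of Proposition~\ref{prop:pocatrule} for inversion sequences in $\mathbf{I}(=,>,>)$, may be needed. As a backup, one could seek a bijection between $AV_n(23\text{-}1\text{-}4)$ and valley-marked Dyck paths of semi-length $n$ that matches the RTL minimum statistic with the length of the last descent: this would bypass the need for an explicit growth but would require constructing a non-trivial bijection from scratch.
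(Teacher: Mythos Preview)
The statement you are addressing is presented in the paper as a \emph{conjecture}, not a theorem: the authors explicitly write that they attempted to prove it by defining a growth for $AV(23\text{-}1\text{-}4)$ according to $\Omega_{pCat}$, that their attempts were not successful, and that they leave the problem open (with only numerical evidence up to $n=9$). There is therefore no proof in the paper to compare your proposal against.

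Your proposal is not a proof either, and you acknowledge as much. The overall strategy you outline --- exhibit a growth for $AV(23\text{-}1\text{-}4)$ according to $\Omega_{pCat}$ with label equal to the number of RTL minima --- is exactly the approach the authors say they tried and could not complete. You correctly identify that naive insertion of $n+1$ cannot produce the full label set $(1),(2)^2,\ldots,(k)^k,(k+1)$, and you correctly locate the gap at step~(ii): the design of operations realizing the multiplicities $(j)^j$. But you do not actually supply those operations; the phrases ``plausibly of the form'' and ``possibly composed with a shift'' are placeholders, not constructions. Steps~(iii) and~(iv) are then vacuous, since there is nothing concrete to verify or invert. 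The backup plan (a direct bijection with valley-marked Dyck paths matching RTL minima to last-descent length) is likewise only stated, not carried out.

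In short: your document is a reasonable research plan that recapitulates the authors' own suggested line of attack, but it contains no new idea beyond what the paper already sketches, and it does not close the gap that makes this a conjecture rather than a theorem.
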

Although we have a little evidence of the above fact (only up to $n=9$), we suspect that a growth for permutations avoiding the pattern $2\text{-}1\text{-}3$ according to $\Omega_{Cat}$, where the label $(k)$ marks the number of RTL minima, could be generalized as to obtain one for permutations avoiding $23\text{-}1\text{-}4$ according to $\Omega_{pCat}$. We leave open the problem of finding such a growth.

\subsection{A second succession rule for powered Catalan numbers}\label{sec:2ndrulepcat}

The bijection provided by Callan between increasing ordered trees with increasing leaves and $AV(\callanpat)$ in~\cite{callan} is quite intricate. 
And the interpretation of the parameter $k$ in $c_{n,k}$ is rather complicated on permutations avoiding $\callanpat$. 
This suggests that the combinatorics of $\callanpat$-avoiding permutations is quite different from that of other powered Catalan objects previously presented in our paper. 
This is also supported by the fact that we can describe a natural growth for $AV(\callanpat)$ 
which is not encoded by  $\Omega_{pCat}$. 
This leads us to present a second succession rule associated with powered Catalan numbers (denoted $\Omega_{\callanpat}$ below). 
Our impression is that a powered Catalan family is naturally generated by either $\Omega_{pCat}$ or $\Omega_{\callanpat}$, but not by both. 
This is further discussed at the beginning of Section~\ref{sec:bijection}. 

\medskip

Following Callan~\cite{callan}, we observe that permutations avoiding $\callanpat$ have a simple characterization in terms of LTR minima and RTL maxima, as follows.

\begin{proposition}\label{1-23-4}
A permutation $\pi$ of length $n$ belongs to $AV(\callanpat)$ if and only if for every index $1\leq i < n$, 
\begin{center}if $\pi_i \pi_{i+1}$  is an ascent (\emph{i.e.}, $\pi_i<\pi_{i+1}$), then $\pi_i$ is a LTR minimum {\bf or}  $\pi_{i+1}$ is a RTL maximum. \end{center}
\end{proposition}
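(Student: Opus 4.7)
The plan is to prove both directions by contrapositive, unpacking the definition of the vincular pattern $\callanpat$. Recall that an occurrence of $\callanpat$ in $\pi$ is a quadruple of indices $i < j < j+1 < k$ (so $j$ and $j+1$ are forced to be consecutive, while $i$ and $k$ are arbitrary subject to $i<j$ and $k>j+1$) whose corresponding entries $\pi_i, \pi_j, \pi_{j+1}, \pi_k$ are in the relative order $1,2,3,4$, \emph{i.e.}\ satisfy $\pi_i < \pi_j < \pi_{j+1} < \pi_k$. In particular, any occurrence of $\callanpat$ singles out an ascent at position $j$.

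First, for the ``only if'' direction, I would prove the contrapositive: if there is some index $i$ with $1 \leq i < n$ at which $\pi_i \pi_{i+1}$ is an ascent yet $\pi_i$ is not a LTR minimum \emph{and} $\pi_{i+1}$ is not a RTL maximum, then $\pi$ contains $\callanpat$. Indeed, the failure of $\pi_i$ being a LTR minimum supplies an index $a < i$ with $\pi_a < \pi_i$, and the failure of $\pi_{i+1}$ being a RTL maximum supplies an index $b > i+1$ with $\pi_b > \pi_{i+1}$. Combined with the ascent $\pi_i < \pi_{i+1}$, the quadruple $(\pi_a, \pi_i, \pi_{i+1}, \pi_b)$ is an occurrence of $\callanpat$.

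Next, for the ``if'' direction, I would again argue by contrapositive: assume $\pi$ contains an occurrence of $\callanpat$, given by indices $a < j < j+1 \leq b$ (with $b > j+1$) satisfying $\pi_a < \pi_j < \pi_{j+1} < \pi_b$. Then the pair $\pi_j \pi_{j+1}$ is an ascent, and at this very position both conditions of the characterization fail: the existence of $\pi_a < \pi_j$ with $a < j$ shows that $\pi_j$ is not a LTR minimum, while the existence of $\pi_b > \pi_{j+1}$ with $b > j+1$ shows that $\pi_{j+1}$ is not a RTL maximum.

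Since the proof amounts to reading off the pattern definition twice, there is no real obstacle; the only care needed is to correctly interpret the dash convention in $\callanpat$, namely that $2$ and $3$ must occupy consecutive positions while $1$ and $4$ may be arbitrarily placed to the left and right. Everything else is a direct existential/universal translation between ``$\pi_i$ is a LTR minimum'' and ``there is no $a < i$ with $\pi_a < \pi_i$'', and the dual statement for RTL maxima.
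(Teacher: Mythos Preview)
Your proof is correct and follows essentially the same approach as the paper's own proof: both directions are argued by contrapositive, directly translating the definition of the vincular pattern $\callanpat$ into the failure of the LTR-minimum/RTL-maximum condition at the forced ascent. The only differences are cosmetic (choice of index names).
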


\begin{proof}
Suppose that there exists an index $i$, $1\leq i<n$, such that $\pi_i<\pi_{i+1}$, and neither $\pi_i$ is a LTR minimum nor  $\pi_{i+1}$ is a RTL maximum. Then, there exists an index $j<i$ such that $\pi_j<\pi_i$, and an index $k>i+1$ such that $\pi_k>\pi_{i+1}$. Thus, $\pi_j\pi_i\pi_{i+1}\pi_k$ forms an occurrence of $\callanpat$. Conversely, if $\pi$ contains an occurrence of $\callanpat$, by definition of pattern containment there exists an index $i$, $1\leq i<n$, such that $\pi_i<\pi_{i+1}$, and neither $\pi_i$ is a LTR minimum nor  $\pi_{i+1}$ is a RTL maximum.
\end{proof}

We show now a recursive growth for the family $AV(\callanpat)$ that yields a succession rule for powered Catalan numbers whose labels are arrays of length two. 

\begin{proposition}\label{prop:1-23-4}
Permutations avoiding $\callanpat$ grows according to the following succession rule
\[\Omega_{\callanpat}=\left\{\begin{array}{lllr}
(1,1)&&&\\
\\
(1,k)&\rightsquigarrow&\hspace{-3mm} (1,k+1), (2,k), \ldots , (1+k,1), \\[1ex]
(h,k)&\rightsquigarrow&\hspace{-3mm}(1,h+k), (2,h+k-1), \ldots,  (h, k+1),\\
&&\hspace{-3mm}(h+1,0),\ldots,(h+k,0), &\qquad\mbox{if }h\neq1.
 \end{array}  \right.
\]
\end{proposition}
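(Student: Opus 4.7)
The plan is to define a growth on $AV(\callanpat)$ by appending a new rightmost entry (with suitable renaming) and to assign to each $\pi \in AV_n(\callanpat)$ the label $(h,k)$ with $h = \pi_n$ and $k = M(\pi) - \pi_n$, where
\[
M(\pi) = \min\{\pi_{i+1} : 1 \leq i < n,\ \pi_i < \pi_{i+1},\ \pi_i \text{ is not a LTR minimum of }\pi\},
\]
with the convention $M(\pi) = n+1$ if the set above is empty. By Proposition~\ref{1-23-4}, at every ``bad'' ascent $\pi_i\pi_{i+1}$ (that is, one with $\pi_i$ not a LTR minimum) the entry $\pi_{i+1}$ is a RTL maximum, which forces $\pi_{i+1} \geq \pi_n$, so $k \geq 0$. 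The unique length-$1$ permutation has $\pi_n = 1$ and $M = 2$, giving the label $(1,1)$: the axiom of $\Omega_{\callanpat}$.

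For $v \in \{1,\ldots,n+1\}$, let $\pi \oplus v$ be the length-$(n+1)$ permutation whose last entry is $v$ and whose other entries are $\pi_i + \mathbf{1}[\pi_i \geq v]$. The first step is to prove, using Proposition~\ref{1-23-4}, that $\pi \oplus v \in AV_{n+1}(\callanpat)$ if and only if $v \leq M(\pi)$. The shift preserves strict order, so LTR minima and the ``bad/safe'' status of ascents at positions $1,\ldots,n-1$ carry over, and a previously bad ascent $\pi_i\pi_{i+1}$ keeps its RTL-maximum property in $\pi \oplus v$ exactly when $v \leq \pi_{i+1}$; any new ascent at position $n$ is automatically safe since the new last entry $v$ is trivially a RTL maximum. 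This also shows that every $\sigma \in AV_{n+1}(\callanpat)$ is the child of a unique $\pi$.

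It then remains to read off the label of $\pi \oplus v$. Obviously $h' = v$. A case analysis on whether a new bad ascent appears at position $n$ (which happens precisely when $v > \pi_n > 1$, since $\pi_n = 1$ makes $\pi_n$ itself a LTR minimum while $v \leq \pi_n$ creates no new ascent at all) yields
\[
M(\pi \oplus v) =
\begin{cases}
M(\pi) + 1 & \text{if } v \leq \pi_n \text{ or } \pi_n = 1,\\
v & \text{if } v > \pi_n \text{ and } \pi_n > 1.
\end{cases}
\]
Substituting into $k' = M(\pi \oplus v) - v$ gives:
\begin{itemize}
\item if $h=1$, the valid values $v = 1,\ldots,k+1$ produce labels $(v, k+2-v)$, listing as $(1,k+1),(2,k),\ldots,(k+1,1)$;
\item if $h\neq 1$, the values $v = 1,\ldots,h$ produce $(v,h+k+1-v)$, giving $(1,h+k),\ldots,(h,k+1)$, while the values $v = h+1,\ldots,h+k$ produce $(v,0)$, giving $(h+1,0),\ldots,(h+k,0)$.
\end{itemize}
In each case the list coincides exactly with the productions of $\Omega_{\callanpat}$.

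The main obstacle I anticipate is the careful bookkeeping behind the formula $M(\pi \oplus v) = M(\pi)+1$ in the first case: one must verify that each surviving bad ascent $\pi_i\pi_{i+1}$ contributes the shifted value $\pi_{i+1}+1$ (which follows from $v \leq M(\pi) \leq \pi_{i+1}$ forcing the shift) and that no previously non-bad ascent turns bad (thanks to the shift being order-preserving, hence LTR-minimum-preserving). The asymmetry between the two cases $h=1$ and $h \neq 1$ in $\Omega_{\callanpat}$ will then reflect precisely the fact that $\pi_n = 1$ is the unique situation where $\pi_n$ is itself a LTR minimum, preventing the creation of any new bad ascent at position $n$ regardless of $v$.
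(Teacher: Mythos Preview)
Your proposal is correct and follows essentially the same approach as the paper. Both grow $AV(\callanpat)$ by inserting a new rightmost value, assign the identical label $(h,k)=(\pi_n,\,M(\pi)-\pi_n)$ (the paper phrases $M(\pi)$ as the minimal ``$4$'' of an occurrence of $1\text{-}23$, which is exactly your minimum over bad ascents), and perform the same case split on whether $\pi_n=1$; your systematic computation of $M(\pi\oplus v)$ via Proposition~\ref{1-23-4} is a slightly cleaner packaging of the paper's argument that $1\,\pi_n\,a$ is the new minimal $1\text{-}23$ occurrence when $a>\pi_n>1$.
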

\begin{proof}
First, observe that removing the rightmost point of a permutation avoiding $\callanpat$, we obtain a permutation that still avoids $\callanpat$. 
So, a growth for the permutations avoiding $\callanpat$ can be obtained with local expansions on the right. 
We denote by $\pi\cdot a$, where $a \in \{1, \dots, n+1\}$, the permutation $\pi' = \pi'_1 \dots \pi'_{n}\pi'_{n+1}$ where $\pi'_{n+1}=a$, and $\pi'_i = \pi_i$, if $\pi_i < a$, $\pi'_i = \pi_i +1$ otherwise. 

For $\pi$ a permutation in $AV_n(\callanpat)$, 
the active sites of $\pi$ are by definition 
the points $a$ (or equivalently the values $a$) such that $\pi \cdot a$ avoids $\callanpat$. The other points $a$ are called non-active sites. 

An occurrence of $1\text{-}23$ in $\pi$ is a subsequence $\pi_j \pi_i \pi_{i+1}$ (with $j<i$) such that $\pi_j<\pi_i<\pi_{i+1}$. 
Note that the non-active sites $a$ of $\pi$ are the values larger than $\pi_{i+1}$, for some occurrence $\pi_j \pi_i \pi_{i+1}$ of $1\text{-}23$. 
Then, given $\pi\in AV_n(\callanpat)$, we denote by $\pi_s\pi_{t-1}\pi_t$ the occurrence of $1\text{-}23$ (if there is any), in which the point $\pi_t$ is minimal. Then the active sites of $\pi$ form a consecutive sequence from the bottommost site to $\pi_t$, \emph{i.e.} they are $[1,\pi_t]$. Figure~\ref{fig:1234growth} should help understanding which sites are active (represented by diamonds, as usual). If $\pi\in AV_n(\callanpat)$ has no occurrence of $1\text{-}23$, then the active sites of $\pi$ are $[1,n+1]$.

Now, we assign a label $(h,k)$ to each permutation $\pi\in AV_n(\callanpat)$, 
where $h$ (resp. $k$) is the number of its active sites  smaller than or equal to (resp. greater than) $\pi_n$.
Remark that $h\geq1$, since $1$ is always an active site. Moreover, $h=\pi_n$: indeed, let $\pi_s\pi_{t-1}\pi_t$ be the occurrence of $1\text{-}23$ with $\pi_t$ minimal. It must hold that $\pi_t\geq\pi_n$, otherwise $\pi_s\pi_{t-1}\pi_t\pi_n$ would form an occurrence of $\callanpat$.

The label of the permutation $\pi=1$ is $(1,1)$, which is the axiom in $\Omega_{\callanpat}$. 
The proof then is concluded by showing that for any $\pi\in AV_n(\callanpat)$ of label $(h,k)$, the permutations $\pi \cdot a$ have labels according to the productions of $\Omega_{\callanpat}$ when $a$ runs over all active sites of $\pi$.
To prove this we need to distinguish whether $\pi_n=1$ or not. 

If $\pi_n=1$, no new occurrence of $1\text{-}23$ can be generated in the permutation $\pi\cdot a$, for any $a$ active site of $\pi$. Thus, the active sites of $\pi\cdot a$ are as many as those of $\pi$ plus one (since the active site $a$ of $\pi$ splits into two actives sites of $\pi \cdot a$). Then, since $\pi_n=1$, permutation $\pi$ has label $(1,k)$, for some $k>0$ (at least one site above $1$ in active), and permutations $\pi\cdot a$, for $a$ ranging over all the active sites of $\pi$ from bottom to top, have labels $(1,k+1), (2,k),\ldots,(1+k,1)$, which is the first production of $\Omega_{\callanpat}$.

Otherwise, we have that $\pi$ has label $(h,k)$, with $h>1$, and $\pi_n=h$. In this case a new occurrence of $1\text{-}23$ is generated in the permutation $\pi\cdot a$, for every $a>\pi_n$: indeed, $1\,\pi_na$ forms an occurrence of $1\text{-}23$, and moreover is such that $a$ is minimal. Else if $a\leq \pi_n$, no new occurrence of $1\text{-}23$ can be generated in the permutation $\pi\cdot a$. Thus, permutations $\pi\cdot a$ have labels $(1,h+k), (2,h+k-1),\ldots,(h,k+1)$, for any active site $a\leq\pi_n$, and labels $(h+1,0),(h+2,0),\ldots,(h+k,0)$, for any active site $a>\pi_n$. Note that this label production coincides with the two lines of the second production of $\Omega_{\callanpat}$ concluding the proof.
Figure~\ref{fig:1234growth} shows an example of the above construction.
\end{proof}
\begin{figure}[ht!]
\centering
\IfFileExists{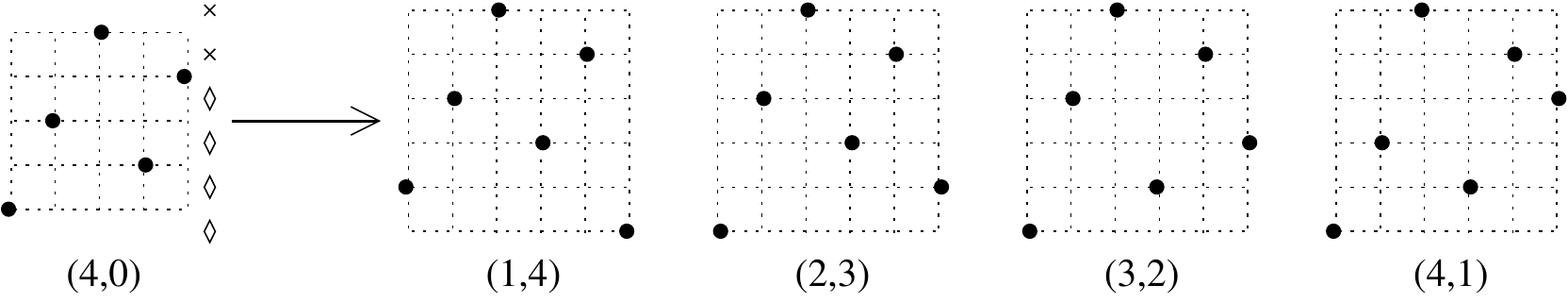}{\includegraphics[width=\textwidth]{1234growth.pdf}\vspace{-7mm}}{MISSING FILE}
\caption{The growth of a permutation avoiding $\callanpat$ of label $(4,0)$.}
\label{fig:1234growth}
\end{figure}

\section{The family of steady paths}\label{sec:steadysection}

In this last part of the paper we provide a further (and new) combinatorial interpretation of powered Catalan numbers in terms of lattice paths. 


%


\subsection{Definition and succession rule}\label{sec:steadydef}

\begin{definition}\label{def:steady}
We call a \emph{steady path} of size $n$ a lattice path $P$ confined to the cone  $\mathfrak{C}=\{(x,y)\in\mathbb{N}^2: y\leq x\}$, which uses $U=(1,1)$, $D=(1,-1)$ and $W=(-1,1)$ as steps, without any factor $WD$ nor
$DW$, starting at $(0,0)$ and ending at $(2n,0)$, such that:
\begin{description}
\item{(S1)}  for any factor $UU$, the suffix of $P$ following this $UU$ factor lies weakly below the line parallel to $y=x$ passing through the $UU$ factor;
\item{(S2)}  for any factor $WU$, the suffix of $P$ following this $WU$ factor lies weakly below the line parallel to $y=x$ passing through the up step of the $WU$ factor.
\end{description}
We call the \emph{edge line} of $P$ the line $y=x-t$, with $t\geq0$ an even integer, which supports the up step of the rightmost occurrence of either $UU$ or $WU$ in $P$. 
\end{definition}

The name ``steady'' is motivated by the two restrictions (S1) and (S2), which force these paths to remain weakly below a line that moves rightwards and  conveys more stability to the mountain range the path would represent. 
Figure~\ref{fig:prud}~(a) shows an example of a steady path whose edge line coincides with $y=x$, whereas the edge line of the steady path depicted in Figure~\ref{fig:prud}~(b) is $y=x-6$. Figures~\ref{fig:prud}~(c),(d) show two different examples of paths confined to $\mathfrak{C}$ that are not steady paths. The reader may observe that steady paths can be regarded as a subfamily of those ``skew Dyck paths'' considered in \cite{simoncino} and enumerated according to several parameters. 

\begin{figure}[ht]
\begin{center}
\IfFileExists{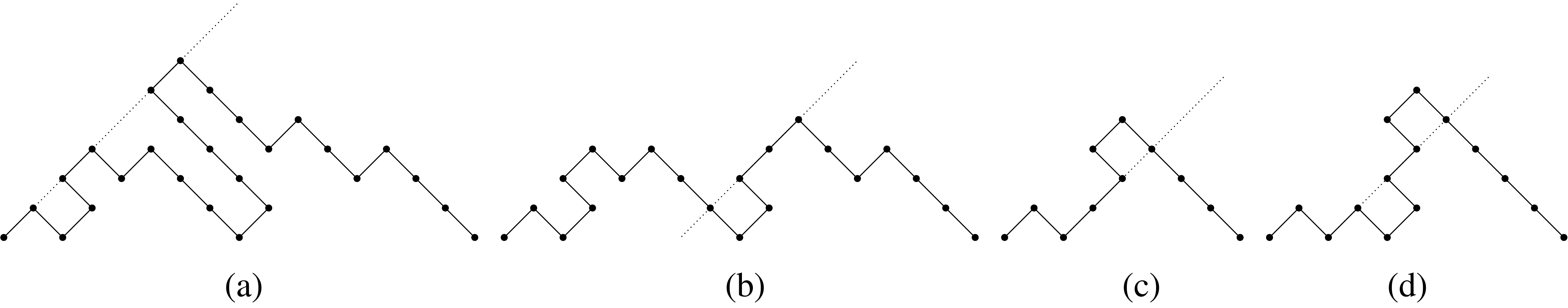}{\includegraphics[width=\textwidth]{prud.pdf}\vspace{-7mm}}{MISSING FILE}
\end{center}
\caption{(a) An example of a steady path $P$ of size $8$ with edge line $y=x$; (b) An example of a steady path $P$ of size $8$ with edge line $y=x-6$; (c) a path in $\mathfrak{C}$ that violates (S1); (d) a path in $\mathfrak{C}$ that violates (S2).}
\label{fig:prud}
\end{figure}
\begin{remark}\label{rem:steady}
By Definition~\ref{def:steady}, the size of a steady path $P$ is equal to the number of its $U$ steps.
Moreover, any steady path $P$ of size $n$ is uniquely determined by the set of positions of its up steps $U^{(1)},\ldots,U^{(n)}$ recorded from left to right: precisely, by the set of starting points $(i_k,j_k)$ for any $U^{(k)}$. Indeed, since neither $WD$ nor $DW$ can occur, there is only one way to draw a steady path given the set of positions $\{(0,0)=(i_1,j_1),\ldots,(i_n,j_n)\}$ of its up steps from left to right. 

Furthermore, let a set of points $\{(i_1,j_1),\ldots,(i_n,j_n)\}$ in $\mathfrak{C}$ be such that for every index $1\leq k\leq n$, $j_k=-i_k+2(k-1)$. This set uniquely defines a steady path of size $n$ provided that for every $1< k\leq n$, if $i_k\leq i_{k-1}+1$, then all the points $(i_\ell,j_\ell)$, with $\ell>k$, lie weakly below the line parallel to $y=x$ passing through the point $(i_k,j_k)$.
\end{remark}

We provide a growth for the family of steady paths that results in the following proposition.
\begin{proposition}\label{prop:steady_growth}
The family of steady paths grows according to the following succession rule
\[\Omega_{steady}=
\left\{\begin{array}{lll}
(0,2)&&\\
\\
(h,k) &\rightsquigarrow \hspace{-3mm}& (h+k-1,2), \ldots , (h+1,k),\\
& \hspace{-3mm}&(0,k+1), \ldots , (0,h+k+1). 
\end{array}\right.\]
\end{proposition}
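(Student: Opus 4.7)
The plan is to define a growth of steady paths that adds one $U$-step (together with accompanying $D$ and $W$ steps) to an existing path, and to encode it via $\Omega_{steady}$ with a geometrically defined labeling. For a steady path $P$ of size $n$, let $(x_0,y_0)$ be the starting point of its last $U$ step and let $y=x-2e$ be its edge line, with the convention $e=0$ when $P$ has no $UU$ or $WU$ factor. Because $DW$ and $WD$ are forbidden and $P$ ends at height $0$, the suffix after the last $U$ is exactly $D^{y_0+1}$, so I would set $k=y_0+2$, encoding the length of the last descent, and $h=(x_0-y_0)/2-e$, measuring in line-units how far below the edge line the last $U$ begins. The size-$1$ path $UD$ has label $(0,2)$, matching the axiom.

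Given $P$ with label $(h,k)$, I would construct $h+k$ children of size $n{+}1$ as follows. \emph{First chain (Dyck-like).} For $i=1,\ldots,k-1$, insert a peak $UD$ in the $i$-th gap of the last descent; the new $U$ is flanked by $D$'s, so no new $UU$ or $WU$ arises, the edge line is preserved, and the new last $U$ at $(x_0+1+i,y_0+1-i)$ carries label $(h+i,k-i+1)$, thereby producing the first chain $(h+k-1,2),\ldots,(h+1,k)$. \emph{Second chain (top insertion).} For $j=0,\ldots,h$, insert the block $W^jUD^{j+1}$ (whose net displacement is $(2,0)$) right after the old last $U$ and before the old last descent. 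For $j=0$ this creates a new $UU$, and for $j\ge1$ a new $WU$; in both cases this new factor is the rightmost one, so the child's edge line passes through the new last $U$, forcing $h'=0$, and the new last $U$ at $(x_0+1-j,y_0+1+j)$ gives $k'=k+j+1$. This produces the second chain $(0,k+1),\ldots,(0,k+h+1)$.

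To complete the proof I would show that every steady path $Q$ of size $n{+}1\ge 2$ arises from a unique parent via a unique operation. Looking at the step immediately preceding $Q$'s last $U$: if it is a $D$, then $Q$ comes from a first-chain insertion and its parent is recovered by deleting the last $U$ and the $D$ just after it; if it is a $U$, then $Q$ arises from the $j=0$ operation; if it is a $W$, then by the absence of $DW$ factors the last $U$ is preceded by a maximal block $W^j$ itself preceded by a $U$, and $Q$ arises from the second-chain operation with that $j$, with the parent recovered by deleting $W^jU$ and $j+1$ trailing $D$'s. In each case, I would verify that the resulting parent is a steady path of size $n$ whose label realizes the operation as one of the productions of $\Omega_{steady}$ applied to that label.

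The main obstacle is to verify that the admissible range of $j$ in the second-chain insertions is exactly $\{0,1,\ldots,h\}$. The necessity direction is the delicate one: inserting $W^jU$ with $j>h$ would place the new $U$ on the line $y=x-2(h+e-j)$, strictly above the pre-existing edge line $y=x-2e$, so the suffix of the older edge-defining $UU$ or $WU$ would contain the new $U$ at a point violating condition (S1) or (S2) for that older factor. Conversely, for $j\le h$ one must check that every intermediate point of the inserted $W^j$ block lies weakly inside the cone $y\le x$ and weakly below $y=x-2e$, and that the newly created $UU$ or $WU$ satisfies its own (S1) or (S2) with respect to the closing $D$-suffix. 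The degenerate case $e=0$ (no older edge-defining factor) is handled uniformly by taking $y=x$ as the default edge line.
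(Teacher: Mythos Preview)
Your proposal is correct and follows essentially the same approach as the paper. The paper phrases the growth in terms of the positions of the $U$ steps (using the characterization of steady paths by the coordinate set $\{(i_1,j_1),\ldots,(i_n,j_n)\}$), inserting a new rightmost $U$ at any point $(2n,0),(2n-1,1),\ldots,(2n-s,s)$ with $s=n-t$ and distinguishing whether or not this point lies on the last descent; you phrase the same growth as inserting either a peak $UD$ inside the last descent or a block $W^jUD^{j+1}$ after the old last $U$. Your labeling $(h,k)=\big((x_0-y_0)/2-e,\;y_0+2\big)$ coincides with the paper's $(h,k)=\big((n-t)-r,\;r+1\big)$, so the label computations and productions match exactly, and your admissibility analysis for $j\in\{0,\ldots,h\}$ is precisely the edge-line constraint the paper uses.
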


\begin{proof}
By Remark~\ref{rem:steady}, given a steady path $P$ of size $n$, we obtain a steady path of size $n-1$ if we remove its rightmost point $(i_n,j_n)$, namely the rightmost up step of $P$. 
This allows us to provide a growth for steady paths by addition of a new rightmost up step. 

Let $P$ be a steady path of size $n$, and $(0,0)=(i_1,j_1),\ldots,(i_n,j_n)$ be the positions of its up steps. 
We describe in which position $(i_{n+1},j_{n+1})$ a new rightmost up step can be inserted so that the path obtained is still a steady path.
Specifically, according to Definition~\ref{def:steady}, if the edge line of $P$ is $y=x-2t$, with $t$ a non-negative integer, 
then the point $(i_{n+1},j_{n+1})$ must remain weakly below this line, that is $j_{n+1}\leq i_{n+1}-2t$. 
So, we add a new rightmost up step in any position $(2n,0),(2n-1,1),(2n-2,2),\ldots,(2n-s,s)$, where $s=n-t$. 
By Remark~\ref{rem:steady}, there exists a unique path of size $n+1$ corresponding to $(0,0)=(i_1,j_1),\ldots,(i_n,j_n),(i_{n+1},j_{n+1})$, 
where $(i_{n+1},j_{n+1})$ is any point among $(2n,0),(2n-1,1),(2n-2,2),\ldots,(2n-s,s)$, and it is steady by construction.

Moreover, the positions $(2n,0),(2n-1,1),(2n-2,2),\ldots,(2n-s,s)$ can be divided into two groups: the positions that are ending points of $D$ steps of the last descent of $P$, and those which are not. This distinction is crucial. Indeed, when we insert a $U$ step in an ending point of a $D$ step of $P$'s last descent, no factors $WU$ or $UU$ are generated. On the contrary, denoting $(2n-r,r)$ the topmost point of the last descent of $P$, when we insert the new rightmost $U$ step at position $(2n-r,r)$, a $UU$ factor is formed, and when we insert it in any point $(2n-i,i)$, with $r<i\leq s$, a $WU$ factor is formed. In both cases, the edge line of the obtained steady path must pass through the point $(2n-r,r)$ (resp. $(2n-i,i)$, for $r< i\leq s$). 
Thus, the edge line may move rightwards so as to include this point.

Now, we assign the label $(h,k)\equiv(h,r+1)$ to any steady path $P$ of size $n$ and edge line $y=x-2t$, where $r\geq1$ is the number of steps in the last descent of $P$ and $h=(n-t)-r$. In other words, the label interpretation is such that $h$ counts the positions in which we insert a new rightmost $U$ step that do not belong to the last descent of $P$.

The steady path $UD$ of size $1$ has edge line $y=x$. Thus its label is $(0,2)$, which is the axiom of $\Omega_{steady}$.
Given a steady path $P$ of size $n$, edge line $y=x-2t$, and label $(h,k)\equiv(h,r+1)$, we now prove that the labels of the steady paths obtained by inserting a $U$ step at positions $(2n,0),\ldots,(2n-s,s)$, with $s=n-t$, are precisely the label productions of $\Omega_{steady}$.
Indeed, by inserting the $U$ step at positions $(2n,0),\ldots,(2n-(r-1),r-1)$ the edge line does not change and the paths obtained have labels $(h+k-1,2),\ldots,(h+1,k)$, respectively. Whereas, by inserting the $U$ step at position $(2n-i,i)$, for every $r\leq i\leq s$, the edge line becomes (or remains) $y=x-2(n-i)$ and the path has label $(0,i+2)$. Thus, we obtain the labels $(0,k+1),\ldots,(0,h+k+1)$, which are the second line of the production of $\Omega_{steady}$, completing the proof.
\end{proof}
Figure~\ref{prud_baxter_growth} depicts the growth of a steady path of size $n$ with edge line $y=x-2$; for any path, the corresponding edge line is drawn. 

\begin{figure}[ht]
\begin{center}
\IfFileExists{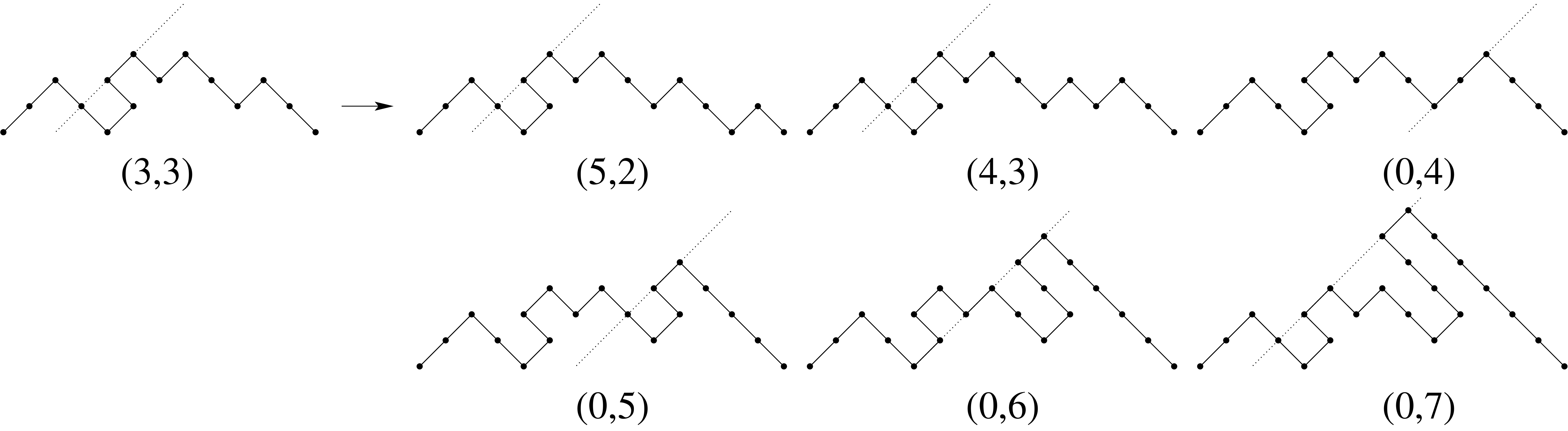}{\includegraphics[width=\textwidth]{prud_baxter_growth.pdf}\vspace{-7mm}}{MISSING FILE}
\end{center}
\caption{The growth of a steady path according to rule $\Omega_{steady}$.}
\label{prud_baxter_growth}
\end{figure}
\subsection{Recursive bijection between steady paths and $AV(\callanpat)$}

Although at a first sight the succession rule $\Omega_{steady}$ does not resemble the rule $\Omega_{\callanpat}$ of Proposition~\ref{prop:1-23-4}, the following result follows by the fact that $\Omega_{steady}$ and $\Omega_{\callanpat}$ actually define the same generating tree.
\begin{proposition}\label{prop:1-23-4bij}
The number of steady paths of size $n$ is equal to the number of permutations in $AV_n(\callanpat)$, thus is the $n$-th powered Catalan number.
\end{proposition}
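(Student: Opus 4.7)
My plan is to prove the equality $|\text{steady}_n| = |AV_n(\callanpat)|$ by verifying the claim made in the paragraph preceding the proposition: the succession rules $\Omega_{steady}$ and $\Omega_{\callanpat}$ generate the same number of nodes at every level. Since by Proposition~\ref{prop:1-23-4} the family $AV(\callanpat)$ grows according to $\Omega_{\callanpat}$ and is known (from Callan's work) to be counted by the powered Catalan numbers, this would immediately yield the result.

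The crucial observation is that $\Omega_{\callanpat}$ contains a built-in redundancy: for every $k \geq 2$, applying the first production to the label $(1, k)$ and the second production to the label $(k+1, 0)$ yield \emph{exactly the same} ordered list of children, namely $(1, k+1), (2, k), \ldots, (k+1, 1)$. Consequently, the subtrees of the $\Omega_{\callanpat}$-tree rooted at $(1, k)$ and at $(k+1, 0)$ are identical as labeled trees, and one may safely identify $(k+1, 0) \equiv (1, k)$ for every $k \geq 2$ without affecting the number of nodes at any level.

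With this identification in hand, I would introduce the label bijection $\phi$ from $\Omega_{steady}$-labels to $\Omega_{\callanpat}$-labels (modulo the above identification) defined by
\[
\phi(0, k) := (1, k-1) \text{ for } k \geq 2, \qquad \phi(h, k) := (k, h) \text{ for } h \geq 1,\; k \geq 2.
\]
One checks directly that $\phi$ is a bijection onto equivalence classes and that it sends the axiom $(0,2)$ of $\Omega_{steady}$ to the axiom $(1,1)$ of $\Omega_{\callanpat}$. The main step is to verify that for every steady label $(h, k)$, the multiset of $\phi$-images of its children under $\Omega_{steady}$ coincides, after applying the identification, with the multiset of children of $\phi(h, k)$ under $\Omega_{\callanpat}$. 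The check splits naturally: when $h = 0$, both sides reduce to the fan $\{(1, k), (2, k-1), \ldots, (k, 1)\}$; when $h \geq 1$, the steady first batch $(h+k-1, 2), \ldots, (h+1, k)$ maps to $(2, h+k-1), \ldots, (k, h+1)$, which is the tail of the $\callanpat$ first batch of $(k, h)$, while the steady second batch $(0, k+1), \ldots, (0, h+k+1)$ maps to $(1, k), \ldots, (1, k+h)$, which recovers both the top first-batch element $(1, h+k)$ of $\callanpat(k,h)$ and, via the identification, the second-batch elements $(k+1, 0), \ldots, (k+h, 0) \equiv (1, k), \ldots, (1, k+h-1)$.

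A simple induction on the level $n$ then yields $|\text{steady}_n| = |AV_n(\callanpat)| = p_n$. I expect the only delicate point to be the multiset bookkeeping in the case $h \geq 1$: the identification $(k+i, 0) \equiv (1, k+i-1)$ must be applied to precisely $h$ elements of the $\callanpat$ second batch, and one has to keep track of the element $(1, k+h)$, which appears in the $\phi$-image of the steady second batch but is supplied on the $\callanpat$ side by the first batch of $(k, h)$ rather than by identification.
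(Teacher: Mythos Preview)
Your proposal is correct and is essentially the same argument as the paper's. Both proofs hinge on the observation that in $\Omega_{\callanpat}$ the labels $(1,k)$ and $(k+1,0)$ have identical productions; the paper then replaces every $(1,k)$ by $(k+1,0)$ and observes that the resulting rule is $\Omega_{steady}$ with the two coordinates swapped, which is precisely your map $\phi$ (your $\phi(h,k)=(k,h)$ for $h\geq 1$ is the coordinate swap, and $\phi(0,k)=(1,k-1)$ undoes the replacement $(1,k-1)\to(k,0)$). Your write-up spells out the multiset check in the case $h\geq 1$ more explicitly than the paper, which simply declares it ``straightforward''.
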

\begin{proof}
We prove the above proposition by showing that the succession rule $\Omega_{\callanpat}$ provided for the family $AV(\callanpat)$ is isomorphic to the rule $\Omega_{steady}$.

First, recall the production of the label $(1,k)$ according to $\Omega_{\callanpat}$, which appears as
\begin{equation}\label{production1k}
(1,k)\rightsquigarrow(1,k+1),(2,k),\ldots,(1+k,1)\,.\end{equation}
Using the same succession rule $\Omega_{\callanpat}$, the label $(h,0)$ produces according to
\begin{equation}\label{productionh0}(h,0)\rightsquigarrow(1,h),(2,h-1),\ldots,(h,1)\,.\end{equation}
Now, consider the generating tree defined by $\Omega_{\callanpat}$ and replace all the labels $(1,k)$ by $(k+1,0)$. According to the production~\eqref{production1k} the children of the node with a replaced label are
\[(k+1,0)\rightsquigarrow (k+2,0),(2,k),\ldots,(1+k,1)\,.\]
Setting $h=k+1$, this can be rewritten as 
\[(h,0)\rightsquigarrow (h+1,0),(2,h-1),\ldots,(h,1)\,,\]
which is exactly the production~\eqref{productionh0} after substituting $(1,h)$ for $(h+1,0)$ in it. 
Therefore, the substitution of all labels $(1,k)$ by $(k+1,0)$ allows us to rewrite the succession rule $\Omega_{\callanpat}$ as follows
\[\left\{\begin{array}{lll}
(2,0)&&\\
\\
(h,k) &\rightsquigarrow \hspace{-3mm}& (h+k+1,0),\\
& \hspace{-3mm}&(2,h+k-1), \ldots , (h,k+1),\\
& \hspace{-3mm}&(h+1,0), \ldots , (h+k,0). 
\end{array}\right.\]
It is straightforward to check that the growth provided for steady paths in Proposition~\ref{prop:steady_growth} defines the above succession rule by exchanging the interpretations of the two parameters $h$ and $k$ with respect to $\Omega_{steady}$.
\end{proof}

Having given two generating trees along the same succession rule for steady paths and permutations avoiding $\callanpat$, 
we deduce immediately a bijection between these classes. 
Namely, this bijection puts in correspondence objects of the two families according to their position in the associated generating tree. 
Of course, this bijection is not explicit, but recursive (following the way along which each object is built starting from the smallest one). 
We would of course wish for a nicer bijection between steady paths and permutations avoiding $\callanpat$. 
We did not succeed, but instead were able to provide a bijection between steady paths and permutations avoiding $\pat$ 
(which are also known to be enumerated by powered Catalan numbers). 

\subsection{One-to-one correspondence between steady paths and $AV(\pat)$
\label{sec:steadybijperm}}

\begin{thm}\label{thm:steadyperm}
There exists an explicit bijection between the family of steady paths and $AV(\pat)$.
\end{thm}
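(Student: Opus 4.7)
The plan is to build an explicit size-preserving bijection $\Phi$ from steady paths to $\pat$-avoiding permutations via a suitable inversion-sequence encoding. By Remark~\ref{rem:steady}, a steady path $P$ of size $n$ is uniquely determined by the starting points $(i_k,j_k)$ of its $n$ up steps, which satisfy $i_k+j_k=2(k-1)$. Setting $e_k := i_k-(k-1)$, the sequence $(e_1,\ldots,e_n)$ is an inversion sequence with $e_1=0$. Using the identity $i_k-j_k=2e_k$, the conditions (S1) and (S2) of Definition~\ref{def:steady} collapse into the single combinatorial condition
\begin{center}
$(\ast)$\quad if $e_{k+1}\le e_k$, then $e_\ell\ge e_{k+1}$ for all $\ell>k+1$.
\end{center}
This produces an explicit bijection between steady paths of size $n$ and the subclass of $\mathbf{I}_n$ specified by $(\ast)$: the inverse places the $k$-th up step at $(e_k+k-1,\,k-1-e_k)$ and fills in the forced $D$ or $W$ segment between consecutive up steps (according to the sign of $e_{k+1}-e_k$, since neither $DW$ nor $WD$ factors are allowed).

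The next step is to show that this subclass of inversion sequences is in one-to-one correspondence with $AV_n(\pat)$ via a classical inversion-table map, possibly precomposed with a reversal as in the bijection with $AV(1\text{-}23,2\text{-}14\text{-}3)$ from Section~\ref{sec:catinvseq}. The heart of the matter is to verify that, under the chosen convention, a permutation $\pi$ avoids $\pat$ if and only if its associated inversion sequence satisfies $(\ast)$. I would prove the two implications by contrapositive: a $\pat$-occurrence $\pi_a\pi_b\pi_{b+1}\pi_c$ (with $a<b$, $c>b+1$ and $\pi_a<\pi_c<\pi_b<\pi_{b+1}$) should translate into a triple of indices witnessing the failure of $(\ast)$, the vincular adjacency ``$34$'' accounting for the adjacent pair $(e_k,e_{k+1})$ in the weak-descent clause and $\pi_c$ accounting for the later entry $e_\ell<e_{k+1}$; the converse direction is handled similarly.

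Composing the two bijections yields the explicit $\Phi$; its inverse reads $(e_k)$ off $\pi$ and rebuilds $P$ as above. Injectivity of $\Phi$ is automatic from the two constituent bijections, while surjectivity is guaranteed either by the explicit inverse or, as an independent sanity check, by the matching cardinalities from Proposition~\ref{prop:1-23-4bij}.

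The main obstacle is to pin down the precise variant of the inversion-table correspondence (left versus right table, direct versus reversed sequence, value- versus position-indexed) so that $(\ast)$ becomes \emph{exactly} $\pat$-avoidance; a priori several natural conventions are sensible, but only one will yield a clean match. Once that convention is fixed, the pattern-avoidance verification should reduce to a focused case analysis matching forbidden configurations in $(e_k)$ with occurrences of $1\text{-}34\text{-}2$ in $\pi$ — I expect this step to be delicate but routine.
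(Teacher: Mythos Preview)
Your proposal is correct and takes essentially the same approach as the paper: the paper reads the (half\nobreakdash-)distances of the up-step starting points to the line $y=x$ from right to left, obtaining precisely the reverse of your sequence $(e_k)$, and identifies this reversed sequence directly with the left inversion table $\mathtt{T}(\pi)$; it then verifies both implications by contrapositive exactly as you outline. The convention you are seeking is therefore $\mathtt{R}\circ\mathtt{T}$, the same one used in Section~\ref{sec:catinvseq}, and with it your condition $(\ast)$ on $(e_k)$ becomes the condition $0<t_j\le t_{j+1}\Rightarrow t_i\ge t_j$ for all $i<j$ on $t=\mathtt{T}(\pi)$, which the paper shows is equivalent to $\pi\in AV(\pat)$.
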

\begin{proof}
By Remark~\ref{rem:steady}, any steady path $P$ of size $n$ is uniquely determined by the positions of its up steps, namely by the points $(0,0)=(i_1,j_1),\ldots,(i_n,j_n)$.
These points that encode a unique steady path can in turn be encoded  from right to left by a sequence $(t_1,\ldots,t_n)$ of integers that records the Euclidean distance between these points and the main diagonal $y=x$. More precisely, the entry $t_k$ is the distance between the point $(i_{n+1-k},j_{n+1-k})$ and the line $y=x$, for any $1\leq k\leq n$. Note that $t_n=0$, because the point $(0,0)$ belongs to the main diagonal. Moreover, for any $1\leq k\leq n$, the entry $t_k$ is in the range $[0,n-k]$, since steady paths are constrained in the cone $\mathfrak{C}=\{(x,y)\in\mathbb{N}^2:y\leq x\}$. For instance, the steady path depicted in Figure~\ref{fig:prud}~(a) is encoded by the sequence $(5,3,0,4,1,0,1,0)$.
\smallskip

Then, we have that any steady path of size $n$ is defined by a particular sequence $(t_1,\ldots,t_n)$, for which $0\leq t_k\leq n-k$, for every $k$. Certainly, the set of all these particular sequences of size $n$ forms a subset of the set $\{\mathtt{T}(\pi):\pi\in\mathcal{S}_n\}$ of the left inversion tables of permutations of length $n$. Our aim is to prove that a left inversion table $t=(t_1,\ldots,t_n)$, with $0\leq t_k\leq n-k$, defines a steady path of size $n$ if and only if  $t\in\{\mathtt{T}(\pi):\pi\in AV_n(\pat)\}$.
\begin{itemize}
\item[$\Rightarrow)$] We prove the contrapositive. Suppose $t=(t_1,\ldots,t_n)$ is the left inversion table of a permutation $\pi\not\in AV_n(\pat)$. Then, since $\pi$ contains $\pat$, there must be three indices $i,j,\ell$, with $i<j<j+1<\ell$, such that $\pi_i<\pi_{\ell}<\pi_j<\pi_{j+1}$. Moreover, we can suppose without loss of generality that there are no points $\pi_s$ between $\pi_i$ and $\pi_{j}$ such that $\pi_s<\pi_i$. Otherwise, we could take $\pi_s\pi_j\pi_{j+1}\pi_\ell$ as our occurrence of $\pat$.

Then, by definition of the left inversion table $t=\mathtt{T}(\pi)$, since $\pi_j<\pi_{j+1}$ and $\pi_j>\pi_\ell$, we have that $0<t_j\leq t_{j+1}$. In addition, since there are no points $\pi_s$ between $\pi_i$ and $\pi_{j}$ such that $\pi_s<\pi_i$, and $\pi_j>\pi_\ell>\pi_i$, it holds that $t_i<t_j$. From this it follows that $t$ cannot encode a steady path $P$. Indeed, assuming such a path $P$ would exists, $t_i$ (resp. $t_j$, resp. $t_{j+1}$) must be the distance between the line $y=x$ and an up step $U^{(i)}$ (resp. $U^{(j)}$, resp. $U^{(j+1)}$), where $U^{(j+1)}$, $U^{(j)}$, and $U^{(i)}$ appear in this order from left to right. Since $t_{j+1}\geq t_{j}$, the up step $U^{(j)}$ must form either a $UU$ factor or $WU$ factor. Note that the line parallel to the main diagonal passing through $U^{(j)}$ cannot be $y=x$, since $t_j>0$. Let this line be $y=x-g$, with $g$ even positive number. Then, from $0\leq t_i<t_j$ it follows that the suffix of $P$ containing the up step $U^{(i)}$ exceeds the line $y=x-g$ passing through $U^{(j)}$.

\item[$\Leftarrow)$] Conversely, suppose for the sake of contradiction that there exists a left inversion table $t=(t_1,\ldots,t_n)$ which encodes a non-steady path $P$ of size $n$.

By definition of steady path, there must be in $P$ an up step $U^{(j)}$ not lying on the main diagonal such that it forms a factor $UU$ or $WU$, and an up step $U^{(i)}$, which is on the right of $U^{(j)}$, lying above the line parallel to $y=x$ and passing through $U^{(j)}$. This means that $0<t_j\leq t_{j+1}$, where $U^{(j+1)}$ is the up step which $U^{(j)}$ immediately follows, and $0\leq t_i<t_j$, with $i<j$. Thus, let $\pi=\mathtt{T}^{-1}(t)$. 
We have that $\pi_i<\pi_j<\pi_{j+1}$, and from $0\leq t_i<t_j$, there exists at least a point $\pi_\ell$, with $j<\ell$, such that $(\pi_j,\pi_\ell)$ is an inversion of $\pi$ and $(\pi_i,\pi_\ell)$ is not.
Consequently, $\pi_i\pi_j\pi_{j+1}\pi_\ell$ forms an occurrence of $\pat$.\qedhere
\end{itemize}
\end{proof}

\section{Bijection between steady paths and valley-marked Dyck paths} \label{sec:bijection}

We have exhibited (three but essentially) two succession rules for powered Catalan numbers: $\Omega_{steady}$ and $\Omega_{pCat}$. 
This leads us to classify powered Catalan structures into two groups: 
\begin{itemize}
\item those that appear as a rather simple generalization of Catalan structures, for which a growth according to the rule $\Omega_{pCat}$ can be found easily;
\item those that generalize Catalan structures, but for which a growth according to $\Omega_{pCat}$ is not immediate, and the parameter $k$ of Equation~\eqref{eq:reccallan} is not clearly understood.
\end{itemize}
Valley-marked Dyck paths are the emblem of the first group; while, steady paths as well as permutations avoiding $\callanpat$ rather belong to the second group of structures. 

We now take advantage of having a representative in each group which is a family of lattice paths confined to the region $\mathfrak{C}$ to provide a bijective link between the two groups, 
with Theorem~\ref{thm:bijection} below. 
Once this theorem will be proved, all powered Catalan structures involved in our study will be related as shown in Figure~\ref{fig:callanstructures}.

\begin{figure}[ht]
\centering
\begin{tikzpicture}[scale=.85]
\node at (0,3.2) {$AV({23}\text{-}1\text{-}4)$};
\node at (0,2.7) {\footnotesize Conjecture~\ref{con:shattuck}};
\draw (0,3) ellipse (1.9cm and .9cm);
\draw[thick,dashed,<->] (0,1.1)--(0,2);
\node at (0,.45) {Valley-marked};
\node at (0,0) {Dyck paths};
\node at (0,-.5) {\footnotesize Section~\ref{sec:mvpaths}};
\draw (0,0) ellipse (2cm and 1cm);
\draw[thick,<->] (2.1,0)--(3.9,0);
\node at (3,.4) {$\Omega_{pCat}$};
\draw[thick,<->] (0,-1.1)--(0,-1.9);
\node at (0.8,-1.55) {Thm.~\ref{thm:bijection}};
\node at (6,0.2) {$\mathbf{I}(=,>,>)$};
\node at (6,-.4) {\footnotesize Section~\ref{sec:callaninvseq}};
\draw (6,0) ellipse (1.9cm and .9cm);
\node at (-6.1,.35) {Increasing ordered};
\node at (-6.1,0) {trees};
\node at (-6.1,-.5) {\footnotesize Section~\ref{sec:mvpaths}};
\draw (-6.1,0) ellipse (2cm and 1cm);
\draw[thick,<->] (-2.2,0)--(-4,0);
\node at (-3,.4) {$\Omega_{pCat}$};
\node at (-6.1,-2.8) {$AV(\callanpat)$};
\node at (-6.1,-3.3) {\footnotesize Section~\ref{sec:2ndrulepcat}};
\draw (-6.1,-3) ellipse (1.8cm and .9cm);
\draw[thick,<->] (-6.1,-1.1)--(-6.1,-2);
\node at (-5.7,-1.6) {\cite{callan}};
\node at (0,-2.5) {Steady};
\node at (0,-3) {paths};
\node at (0,-3.5) {\footnotesize Section~\ref{sec:steadydef}};
\draw (0,-3) ellipse (2cm and 1cm);
\draw[thick,<->] (-2.1,-3)--(-4.2,-3);
\node at (-3.1,-2.6) {$\Omega_{steady}$};
\node at (6,-2.8) {$AV(\pat)$};
\node at (6,-3.4) {\footnotesize Section~\ref{sec:steadybijperm}};
\draw (6,-3) ellipse (1.75cm and .9cm);
\draw[thick,<->] (2.1,-3)--(4.2,-3);
\node at (3.1,-2.6) {Thm.~\ref{thm:steadyperm}};
\end{tikzpicture}\caption{All the structures known or conjectured to be enumerated by the powered  Catalan numbers and their relations: a solid-line arrow indicates a bijection (either recursive, or direct), while a dashed-line arrow indicates a conjectured bijection.
\label{fig:callanstructures}}
\end{figure}
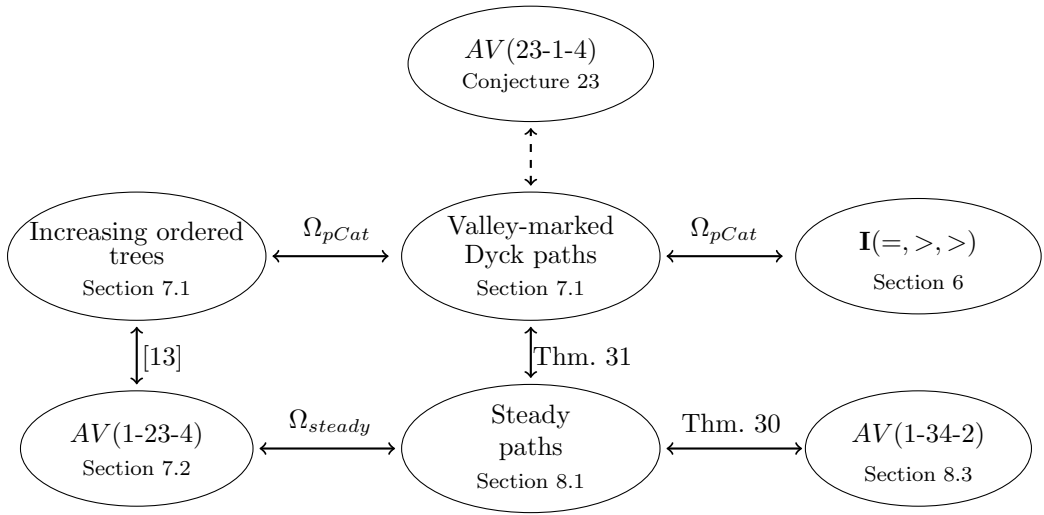

\begin{thm}\label{thm:bijection}
There is a size-preserving bijection between steady paths and marked-valley Dyck paths, 
which sends the number of $W$ steps to total mark, 
preserves the number of steps on the main diagonal, 
and sends the number of returns to the $x$-axis to the number of returns to the mark. 
\end{thm}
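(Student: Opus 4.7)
The plan is to construct an explicit size-preserving bijection $\Phi$ from steady paths to valley-marked Dyck paths, and to verify that it carries the three additional statistics (number of $W$ steps to total mark, steps on the main diagonal to steps on the main diagonal, returns to the $x$-axis to returns to the mark).

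First, I would encode both families via integer sequences. By Remark~\ref{rem:steady}, a steady path of size $n$ is determined uniquely by the sequence $(\alpha_1,\ldots,\alpha_n)$ where $\alpha_k = j_k$ is the starting height of the $k$-th up step $U^{(k)}$, subject to the cone and edge-line constraints. Analogously, a valley-marked Dyck path is encoded by the sequence $(\beta_1,\ldots,\beta_n)$ of starting heights of the up steps in the underlying Dyck path, together with marks $\mu_k\in\{0,\ldots,\beta_k\}$ at each valley (\emph{i.e.}~each $k\geq 2$ with $\beta_k\le\beta_{k-1}$). In this encoding, the statistics become simple arithmetic quantities: main-diagonal up steps count $|\{k:\alpha_k=k-1\}|$ (respectively $|\{k:\beta_k=k-1\}|$); the number of $W$ steps equals $\sum_k \max(0,\alpha_k-\alpha_{k-1}-1)$; the total mark equals $\sum_k\mu_k$. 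The bijection thus becomes a correspondence between two families of decorated integer sequences.

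Next, I would construct $\Phi$ by a local-to-global shifting procedure. The local idea is that a block $UW^{\ell}U$ in a steady path represents $\ell$ units of height being ``pushed back'' to create a $WU$ factor; on the VM Dyck side, this should correspond to a mark of height $\ell$ at some valley of $\Phi(P)$. A first attempt is the pointwise formula $\alpha_k=\beta_k+\mu_k-\mu_{k+1}$, which transfers mass from valley $k{+}1$ onto position $k$; this succeeds on small examples (for instance on all of size $n\le 3$), but the edge-line condition of Definition~\ref{def:steady} is a global constraint that is not automatically preserved for general $(\beta,\mu)$. I would therefore refine the procedure to process valleys iteratively (for instance from right to left), propagating each mark through a sequence of shifts until an edge-line-valid $\alpha$-sequence is obtained; the inverse $\Phi^{-1}$ is then defined by reversing the iteration.

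Finally, I would verify the statistics. Size preservation is automatic from the encoding. The correspondence of $W$ steps with total mark follows from the construction, since each unit of mark produces exactly one $W$ step. Main-diagonal preservation and the correspondence between returns to the $x$-axis and returns to the mark are more delicate, since both are global statistics that depend on the accumulation of the shifts. The hard part will be to control how the iterative shifts interact with the edge-line constraint and the zero-level returns; I expect this to require an invariant argument, possibly analogous to the case analysis in the proof of Proposition~\ref{prop:steady_growth}, tracking the ``free height above the last descent'' as marks are processed. Once these invariants are established, both the statistic preservation and the bijectivity of $\Phi$ will follow.
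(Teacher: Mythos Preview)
Your proposal is a plan rather than a proof, and the central object---the bijection $\Phi$---is never actually defined. You give a pointwise formula $\alpha_k=\beta_k+\mu_k-\mu_{k+1}$, immediately concede that it fails to respect the edge-line constraint, and then say only that you ``would therefore refine the procedure to process valleys iteratively \ldots\ propagating each mark through a sequence of shifts until an edge-line-valid $\alpha$-sequence is obtained.'' What that sequence of shifts is, why it terminates in a valid steady path, and why it is invertible are precisely the content of the theorem, and none of this is supplied. Similarly, for the two delicate statistics (main-diagonal steps and returns) you write that you ``expect this to require an invariant argument'' without stating any invariant. As written, nothing prevents the iterative shifting from destroying returns to the $x$-axis or moving up steps on or off the main diagonal.

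The paper's proof shares your high-level intuition---convert $W$ steps into units of mark one at a time---but carries it out on paths rather than on integer sequences, and in a way that makes the invariants checkable. It introduces the common generalization of \emph{valley-marked steady paths} (steady paths in which valleys may also carry marks, subject to compatibility conditions (M2)--(M3) relating mark heights to $W$-step heights), and defines an explicit local transformation $\phi$: locate the rightmost among the bottommost $W$ steps, decompose the path as $Pr\cdot U\cdot A\cdot DUW\cdot B\cdot D\cdot C\cdot D\cdot S$ around the forced $DUW$ factor, and rewrite it as $Pr\cdot U\cdot B\cdot UD\cdot C\cdot D\cdot S$ or $Pr\cdot U\cdot A\cdot U\cdot B\cdot D\cdot C\cdot D\cdot S$ according to whether $A$ is empty, transferring the circled valley to a new valley one level higher with its mark increased by one. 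An explicit inverse $\theta$ is given by the symmetric decomposition around the leftmost-topmost nontrivially marked valley. The statistic claims (including main-diagonal steps and returns to the mark) are then verified by a short case analysis on each single application of $\phi$, which is exactly the invariant you were missing. If you want to salvage your sequence-based approach, you would need to specify the shift at each iteration with this level of precision and prove the analogue of Lemmas~\ref{lem:transformation_phi} and~\ref{lem:transformation_theta}.
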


This bijection, hereafter denoted $\phi^*$, is explicit and is obtained as follows. Starting from a steady path, 
we apply to it a certain transformation $\phi$ that removes one $W$ step while increasing by one the total mark. 
We repeat this operation until all $W$ steps have been removed. 
To show that $\phi^*$ is a bijection, we actually describe its inverse. 
It is denoted $\theta^*$ and is also obtained by iterating a certain transformation $\theta$ (which is the inverse of $\phi$). 

For this strategy to work, we need to define our transformations on a family of paths having both $W$ steps and marks on their valleys. 
The specific family we consider is the following one. 

\begin{definition}
A \emph{valley-marked steady path} is a steady path $P$ where each valley receives a mark, according to the following conditions: 
\begin{description} 
 \item{(M1)} if $(i,k)$ pinpoints any valley of $P$, then a valley-marked steady path associated with $P$ must have a mark in a point $(i,j)$, where $0\leq j\leq k$;
 \item{(M2)} all valleys with nontrivial marks are (weakly) below all $W$ steps;
 \item{(M3)} a valley with a nontrivial mark may be at the same height as a $W$ step only if it appears to the right of the $W$ step. 
\end{description}
\end{definition}

The definition of total mark and return to the mark are extended to valley-marked steady paths in the obvious way. 

Clearly, valley-marked steady paths without $W$ steps are valley-marked Dyck paths, and 
valley-marked steady paths where all the marks of the valleys are trivial are (in obvious correspondence with) steady paths. 
Moreover, any of these three families of paths includes as a subfamily the classical Dyck paths (or a family of paths in obvious correspondence with them). 
More precisely, the set of Dyck paths is the intersection of the families of steady paths and valley-marked Dyck paths (where trivial marks are interpreted as inexistent).

\subsection{The transformation $\phi$ decreasing the number of $W$ steps}

Consider a valley-marked steady path $P$, assumed to contain at least one $W$ step. 
Consider the rightmost among the bottommost $W$ steps. 
This $W$ being bottommost, it cannot be preceded by a $W$. It also cannot be preceded by a $D$, since $DW$ factors are forbidden. 
Therefore, it is preceded by a $U$. This $U$ can only be preceded by a $D$, otherwise breaking one of the conditions (S1) and (S2) defining steady paths. 
We have therefore identified that our $W$ is preceded by a valley (encircled in Figure~\ref{fig:phi}(a)) which we call \emph{circled valley}. 
If $(i,k)$ pinpoints the circled valley, we call $k$ the height of this valley, and we denote by $h \leq k$ the height of its mark. 

We decompose our valley-marked steady path $P$ around this factor $DUW$. 
The $D$ step has a matching step to its left, which has to be a $U$ step, since $W$ has been chosen bottommost. 
The $U$ and the $W$ have matching $D$ steps to their right. 
Our path $P$ is therefore decomposed as 
\[
Pr \cdot U \cdot A \cdot DUW \cdot B \cdot D \cdot C \cdot D \cdot S,
\textrm{ see Figure \ref{fig:phi}(a),} 
\]
where $Pr$ (resp. $S$) is a prefix (resp. suffix) of the path, 
and $A$, $B$ and $C$ are factors of this path never going below their starting ordinate. 
Additionally, $B$ must be non-empty (since $WD$ factors are forbidden). 
Moreover, all valleys in $A$ or $B$ have a trivial mark, by conditions (M2) and (M3). 
And similarly, the only valleys in $C$ with nontrivial marks (if any) are at the ``ground level'' of $C$ (\emph{i.e.}, at height $k+1$). 

We define the image by $\phi$ of this path depending on whether $A$ is empty or not. 

\begin{itemize}
 \item If $A=\emptyset$, its image is 
\[
Pr \cdot U \cdot B \cdot UD \cdot C \cdot D \cdot S,
\textrm{ see Figure \ref{fig:phi}(b).}
\]
Note that in this case, a valley between $B$ and the successive $U$ step has been created, replacing the circled valley. 
 \item If $A\neq \emptyset$, its image is 
\[
Pr \cdot U \cdot A \cdot U \cdot B \cdot D \cdot C \cdot D \cdot S,
\textrm{ see Figure \ref{fig:phi}(c).} 
\]
Note that in this case, a valley between $A$ and the successive $U$ step has been created, replacing the circled valley. 
\end{itemize}
Performing this transformation, the circled valley has been moved to a new valley at height $k+1$, whose mark is set to $h+1$. 
Every other valley (even if it is moved by $\phi$) keeps its mark unchanged. 

\begin{figure}[ht]
\begin{center}
\IfFileExists{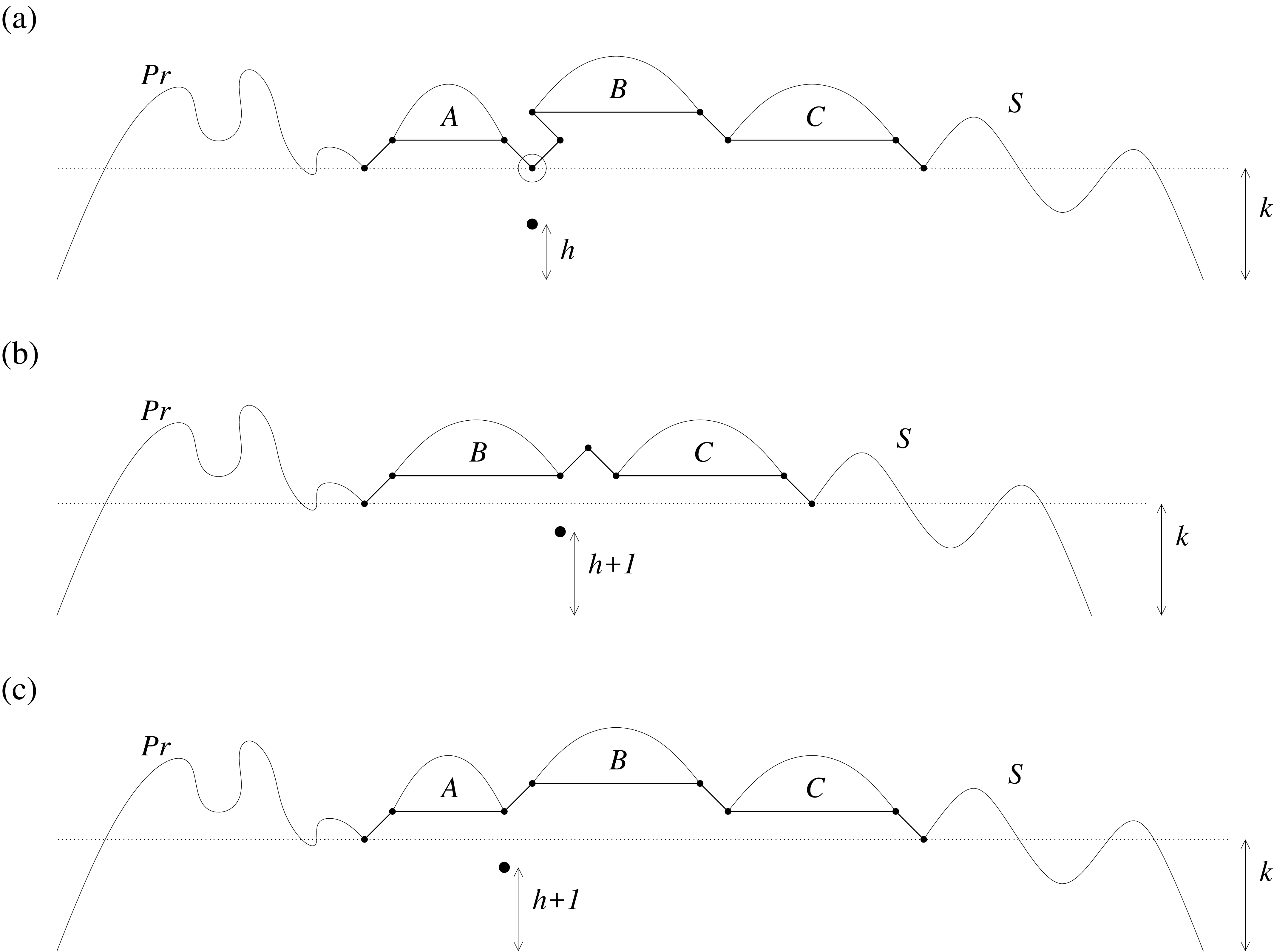}{\includegraphics[width=.8\textwidth]{fig_phi.pdf}}{MISSING FILE}
\end{center}
\caption{(a) A valley-marked steady path assumed to contain at least one $W$ step;
(b) its image by $\phi$ in the case $A$ is empty; 
(c) its image by $\phi$ in the case $A$ is not empty.}
\label{fig:phi}
\end{figure}

\begin{lemma}
\label{lem:transformation_phi}
Let $P$ be a valley-marked steady path with at least one $W$ step, 
with total mark $m$, $k$ $W$ steps, $r$ returns to the mark, and $d$ steps on the main diagonal. 
It holds that
\begin{itemize}
 \item[i)] $\phi(P)$ is a valley-marked steady path. 
 \item[ii)] The total mark of $\phi(P)$ is $m+1$.
 \item[iii)] The number of $W$ steps in $\phi(P)$ is $k-1$. 
 \item[iv)] $\phi(P)$ has $d$ steps on the main diagonal. 
 \item[v)] $\phi(P)$ has $r$ returns to the mark. 
 \item[vi)] The valley that has been moved w.r.t. $P$ is the leftmost among the topmost valleys of $\phi(P)$ carrying a nontrivial mark. 
\end{itemize}
\end{lemma}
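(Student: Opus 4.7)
The plan is to first extract the structural facts of the decomposition and then verify the six items in turn.

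Writing $(i,k)$ for the circled valley, tracking heights through $Pr \cdot U \cdot A \cdot DUW \cdot B \cdot D \cdot C \cdot D \cdot S$ yields that $Pr$ ends at $k$, the $U$ after $Pr$ rises to $k+1$, $A$ (if nonempty) stays weakly above $k+1$ and ends at $k+1$, the steps $D,U,W$ of $DUW$ reach heights $k, k+1, k+2$, the factor $B$ stays above $k+2$ and ends at $k+2$, the subsequent $D$ drops to $k+1$, $C$ stays above $k+1$ and ends at $k+1$, and the last $D$ returns to $k$. The prohibition of $WD$ factors forces $B$ to be nonempty and to end with $D$, and likewise forces $A$ (when nonempty) to end with $D$; a direct counting of $U$, $D$, $W$ steps in $A$ and $B$ then gives horizontal spans $a_x, b_x \geq 2$, since $A$ or $B$ cannot consist solely of $W$ and $D$ steps without creating a $WD$. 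Applying (M2) and (M3) to the bottommost $W$ of $P$ at height $k+1$ forces marks in $A$ and $B$ to be trivial and marks in $C$ to be nontrivial only at height $k+1$; and (M1) gives $h \leq k$.

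The counting items then follow directly. Item (iii) is immediate since a single $W$ step is deleted. For (ii), the only mark change replaces the circled mark $h$ by the new mark $h+1$. For (v), the circled valley and the new valley at height $k+1$ are returns to the mark if and only if $h=k$, and the remaining valleys keep their marks; in particular, in the case $A = \emptyset$ the valleys of $B$ are translated by $(-1,-1)$ but retain their trivial marks while their vertices remain at height $\geq k+1 > 0$, so they never become returns to the mark. For (iv), the removed $U$ of $DUW$ starts at $(i,k)$ with $i \geq k+2$ (required to keep the ensuing $W$ in $\mathfrak{C}$) and so is off the diagonal, while the added $U$ starts at height $k+1$ and $x$-coordinate at least $k+1+\max(a_x,b_x) \geq k+3$, also off the diagonal; the other $U$ steps are unchanged or translated by $(-1,-1)$, and such a translation preserves $x-y$ and hence on-diagonal status. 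For (vi), (M2) and (M3) in $P$ imply that the nontrivial-mark valleys of $P$ at height $k+1$ all lie strictly to the right of the chosen $W$, hence inside $C$ or $S$; since marks are inherited in $\phi(P)$ and the new valley at height $k+1$ lies strictly to the left of $C$, it is the leftmost among the topmost nontrivial-mark valleys of $\phi(P)$.

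The main obstacle is (i). The preliminary height analysis shows that $\phi(P)$ stays in $\mathfrak{C}$ and ends at $(2n,0)$; the absence of new $WD$ or $DW$ factors follows from the fact that both $A$ and $B$ start with $U$ or $W$ and end with $D$. Condition (M1) for the new mark follows from $h+1 \leq k+1$; conditions (M2) and (M3) for the new valley hold because the remaining $W$ steps in $\phi(P)$ lie at height $\geq k+1$, with any at height $k+1$ confined to $Pr$ and hence strictly to the left of the new valley. The delicate points are (S1) and (S2). For each new $UU$ or $WU$ factor created by the surgery (which occurs exactly when $A$ or $B$ begins with $U$ or $W$), I would transport the analogous (S2) constraint that held in $P$ for the $WU$ factor at the head of the $B$\nobreakdash-segment following $W$: the two factors lie along the same affine line $y = x + c$, and the suffix in $\phi(P)$ is obtained from the corresponding suffix in $P$ either by a lower-right translation of $B$ (which preserves the inequality $y \leq x+c$) or by the replacement of the detour $DUW$ by a direct $U$ (which removes only points strictly below the affine boundary). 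Preexisting $UU$ or $WU$ factors whose suffix passes through the modified region retain their (S1)/(S2) constraints in $\phi(P)$ by the same translation-or-replacement argument applied to their suffixes.
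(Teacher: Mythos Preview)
Your approach mirrors the paper's: both decompose around the $DUW$ factor, dispatch (ii)--(v) by direct inspection, handle (i) by tracking how the blocks move and which $UU$/$WU$ constraints are preserved or created, and deduce (vi) from (M2)/(M3) on $P$. The argument is essentially sound, but two details need correction.

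First, your claim that $W$ steps at starting height $k+1$ in $\phi(P)$ are ``confined to $Pr$'' is false. When $A\neq\emptyset$, the first step of $A$ may be such a $W$ (this is the only way a $W$ in $A$ can start at the floor height $k+1$, since a later one would be preceded by $D$, giving a forbidden $DW$). When $A=\emptyset$, the first step of the translated $B$ may be a $W$ now starting at $k+1$. In both cases that $W$ still lies strictly to the left of the new valley (which sits at the end of $A$, respectively at the end of the translated $B$), so your conclusion survives, but the stated reason does not.

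Second, you verify (M2)/(M3) only for the \emph{new} valley. In the case $A=\emptyset$, the $(-1,-1)$ translation of $B$ can bring a $W$ step of $B$ down to starting height $k+1$, and you must also confirm that the pre-existing nontrivial-mark valleys at height $k+1$ (these lie in $C$, as you noted) remain to the right of any such $W$. They do, since $C$ sits to the right of $B$ in $\phi(P)$; but this check is missing from your write-up. The paper addresses this by arguing that the rightmost among the bottommost $W$ steps of $\phi(P)$ is either higher than, or at the same height and to the left of, that of $P$.
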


\begin{proof}
Items ii) and iii) are clear. 

Item v) is also very easy, since the only valley whose height is modified (by $+1$) has its mark modified accordingly (also by $+1$). 

To prove iv), note that in case $A$ is not empty, the only modification between $P$ and $\phi(P)$ is
that the considered $DUW$ factor is replaced by $U$. Neither of these two $U$ steps may be on the main diagonal (since $A$ is not empty), 
so $P$ and $\phi(P)$ have the same number of steps on the main diagonal. 
On the other hand, if $A$ is empty, $B$ is moved one cell to the left and one cell down w.r.t. the prefix $Pr \cdot U$ of $P$ and the rest of the path is unchanged. Since the $U$ step in the considered $DUW$ factor of $P$ is not on the main diagonal, 
and neither the $U$ step following $B$ in $\phi(P)$ (since $B$ is not empty), it follows that $P$ and $\phi(P)$ have the same number of steps on the main diagonal.

Items i) and vi) require more care. 
Consider first i). We need to check that $\phi(P)$ satisfies conditions (S1), (S2), (M1), (M2) and (M3). 

The case where $A$ is not empty is easier. 
Indeed, in this case, the relative positions of $Pr$, $A$, $B$, $C$ and $S$, 
as well as the lines supported by the $UU$ and $WU$ factors are unchanged. 
So, $\phi(P)$ satisfies conditions (S1) and (S2). 
For (M1), notice that all valleys except the circled one are not moved, and their marks are not changed, 
so we are just left with checking that the valley replacing the circled one satisfies condition (M1), 
which is immediate since both the height and the mark of this valley are increased by one. 
Because $P$ satisfies condition (M2), we know that any valley of $P$ with a nontrivial mark is at height at most $k+1$. 
The same stays true in $\phi(P)$ (the valley replacing the circled one having height $k+1$ exactly), 
ensuring that $\phi(P)$ also satisfies condition (M2). 
Finally, the rightmost among the bottommost $W$ steps (if any) of $\phi(P)$ 
either goes from height $k+j$ to $k+j+1$ with $j\geq 2$  
or goes from height $k+1$ to $k+2$ and is in the prefix $Pr \cdot U \cdot A$ of $\phi(P)$.
In the first case, since all valleys of $\phi(P)$ with a nontrivial mark are at height at most $k+1$ (from (M2)), it follows that condition (M3) is clearly satisfied. 
In the second case, since $P$ satisfies condition (M3), it must hold that any valley of $\phi(P)$ with a nontrivial mark at height $k+1$ either belongs to the suffix $B \cdot D \cdot C \cdot D \cdot S$ or is the replacement of the circled valley, and thus to the right of $Pr \cdot U \cdot A$. Then, condition (M3) is also satisfied by $\phi(P)$. 

We now consider the case where $A$ is empty. 
Although the path is modified more substantially, we note that the $U$ steps supporting a line parallel to $y=x$ that cannot be crossed to satisfy conditions (S1) and (S2) are the same in $P$ and $\phi(P)$. Next, we examine how $B$ and $C$ are moved. 
First, $B$ is moved one cell to the left and one cell down w.r.t. the prefix $Pr \cdot U$ of $P$, which is unchanged in $\phi(P)$. 
This makes sure that conditions (S1) and (S2) are not violated by the steps of $B$. 
Second, $C$ is moved one cell to the right and one cell up w.r.t. $B$. 
Noticing that at least one up step in $B$ supports a line parallel to $y=x$ imposing a condition to the suffix of the path, 
this makes sure that no step of $C$ (nor of the suffix following $C$) violates condition (S1) nor (S2). 
Third, the small peak that has been added between $B$ and $C$ clearly satisfies conditions (S1) and (S2). 
That (M1) is satisfied is clear, since again the only valley which is changed is the circled one, 
for which both the height and the mark are increased by one. 
To see that (M2) and (M3) are satisfied, 
observe first that the circled valley does not violate them, 
being either above the rightmost among the bottommost $W$ steps of $\phi(P)$, 
or at the same height but to its right. 
For all other valleys, it is enough to notice that all valleys in $B$ have trivial marks (since $P$ satisfies (M2)) 
and that the rightmost among the bottommost $W$ steps of $\phi(P)$ is 
either higher than that of $P$ or at the same height but to its left. 

We now turn to the proof of vi). 
First, observe that the valley that has been moved w.r.t. $P$ (the replacement of the circled valley)
has mark $h+1$ so is nontrivial. 
Note, in addition, that its height is $k+1$. 
Consider next another valley with a nontrivial mark in $\phi(P)$. It must correspond to a valley with a nontrivial mark in $P$. 
Since $P$ satisfies (M2) and (M3), such a valley may either be at height at most $k$ or it may be at height exactly $k+1$ but to the right of the considered $W$ step of $P$. 
In the case where $A$ is not empty, it follows immediately that the replacement of the circled valley is the leftmost among the topmost valleys of $\phi(P)$ with a nontrivial mark. On the other hand, if $A$ is empty, we have to use in addition that $B$ contains no valley with a nontrivial mark (which follows from (M2) on $P$). In both cases, we obtain that $\phi(P)$ satisfies vi). 
\end{proof}

\subsection{The transformation $\theta$ decreasing the total mark}

Consider a valley-marked steady path $P$, whose total mark is assumed to be non-zero. 
Among the valleys of $P$ having a nontrivial mark, choose the leftmost among the topmost ones. 
Denote by $k>0$ the height of this valley, and by $h>0$ its mark. 

Decompose $P$ around this marked valley $DU$ as follows. 
Let $A$ be the longest factor of $P$ ending with this $D$ step and which stays (weakly) above height $k$. 
(Necessarily, $A$ is not empty.)
Let $Pr$ be the prefix of $P$ before $A$. Note that the last step of $Pr$ may be a $U$ or a $W$ step going from height $k-1$ to height $k$, 
but because of condition (M2), it has to be a $U$. 
Let $B$ be the factor of $P$ between the $U$ step of the marked valley we consider and its matching $D$ step. 
Let $C$ be the longest factor of $P$ following this $D$ which stays (weakly) above height $k$. 
Note that $C$ is followed by a $D$ step. Call $S$ the suffix of $P$ after this $D$ step. 

It results that $P$ is decomposed as 
\[
Pr \cdot A \cdot U \cdot B \cdot D \cdot C \cdot D \cdot S,
\textrm{ see Figure \ref{fig:theta}(a). } 
\]

We define the image of $P$ by $\theta$ according to whether $B$ is empty or not. 
\begin{itemize}
 \item If $B = \emptyset$, $\theta(P)$ is the path 
\[
Pr \cdot D \cdot U \cdot W \cdot A \cdot D \cdot C \cdot D \cdot S,
\textrm{ see Figure \ref{fig:theta}(b). } 
\]
 \item If $B \neq \emptyset$, $\theta(P)$ is the path 
\[
Pr \cdot A \cdot D \cdot U \cdot W \cdot B \cdot D \cdot C \cdot D \cdot S,
\textrm{ see Figure \ref{fig:theta}(c). } 
\] 
\end{itemize}
In both cases, the considered valley of $P$ has been replaced in $\theta(P)$ by the one 
inside the created $DUW$ factor, which is at height $k-1$.
We set its mark to be $h-1$. 
All other valleys keep their marks unchanged.

\begin{figure}[ht]
\begin{center}
\IfFileExists{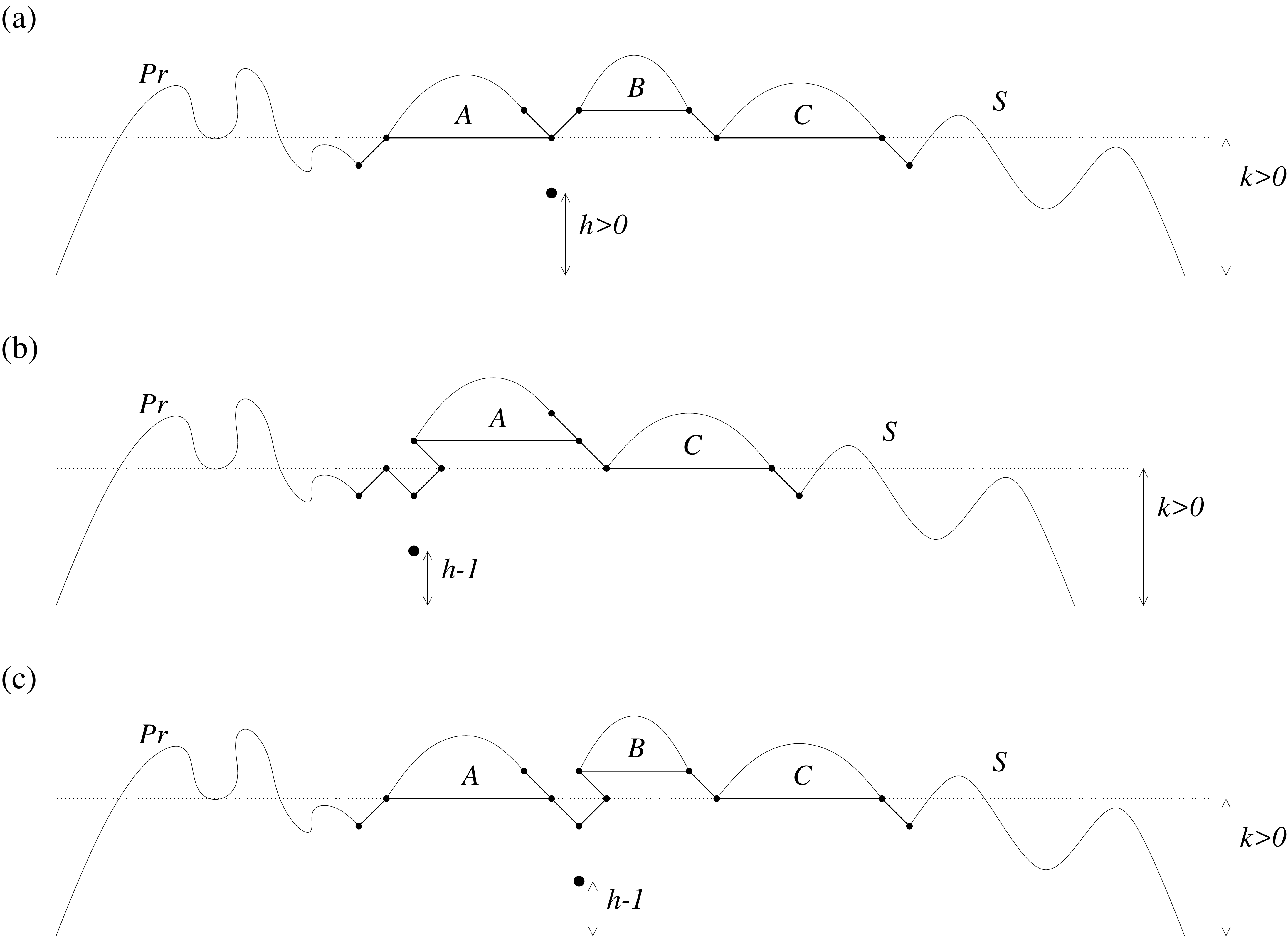}{\includegraphics[width=.8\textwidth]{fig_theta.pdf}}{MISSING FILE}
\end{center}
\caption{(a) A valley-marked steady path with non-zero total mark;
(b) its image by $\theta$ in the case $B$ is empty; 
(c) its image by $\theta$ in the case $B$ is not empty.}
\label{fig:theta}
\end{figure}

\begin{lemma}
\label{lem:transformation_theta}
Let $P$ be a valley-marked steady path of total mark at least one, 
with total mark $m$, $k$ $W$ steps, $r$ returns to the mark, and $d$ steps on the main diagonal. 
It holds that
\begin{itemize}
 \item[i)] $\theta(P)$ is a valley-marked steady path. 
 \item[ii)] The total mark of $\theta(P)$ is $m-1$.
 \item[iii)] The number of $W$ steps in $\theta(P)$ is $k+1$. 
 \item[iv)] $\theta(P)$ has $d$ steps on the main diagonal. 
 \item[v)] $\theta(P)$ has $r$ returns to the mark. 
 \item[vi)] The $W$ step that has been added to $P$ is the rightmost among the bottommost $W$ steps of $\theta(P)$. 
\end{itemize}
\end{lemma}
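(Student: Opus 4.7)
The plan is to prove Lemma~\ref{lem:transformation_theta} by mirroring the proof of Lemma~\ref{lem:transformation_phi}, exploiting the fact that $\theta$ is designed to be the two-sided inverse of $\phi$. Items ii), iii), and v) follow directly from the construction: exactly one $W$ step is inserted (iii), the selected valley of height $k$ and mark $h$ is replaced by a valley of height $k-1$ and mark $h-1$ while every other valley and its mark is left unchanged (ii), and the height-minus-mark of the displaced valley is preserved, so returns to the mark are preserved (v).

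For item iv) I will argue locally. In both subcases of the construction ($B=\emptyset$ and $B\neq\emptyset$), the only up-steps whose starting points differ between $P$ and $\theta(P)$ are $U_v$ (the ascent of the selected valley in $P$, which is removed) and the $U$ step inside the freshly inserted $DUW$ factor. Since the selected valley sits at height $k\geq 1$ and $A$ is nonempty with at least one ascent, a short computation on $x$-coordinates shows that $x_1\geq x_0+2$, where $(x_0,k)$ and $(x_1,k)$ are respectively the starting and ending points of $A$; this forces both $U_v$ (in $P$) and the inserted $U$ (in $\theta(P)$) to start strictly to the right of the main diagonal. All other up-steps are either fixed or shifted uniformly by $(+1,+1)$, hence preserve their on/off-diagonal status, so the count is preserved.

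The heart of the proof is items i) and vi), which I intend to establish jointly with the identity $\phi(\theta(P))=P$. First I will show vi): by condition (M2) on $P$ and the choice of the selected valley as the \emph{topmost} valley with nontrivial mark, every pre-existing $W$ step of $P$ lies strictly above height $k-1$, so the inserted $W$ step is bottommost in $\theta(P)$; a case analysis using condition (M3) and the \emph{leftmost}-among-topmost clause then shows that at this new minimal height the inserted $W$ is the rightmost. Once vi) is established, applying $\phi$ to $\theta(P)$ selects precisely the inserted $DUW$ factor, and unwinding the two subcases of $\theta$ (noting that the circled valley singled out by $\phi$ is exactly the valley at height $k-1$ that $\theta$ just created) yields $\phi(\theta(P))=P$. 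Since $\phi$ maps valley-marked steady paths to valley-marked steady paths by Lemma~\ref{lem:transformation_phi}(i), the identity $\phi(\theta(P))=P$ combined with a symmetric verification that $\theta(\phi(Q))=Q$ (relying on item vi) of Lemma~\ref{lem:transformation_phi}) forces $\theta(P)$ to itself be a valley-marked steady path, establishing i).

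The main obstacle will be the precise verification of vi): excluding any $W$ step of $P$ at height $k-1$, and excluding any $W$ of $P$ at the new bottommost height situated to the right of the insertion point. This is where conditions (M2) and (M3) on $P$ must be combined with the choice of the selected valley as \emph{leftmost} among the \emph{topmost} nontrivially marked valleys; any offending $W$ step would either violate (M2) with respect to the selected valley or force the existence of a nontrivially marked valley at the same height lying further right or higher, contradicting the minimality of the choice. Once vi) is secured, the inversion identity furnishes i) immediately, and the remaining bookkeeping (item iv) above, and consistency with Lemma~\ref{lem:transformation_phi} applied to $\theta(P)$) follows without further effort.
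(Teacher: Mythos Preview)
Your arguments for items ii), iii), v) match the paper's, and your treatment of iv) is essentially the same as well (modulo a small inconsistency in phrasing: you first claim the only $U$ steps whose starting points change are $U_v$ and the inserted $U$, then say other $U$ steps may be shifted by $(+1,+1)$; only the second statement is correct in the case $B=\emptyset$, where all of $A$ moves). Your outline of vi) is also on the right track and agrees with the paper; just be careful in the case $B=\emptyset$ that any $W$ step of $P$ lying in $A$ at starting height $k$ gets shifted to starting height $k+1$ in $\theta(P)$, so the only remaining height-$k$ $W$ steps are in $Pr$ and hence lie to the left of the inserted $W$.

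The genuine gap is in your strategy for i). You propose to deduce that $\theta(P)$ is a valley-marked steady path from the identity $\phi(\theta(P))=P$ together with $\theta(\phi(Q))=Q$. This is circular. The map $\phi$ is only defined on valley-marked steady paths: its very construction uses the absence of $DW$ factors and conditions (S1)--(S2) to conclude that the rightmost-bottommost $W$ is preceded by a $DU$ factor, and uses (M2)--(M3) to guarantee that the blocks $A,B$ carry only trivial marks. So writing $\phi(\theta(P))$ already presupposes that $\theta(P)$ satisfies (S1), (S2), (M1)--(M3), which is precisely item i). The companion identity $\theta(\phi(Q))=Q$ only tells you that $\phi$ is injective and that $\theta$ restricted to the \emph{image} of $\phi$ lands back in valley-marked steady paths; it does not establish that $\phi$ is surjective onto all valley-marked steady paths of positive total mark, so you cannot conclude that an arbitrary such $P$ lies in this image.

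The paper's proof avoids this circularity by verifying (S1), (S2), (M1), (M2), (M3) for $\theta(P)$ directly. This is short: when $B\neq\emptyset$ all blocks $Pr,A,B,C,S$ keep their positions and the set of edge lines (supported by $UU$ and $WU$ factors) is unchanged; when $B=\emptyset$ the block $A$ is translated by $(+1,+1)$, i.e.\ parallel to the main diagonal, so again the edge-line constraints are preserved. Conditions (M2) and (M3) for $\theta(P)$ then follow from the choice of the selected valley as leftmost among the topmost nontrivially marked valleys of $P$, exactly as you planned to use for vi). Once i) is established directly, the inversion identities $\phi\circ\theta=\mathrm{id}$ and $\theta\circ\phi=\mathrm{id}$ become legitimate consequences of items vi) of both lemmas, rather than a substitute for i).
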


\begin{proof}
Items ii), iii) and v) are clear. Item iv) is also rather easy. 
In particular, if $B$ is not empty, it follows because  all steps of $P$ and $\theta(P)$ are in the exact same places and the $U$ step of the modified valley may never lie on the main diagonal. 
In the case where $B$ is empty, $A$ is moved one cell to the right and one cell up w.r.t. the prefix $Pr$, while $C$, $Pr$ and $S$ are not moved, 
making sure that $P$ and $\theta(P)$ have the same number of steps on the main diagonal. 

As in the proof of Lemma~\ref{lem:transformation_phi}, the main part of the proof is to show i) and vi). 

In the case where $B$ is not empty, conditions (S1) and (S2) are clearly preserved 
(since the $U$ steps supporting the lines not to be crossed are the same, and $Pr$, $A$, $B$, $C$ and $S$ all stay at the same place). 
In the case where $B$ is empty, the lines not to be crossed also remain the same. 
Moreover, $Pr$, $C$ and $S$ stay at the same place
while $A$ is moved, but parallelly to the main diagonal. So, (S1) and (S2) stay satisfied. 

Condition (M1) obviously stays satisfied, since $\theta$ modifies the valleys only by moving one of them one level down, together with its mark. 
Since the chosen valley of $P$ is leftmost among the topmost valleys with nontrivial marks, 
it holds that the valleys of $A$ and $B$ (if any) all have trivial marks, 
and that the only valleys of $C$ with nontrivial marks (if any) are at ``ground level'' for $C$, \emph{i.e.}, at height $k$. 
This ensures that $\theta(P)$ satisfies (M2) and (M3) (in both cases $B=\emptyset$ and $B \neq \emptyset$). 

We are just left with the proof of vi). 
Recall that the chosen valley of $P$ is at height $k$ and has a nontrivial mark. 
By conditions (M2) and (M3), this implies that all $W$ steps of $P$ go from height $k$ to $k+1$ or are higher. 
Moreover, if $P$ has a $W$ step from height $k$ to $k+1$, it has to be in $Pr$ or $A$. 
This easily ensures vi).  
\end{proof}

\subsection{Proof of Theorem~\ref{thm:bijection}}

Let us denote by $\phi^*$ (resp. $\theta^*$) the transformation that takes a steady path (resp. 
marked-valley Dyck path) and iteratively applies $\phi$ (resp. $\theta$) to it as long as the path has some $W$ step 
(resp. positive total mark). To complete the proof of Theorem~\ref{thm:bijection}, 
we just have to show the following: 

\begin{thm}
$\phi^*$ is a size-preserving bijection between steady paths and marked-valley Dyck paths, 
whose inverse is $\theta^*$. 
Moreover, $\phi^*$ sends the number of $W$ steps to total mark, 
preserves the number of steps on the main diagonal, 
and sends the number of returns to the $x$-axis to the number of returns to the mark. 
\end{thm}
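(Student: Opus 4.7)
The plan is to establish the theorem by induction on the number of $W$ steps (for $\phi^*$) and on the total mark (for $\theta^*$), leveraging Lemmas~\ref{lem:transformation_phi} and~\ref{lem:transformation_theta} as single-step guarantees. First I observe that a steady path is exactly a valley-marked steady path in which all valleys carry trivial marks (hence total mark $0$), while a valley-marked Dyck path is exactly a valley-marked steady path with no $W$ steps. Since each application of $\phi$ strictly decreases the number of $W$ steps by one while keeping the result in the valley-marked steady path family (items i) and iii) of Lemma~\ref{lem:transformation_phi}), the iteration $\phi^*$ terminates on a path with $k$ initial $W$ steps after exactly $k$ applications, producing a valley-marked steady path with no $W$ steps, \emph{i.e.} a valley-marked Dyck path. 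By the symmetric argument using Lemma~\ref{lem:transformation_theta}, $\theta^*$ terminates on any valley-marked steady path of total mark $m$ after exactly $m$ applications of $\theta$, landing on a valley-marked steady path of total mark $0$, \emph{i.e.} a steady path (identifying trivial marks with no marks).

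Next, the core step is to show that $\phi$ and $\theta$ are mutual inverses on valley-marked steady paths with at least one $W$ step (resp., total mark at least $1$). This is where items vi) of the two lemmas play the decisive role. When $\phi$ operates on $P$, it selects the rightmost bottommost $W$ step together with its preceding (``circled'') valley, and by vi) of Lemma~\ref{lem:transformation_phi}, the marked valley it creates in $\phi(P)$ is precisely the leftmost among the topmost valleys of $\phi(P)$ with a nontrivial mark — but that is exactly the valley on which $\theta$ acts when applied to $\phi(P)$. A direct inspection of the four subcases (matching $A=\emptyset$ vs.\ $A\neq\emptyset$ in the decomposition of $\phi$ with $B=\emptyset$ vs.\ $B\neq\emptyset$ in that of $\theta$) confirms that the pieces $Pr, A, B, C, S$ line up correctly, so $\theta(\phi(P)) = P$. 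The symmetric check, using item vi) of Lemma~\ref{lem:transformation_theta} to see that the $W$ step created by $\theta$ is indeed the one that $\phi$ would select next, yields $\phi(\theta(P)) = P$.

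Having $\phi$ and $\theta$ mutually inverse at the single-step level, it follows immediately that $\phi^*$ and $\theta^*$ are mutually inverse, hence that $\phi^*$ is a bijection between steady paths and valley-marked Dyck paths. The statistical claims then follow by iterating the single-step invariants: a direct check shows that $\phi$ preserves the number of $U$ steps and hence the size; items iv) and v) of the two lemmas respectively give preservation of diagonal steps and of returns to the mark (which, for a steady path with only trivial marks, coincide with returns to the $x$-axis); and items ii)--iii) show that each application of $\phi$ converts one $W$ step into one additional unit of total mark, so that the total mark of $\phi^*(P)$ equals the original number of $W$ steps of $P$. The main obstacle in executing this plan is not the induction itself but the careful verification that the decompositions used by $\phi$ and $\theta$ match up across the four subcases; once items vi) are correctly exploited, this matching turns into a clean identification of inverses and the rest of the argument is routine bookkeeping.
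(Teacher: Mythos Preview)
Your proposal is correct and follows essentially the same approach as the paper: both arguments hinge on showing that $\phi$ and $\theta$ are mutual inverses at the single-step level via items vi) of Lemmas~\ref{lem:transformation_phi} and~\ref{lem:transformation_theta}, then deduce that $\phi^*$ and $\theta^*$ are inverse bijections, with the statistical claims following from the remaining items of those lemmas. Your write-up is somewhat more explicit about termination and the case matching (note, incidentally, that the cases $A=\emptyset$ versus $A\neq\emptyset$ in $\phi$ correspond one-to-one to $B=\emptyset$ versus $B\neq\emptyset$ in $\theta$, so only two subcases actually arise rather than four), but the substance is the same.
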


\begin{proof}
That $\phi^*$ applied to a steady path produces a marked-valley Dyck path follows immediately 
from the remark that a marked-valley Dyck path is just a marked-valley steady path with no $W$ step. 
Similarly, $\theta^*$ applied to a marked-valley Dyck path produces a steady path. 

To prove that $\phi^*$ is a bijection, we simply note that $\theta^*$ is its inverse. 
This is an immediate consequence of the fact that $\phi$ and $\theta$ are inverse of each other. 
This last claim follows by construction and items vi) of Lemmas~\ref{lem:transformation_phi} and~\ref{lem:transformation_theta}. 

From Lemma~\ref{lem:transformation_phi}, $\phi$ (and hence $\phi^*$) preserves the statistics 
``total mark + number of $W$ steps''. This implies that $\phi^*$ sends the number of $W$ steps to total mark. 
It follows similarly from Lemma~\ref{lem:transformation_phi} that $\phi^*$ preserves the number of steps on the main diagonal, 
and sends the number of returns to the $x$-axis to the number of returns to the mark. 
\end{proof}

A noticeable and nice property of the bijection $\phi^*$ is that it is the identity on the set of Dyck paths 
(interpreted either as valley-marked Dyck paths with only trivial marks, or as steady paths with no $W$ step). 

\subsection*{Acknowledgments}
We would like to thank Carla Savage and an anonymous referee for their comments which brought to our attention 
the papers~\cite{LinKernel,Yan} and the updated version of~\cite{savage}. 

We also express our gratitude to the editors of TCS, who made it possible to include new material 
(all of Section~\ref{sec:bijection}) and an additional author during the revision of this paper.

The first author was supported by the Australian Research Council grant DE170100186.

\end{document}